\setlist{labelindent=1pt,itemsep=.5em}
\setlist[itemize]{leftmargin=1.2cm}
\setlist[enumerate]{itemindent=0em,leftmargin=1.2cm}
\setlist[enumerate,1]{label={\upshape(\roman*)}}
\newcommand{\subjclass}[2][2020]{%
  \let\@oldtitle\@title%
  \gdef\@title{\@oldtitle\footnotetext{#1 \emph{Mathematics subject classification}: #2}}%
}
\newcommand{\keywords}[1]{%
  \let\@@oldtitle\@title%
  \gdef\@title{\@@oldtitle\footnotetext{\emph{Keywords}: #1}}%
}
\newtheorem{thm}{Theorem}[section]
\newtheorem{prop}{Proposition}[section]
\newtheorem{lem}{Lemma}[section]
\newtheorem{rem}{Remark}[section]
\newtheorem{cor}{Corollary}[section]
\newtheorem{defn}{Definition}[section]
\newtheorem{ex}{Example}[section]
\numberwithin{equation}{section}
\date{}
\title{Kupershmidt operators on Hom-Malcev algebras and their deformation }
\author{Fattoum Harrathi$^{1}$, Sami Mabrouk$^{2}$\footnote{{\it Corresponding author}: Sami Mabrouk, mabrouksami00@yahoo.fr}, Othmen Ncib$^{2}$, Sergei Silvestrov$^{3}$\\
\normalsize{
$^{1}$University of Sfax, Faculty of Sciences Sfax,  BP 1171, 3038 Sfax, Tunisia. \authorcr
harrathifattoum285@gmail.com   \authorcr \vspace{0,2cm}
$^{2}$University of Gafsa, Faculty of Sciences Gafsa, 2112 Gafsa, Tunisia.\authorcr
mabrouksami00@yahoo.fr, othmen.ncib@fsgf.u-gafsa.tn \authorcr \vspace{0,2cm}
$^3$M\"{a}lardalen University,
Division of Mathematics and Physics, \authorcr
School of Education, Culture and Communication, \authorcr
Box 883, 72123 V\"{a}ster{\aa}s, Sweden. \authorcr
sergei.silvestrov@mdu.se}}
\begin{document}
\maketitle

\abstract{The main feature of Hom-algebras is that the identities defining
the structures are twisted by linear maps. The purpose of this paper is to introduce and study a Hom-type generalization of
pre-Malcev algebras, called Hom-pre-Malcev algebras. We also introduce the notion of Kupershmidt operators
of Hom-Malcev and Hom-pre-Malcev algebras and
show the connections between Hom-Malcev and Hom-pre-Malcev  algebras using Kupershmidt operators. Hom-pre-Malcev algebras generalize Hom-pre-Lie algebras to the Hom-alternative setting and fit into a bigger
framework with a close relationship with Hom-pre-alternative algebras. Finally, we establish a deformation theory of Kupershmidt operators on a Hom-Malcev algebra in consistence
with the general principles of deformation theories and introduce the notion of
Nijenhuis elements. \\*[0,3cm]
\noindent\textbf{Keywords}: Hom-Malcev algebra, Hom-pre-Malcev algebra, Kupershmidt operator, deformation \\
\noindent\textbf{2020 MSC}: 17D30, 17B61, 17D10, 17A01, 17A30, 17B10}


\newpage

\tableofcontents

\numberwithin{equation}{section}
\section{Introduction}~~

Malcev algebras were introduced in 1955 by A. I. Malcev \cite{Malcev55:anltcloops}, who called these objects Moufang-Lie algebras because of the connection with analytic Moufang loops. A Malcev algebra is a non-associative algebra $A$ with an anti-symmetric multiplication $[-,-]$ that satisfies, for all $x,y,z \in A$, the Malcev identity
\begin{equation}
\label{malcev}
J(x,y,[x,z]) = [J(x,y,z),x],
\end{equation}
where $J(x,y,z) = [[x,y],z] + [[z,x],y] + [[y,z],x]$ is the Jacobian.  In particular, Lie algebras are examples of Malcev algebras. Malcev algebras play an important role in Physics and the geometry of smooth loops. Just as the tangent
algebra of a Lie group is a Lie algebra, the tangent algebra of a locally analytic Moufang
loop is a Malcev algebra \cite{kerdman,kuzmin68:Malcevalgrepr,kuzmin71:conectMalcevanMoufangloops,Malcev55:anltcloops,nagy,sabinin}, see also \cite{gt,myung,okubo} for discussions about connections with physics.
The notion of pre-Malcev algebra as a Malcev algebraic analogue of a pre-Lie algebra was introduced in \cite{Madariaga}.
A pre-Malcev algebra is a vector space $A$ with a bilinear
multiplication $''\cdot'' $ such that the product $[x,y] = x\cdot y-y \cdot x$ endows
$A$ with the structure of a Malcev algebra, and the left multiplication operator
$L_{\cdot}(x): y \mapsto x \cdot y$ define a representation of this Malcev algebra on $A$.
In other words, the product  $x \cdot y$ satisfies  the following identities for all $x, y, z, t \in A$:
\begin{equation}\label{PM}
[y,z] \cdot (x \cdot t)+ [[x,y],z] \cdot t+ y \cdot ([x,z] \cdot t)- x \cdot (y \cdot (z \cdot t)) + z \cdot (x \cdot (y \cdot t)).
\end{equation}

The theory of Hom-algebras has been initiated in \cite{HartwigLarSil:defLiesigmaderiv, LarssonSilvJA2005:QuasiHomLieCentExt2cocyid,LarssonSilv:quasiLiealg} from Hom-Lie algebras, quasi-Hom-Lie algebras and quasi-Lie algebras, motivated by quasi-deformations of Lie algebras of vector fields, in particular q-deformations of Witt and Virasoro algebras. Hom-Lie algebras and more general quasi-Hom-Lie algebras were introduced first by Hartwig, Larsson and Silvestrov in  \cite{HartwigLarSil:defLiesigmaderiv} where a general approach to discretization of Lie algebras of vector fields using general twisted derivations ($\sigma$-deriva\-tions) and a general method for construction of deformations of Witt and Virasoro type algebras based on twisted derivations have been developed. The general quasi-Lie algebras, containing the quasi-Hom-Lie algebras and Hom-Lie algebras as subclasses, as well their graded color generalization, the color quasi-Lie algebras including color quasi-hom-Lie algebras, color hom-Lie algebras and their special subclasses the quasi-Hom-Lie superalgebras and hom-Lie superalgebras, have been first introduced in \cite{HartwigLarSil:defLiesigmaderiv,LarssonSilvJA2005:QuasiHomLieCentExt2cocyid,LarssonSilv:quasiLiealg,LSGradedquasiLiealg,LarssonSilv:quasidefsl2,SigSilv:CzechJP2006:GradedquasiLiealgWitt}.
Subsequently, various classes of Hom-Lie admissible algebras have been considered in \cite{ms:homstructure}. In particular, in \cite{ms:homstructure}, the Hom-associative algebras have been introduced and shown to be Hom-Lie admissible, that is leading to Hom-Lie algebras using commutator map as new product, and in this sense constituting a natural generalization of associative algebras as Lie admissible algebras leading to Lie algebras using commutator map. Furthermore, in \cite{ms:homstructure}, more general $G$-Hom-associative algebras including Hom-associative algebras, Hom-Vinberg algebras (Hom-left symmetric algebras), Hom-pre-Lie algebras (Hom-right symmetric algebras), and some other Hom-algebra structures, generalizing $G$-associative algebras, Vinberg and pre-Lie algebras respectively, have been introduced and shown to be Hom-Lie admissible, meaning that for these classes of Hom-algebras, the operation of taking commutator leads to Hom-Lie algebras as well. Diagram \eqref{diagramhomliealg} illustrates the relations existing between some of these
structures. (Note that Rota-Baxter operators can be replaced by the more general
Kupershmidt operators in the upper and lower rows.)
 \begin{equation}\label{diagramhomliealg}
\begin{split}
\resizebox{13cm}{!}{\xymatrix{
\ar[rr] \mbox{\bf Hom-dendriform alg $(A,\prec,\succ,\alpha)$ }\ar[d]_{\mbox{ $\ast=\prec+\succ$}}\ar[rr]^{\mbox{\quad\quad $\cdot=\prec-\succ^{op}$\quad}}
                && \mbox{\bf Hom-pre-Lie alg $(A,\cdot ,\alpha)$ }\ar[d]_{\mbox{ Commutator}}\\
\ar[rr] \mbox{\bf Hom-associative alg $(A,\ast,\alpha)$}\ar@<-1ex>[u]_{\mbox{ R-B }}\ar[rr]^{\mbox{\quad\quad Commutator\quad}}
                && \mbox{\bf Hom-Lie alg  $(A,[-,-],\alpha)$}\ar@<-1ex>[u]_{\mbox{ R-B}}}
}\end{split}
\end{equation}

Also, Hom-flexible algebras have been introduced, connections to Hom-algebra generalizations of derivations and of adjoint maps have been noticed, and some low-dimensional Hom-Lie algebras have been described.
In Hom-algebra structures, defining algebra identities are twisted by linear maps.
Since the pioneering works
\cite{HartwigLarSil:defLiesigmaderiv,LarssonSilvJA2005:QuasiHomLieCentExt2cocyid,LarssonSilv:quasiLiealg,LSGradedquasiLiealg,LarssonSilv:quasidefsl2,ms:homstructure}, Hom-algebra structures have developed in a popular broad area with increasing number of publications in various directions.
Hom-algebra structures include their classical counterparts and open new broad possibilities for deformations, extensions to Hom-algebra structures of representations, homology, cohomology and formal deformations, Hom-modules and hom-bimodules, Hom-Lie admissible Hom-coalgebras, Hom-coalgebras, Hom-bialgebras, Hom-Hopf algebras, $L$-modules, $L$-comodules and Hom-Lie quasi-bialgebras, $n$-ary generalizations of biHom-Lie algebras and biHom-associative algebras, generalized derivations, Rota-Baxter operators, Hom-dendriform color algebras, Rota-Baxter bisystems and covariant bialgebras, Rota-Baxter cosystems, coquasitriangular mixed bialgebras, coassociative Yang-Baxter pairs, coassociative Yang-Baxter equation and generalizations of Rota-Baxter systems and algebras, curved $\mathcal{O}$-operator systems and their connections with tridendriform systems and pre-Lie algebras, BiHom-algebras, BiHom-Frobenius algebras and double constructions, infinitesimal biHom-bialgebras and Hom-dendriform $D$-bialgebras, and category theory of Hom-algebras
\cite{AmmarEjbehiMakhlouf:homdeformation,
AttanLaraiedh:2020ConstrBihomalternBihomJordan,
Bakayoko:LaplacehomLiequasibialg,
Bakayoko:LmodcomodhomLiequasibialg,
BakBan:bimodrotbaxt,
BakyokoSilvestrov:HomleftsymHomdendicolorYauTwi,
BakyokoSilvestrov:MultiplicnHomLiecoloralg,
BenMakh:Hombiliform,
BenAbdeljElhamdKaygorMakhl201920GenDernBiHomLiealg,
CaenGoyv:MonHomHopf,
DassoundoSilvestrov2021:NearlyHomass,
GrMakMenPan:Bihom1,
HassanzadehShapiroSutlu:CyclichomolHomasal,
HounkonnouDassoundo:centersymalgbialg,
HounkonnouDassoundo:homcensymalgbialg,
HounkonnouHoundedjiSilvestrov:DoubleconstrbiHomFrobalg,
kms:narygenBiHomLieBiHomassalgebras2020,
Laraiedh1:2021:BimodmtchdprsBihomprepois,
LarssonSilvJA2005:QuasiHomLieCentExt2cocyid,
LarssonSilv:quasidefsl2,
LarssonSigSilvJGLTA2008:QuasiLiedefFttN,
LarssonSilvestrovGLTMPBSpr2009:GenNComplTwistDer,
MaMakhSil:CurvedOoperatorSyst,
MaMakhSil:RotaBaxbisyscovbialg,
MaMakhSil:RotaBaxCosyCoquasitriMixBial,
MaZheng:RotaBaxtMonoidalHomAlg,
MabroukNcibSilvestrov2020:GenDerRotaBaxterOpsnaryHomNambuSuperalgs,
Makhlouf2010:ParadigmnonassHomalgHomsuper,
MakhSil:HomHopf,
MakhSilv:HomDeform,
MakhSilv:HomAlgHomCoalg,
MakYau:RotaBaxterHomLieadmis,
RichardSilvestrovJA2008,
RichardSilvestrovGLTbnd2009,
SaadaouSilvestrov:lmgderivationsBiHomLiealgebras,
ShengBai:homLiebialg,
Sheng:homrep,
SilvestrovParadigmQLieQhomLie2007,
SigSilv:GLTbdSpringer2009,
SilvestrovZardeh2021:HNNextinvolmultHomLiealg,
Yau:ModuleHomalg,
Yau:HomEnv,
Yau:HomHom,
Yau:HombialgcomoduleHomalg,
Yau:HomYangBaHomLiequasitribial}.

A Hom-type generalization of Malcev algebras, called Hom-Malcev
algebras, is defined in \cite{Yau}, where connections between Hom-alternative algebras and Hom-Malcev algebras are given. We aim in this paper to introduce and study, through Rota-Baxter operators and Kupershmidt operators, the relationship between Hom-Malcev and Hom-pre-Malcev algebras generalizing, then, Malcev  algebras and pre-Malcev algebras.
The anti-commutator of a Hom-pre-Malcev algebra is a Hom-Malcev algebra and the left multiplication operators give a representation of this Hom-Malcev algebra, which is the beauty of such a structure.

The deformation of algebraic structures began with the seminal work of Gerstenhaber \cite{Gerstenhaber1, Gerstenhaber2, Gerstenhaber3, Gerstenhaber4}
 for associative
algebras and followed by its extension to Lie algebras by
Nijenhuis and Richardson~\cite{NR,NR2}. Deformations of other
algebraic structures such as pre-Lie and Malcev algebras have also been
developed in ~\cite{Bu0} and \cite{Nijenhuis} respectively. In general, deformation theory was developed
for binary quadratic operads by Balavoine~\cite{Bal}. For more
general operads we refer the reader to  \cite{KSo, LV, Fo}.
Nijenhuis operators also play an important role in  deformation
theories  due to their relationship with trivial infinitesimal
deformations. There are interesting applications of Nijenhuis operators such as constructing biHamiltonian systems to study the integrability of nonlinear evolution equations \cite{CGM,Do}.

In this paper we use Kupershmidt operators to split operations, although a generalization exists in the alternative setting in terms of bimodules: the Rota-Baxter operators
defined by Madariaga \cite{Madariaga}. Diagram \eqref{Diagramme1} summarizes the results of the present work.

In Section \ref{sec: prelimenaires}, we  summarize the definitions and some key constructions of  Hom-alternative algebras and Hom-Malcev algebras, we introduce the notion of Kupershmidt operators  of Hom-Malcev algebras that generalizes the notion of Rota-Baxter operators and we develop (dual) representation of a Hom-Malcev algebras calles also $\mathcal{O}$-operators. In Section \ref{sec:Hom-pre-Malcev  algebras}, we introduce the notion of Hom-pre-Malcev algebra, provide  some properties and define the notion of a bimodule of a Hom-pre-Malcev algebra. Moreover, we develop  some constructions theorems. We show that on one hand, a Kupershmidt operators  on a Hom-Malcev algebra gives a Hom-pre-Malcev algebra. On the other hand, a Hom-pre-Malcev algebra naturally gives a Kupershmidt operator on the sub-adjacent Hom-Malcev algebra. In Section \ref{sec:infdef}, we study deformations of Kuperchmidt operators and introduce the notion of Nijenhuis elements.

In this paper, all vector spaces are over a field $\mathbb{K}$ of characteristic $0$.
\section{Preliminaries and basics}\label{sec: prelimenaires}

 The purpose of this section is to recall some basics about Hom-alternative  algebras introduced in \cite{Makhl:HomaltHomJord, Yau}. The  definition of Hom-Malcev  algebras can  be viewed as a Hom-alternative algebra via the commutator bracket (see \cite{Yau}). We introduce  the notion of  representation of Hom-Malcev algebra. Also, we construct the dual representation of a representation of a Hom-Malcev algebra
without any additional condition. This is nontrivial. Finally, we introduce the notion of Kupershmidt operator on a Hom-Malcev algebra which is a generalisation of Rota-Baxter operators.


A Hom-algebra is a triple $(A, \mu, \alpha)$ in which $A$ is a vector space, $\mu: A^{\otimes2}\longrightarrow A$ and $\alpha: A\longrightarrow  A$ are two linear maps. Hom-algebra is said to be multiplicative if $\alpha\circ \mu = \mu \circ \alpha^{\otimes 2}.$ We denote be $End(A)$ the space of all linear map $f:A\to A$.

 \begin{defn}[\cite{Makhl:HomaltHomJord}]
A Hom-alternative algebra is a Hom-algebra $(A, \ast, \alpha)$ satisfying
for all $x, y, z \in A$,
\begin{equation}
\label{homalt}
as_{\alpha}(x,x,y)= as_{\alpha}(y, x, x)= 0,
\end{equation}
where $as_{\alpha}(x, y, z) = (x\ast y)\ast\alpha(z) -\alpha(x)\ast (y\ast z)$ is  the Hom-associator \textup{(}$\alpha$-associator\textup{)}.
\end{defn}

 \begin{defn}[\cite{Yau}]
A Hom-Malcev algebra is a Hom-algebra $(A, [-, -], \alpha)$ such that $[-, -]$ is anti-symmetric, and satisfies the Hom-Malcev identity for all $x, y, z \in A$,
\begin{equation}
\label{Hom-Malcev:Jacobiannotation}
J_{\alpha}(\alpha (x), \alpha (y), [x,z]) = [J_{\alpha}(x, y, z), \alpha^{2}(x)],
\end{equation}
where $J_{\alpha}(x, y, z)=[[x,y],\alpha (z)]+[[y,z],\alpha (x)]+[[z,x],\alpha (y)]$ is the Hom-Jacobian of $x, y, z$ \textup{(}$\alpha$-Jacobian\textup{)}.
\end{defn}

The Hom-Malcev identity \eqref{Hom-Malcev:Jacobiannotation} is equivalent to
\begin{equation}
\label{Hom-Malcev}
\begin{array}{ll}
[\alpha ([x, z]), \alpha ([y, t])] &= [[[x, y], \alpha (z)], \alpha^{2}(t)] + [[[y, z], \alpha (t)], \alpha^{2} (x)]\\
& \quad + [[[z, t], \alpha (x)], \alpha^{2} (y)] + [[[t, x], \alpha (y)], \alpha^{2} (z)].
\end{array}
\end{equation}
When $\alpha = Id_A$, we recover the Malcev algebra (see \cite{Malcev55:anltcloops}).

A Hom-Malcev algebra $(A, [- , -], \alpha)$ is said to be {\bf regular}, if $\alpha$ is inversible.  $(A, [- , -], \alpha)$ is called {\bf multiplicative} if
$\alpha([x, y]) = [\alpha(x), \alpha(y)].$

Now we introduce the notion of Hom-Malcev admissible algebra and generalize the well-known fact that alternative
algebras are Malcev admissible algebras
\begin{defn}
  A Hom-algebra $(A, [-, -], \alpha)$  is said to be a Hom-Malcev admissible algebra if, for any
elements $x, y \in A$, the bracket $[-,-] : A \times A \to A$ defined by
$$[x, y] = x \ast y - y \ast x$$
satisfies the Hom-Malcev identity.
\end{defn}

It is proved in \cite{ms:homstructure} that, every Hom-associative algebra $A$ is  Hom-Lie admissible  algebra.  Also, any alternative algebra is a Malcev admissible algebra. The following theorem shows that
Hom-alternative algebras are related to Hom-Malcev algebras via admissibility \cite{Yau}.

\begin{thm} Any Hom-alternative algebra $(A, \ast, \alpha)$ is a Hom-Malcev  admissible algebra. That is  $(A, [- , -], \alpha)$ is a Hom-Malcev algebra where $[x, y] = x\ast y - y\ast x$ for all $x, y \in A$.
\end{thm}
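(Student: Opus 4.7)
The plan is to reduce the Hom-Malcev identity to an identity among Hom-associators, and then exploit the fact that on a Hom-alternative algebra the associator $as_\alpha$ is alternating. Since $as_\alpha$ is trilinear, polarizing the two defining identities $as_\alpha(x,x,y)=0$ and $as_\alpha(y,x,x)=0$ (replace $x$ by $x+x'$, respectively $x$ by $x+z$) yields
\[
as_\alpha(x,y,z)+as_\alpha(y,x,z)=0,\qquad as_\alpha(x,y,z)+as_\alpha(x,z,y)=0,
\]
so $as_\alpha$ is an alternating trilinear map.

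Next, I expand the Hom-Jacobian of the commutator bracket $[x,y]=x\ast y - y\ast x$. Each of the three terms in
\[
J_\alpha(x,y,z)=[[x,y],\alpha(z)]+[[y,z],\alpha(x)]+[[z,x],\alpha(y)]
\]
splits into four products of type $(a\ast b)\ast\alpha(c)$ and $\alpha(a)\ast(b\ast c)$. The twelve resulting terms can be regrouped, following the classical pattern, into six Hom-associators: one each of the form $\pm\, as_\alpha(\sigma x,\sigma y,\sigma z)$ as $\sigma$ ranges over the symmetric group $S_3$. Applying the alternating property from the first step collapses all six into the same expression and gives the key identity
\[
J_\alpha(x,y,z)=6\,as_\alpha(x,y,z).
\]

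Substituting this into both sides of the target identity \eqref{Hom-Malcev:Jacobiannotation}, and invoking multiplicativity $\alpha([x,z])=[\alpha(x),\alpha(z)]$, the Hom-Malcev identity becomes
\[
as_\alpha(\alpha(x),\alpha(y),[x,z])=[as_\alpha(x,y,z),\alpha^{2}(x)],
\]
which is the Hom-analogue of the Moufang-type identity linking the associator with the right multiplication by $x$. This is verified by expanding both sides in $\ast$-products, matching terms through the alternating property of $as_\alpha$, and applying the defining identities $as_\alpha(x,x,\cdot)=0=as_\alpha(\cdot,x,x)$ to the subterms that contain the repeated argument $x$. The main obstacle is precisely this last step: the $\alpha^{2}$-twists on the right, combined with the nested $\alpha$-twists inside $[x,z]$ on the left, have to be aligned term-by-term, and multiplicativity of $\alpha$ is essential to force the remaining products to cancel in pairs. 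Once this Moufang-type identity is established, the theorem follows immediately.
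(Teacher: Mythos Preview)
The paper does not give its own proof of this theorem: it simply cites Yau \cite{Yau}, so there is no in-paper argument to compare against. Your reduction in the first two steps is correct and efficient --- polarization shows $as_\alpha$ is alternating, and the regrouping of the twelve $\ast$-products into six signed Hom-associators yields $J_\alpha(x,y,z)=6\,as_\alpha(x,y,z)$ exactly as you claim (this uses characteristic $\ne 2,3$, which the paper's standing hypothesis of characteristic $0$ guarantees).

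The gap is entirely in your final step. The identity
\[
as_\alpha(\alpha(x),\alpha(y),[x,z])=[as_\alpha(x,y,z),\alpha^{2}(x)]
\]
does \emph{not} follow from the alternating property of $as_\alpha$ and a direct term-by-term expansion. Both sides are degree-four expressions in $\ast$; when you expand, the terms do not simply pair off and cancel using $as_\alpha(x,x,\cdot)=0$ and $as_\alpha(\cdot,x,x)=0$, because the repeated variable $x$ sits at different bracketing depths on the two sides. What is actually needed here is a Hom-analogue of the Moufang identities (e.g.\ $\alpha(x)\ast(y\ast(x\ast z))=((x\ast y)\ast x)\ast \alpha(z)$ and its companions), and these require separate proofs from the Hom-alternative axioms; in Yau's paper this is done through a chain of lemmas on the Hom-Bruck--Kleinfeld function and Hom-Moufang identities before the Hom-Malcev admissibility is established. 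Your phrase ``verified by expanding both sides \dots\ matching terms through the alternating property'' glosses over precisely the nontrivial content of the theorem. You should either prove the needed Hom-Moufang identities explicitly or cite them from \cite{Yau}. Note also that you invoke multiplicativity of $\alpha$; this hypothesis is indeed required (and is Yau's standing assumption), so you should state it, even though the theorem as quoted here omits it.
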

 Let  $(A, [-, -], \alpha)$ and  $(A', [-, -]', \alpha')$ be two Hom-Malcev algebras. A
linear map $f : A\to A' $ is said to be a morphism of Hom-Malcev algebras
if for all $ x, y\in A,$
$$[f(x)\, f(y)]' = f([x, y])~~\text{and}~~ f \circ \alpha = \alpha'\circ f.$$
\begin{defn}
Let $(A,[-, -], \alpha)$ be a Hom-Malcev algebra and let $V$ be a vector space. Then a linear map $\varrho : A \longrightarrow End(V )$ is a  representation of $(A,[-, -], \alpha)$ on $V$ with respect to $\beta \in
End(V )$ if for any $x, y, z \in A,$
\begin{eqnarray} \label{rephommalcev}
\varrho(\alpha(x))\beta &=&  \beta\varrho(x),\\
\varrho([[x, y], \alpha (z)])\beta^{2} & =&
\varrho(\alpha^{2}(x))\varrho(\alpha (y))\varrho(z)-  \varrho(\alpha^{2}(z))\varrho(\alpha (x))\varrho(y)  \nonumber \\
 & & +  \varrho(\alpha^{2}(y))\varrho([z,x]) \beta -\varrho(\alpha ([y, z]))\varrho(\alpha (x))\beta.
\label{representation H-M}
\end{eqnarray}
\end{defn}

\begin{prop} \label{semidirectprduct HomMalcev}
Let $(A,[-, -], \alpha)$ be a Hom-Malcev algebra, $(V, \beta)$ be a vector space,  and $\varrho : A \longrightarrow End(V )$ be a linear map. Then $(V, \varrho, \beta)$ is a representation of $A$ if and only if
$(A \oplus V, [-, -]_{\varrho}, \alpha + \beta)$ is a Hom-Malcev algebra, where $[-, -]_{\varrho}$ and $\alpha + \beta$ are defined for all  $x, y \in  A,\ a, b \in V$ by
\begin{eqnarray*}
[x + a, y + b]_{\varrho} &=& [x, y] + \varrho(x)a - \varrho(y)b, \\
(\alpha + \beta)(x + a) &=& \alpha(x) + \beta(a).
\end{eqnarray*}
This Hom-Malcev algebra is called the semi-direct product of $(A,[-, -], \alpha)$ and $(V, \beta)$, and denoted by
$A \ltimes _{\varrho}^{\alpha, \beta} V$ or simply $A \ltimes V$.
\end{prop}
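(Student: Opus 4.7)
The plan is to verify the Hom-Malcev axioms on $(A \oplus V, [-,-]_\varrho, \alpha+\beta)$ term by term and read off the representation conditions as the obstructions. Anti-symmetry of $[-,-]_\varrho$ is immediate from anti-symmetry of $[-,-]$ on $A$, since the $V$-part $\varrho(x)a-\varrho(y)b$ of $[x+a,y+b]_\varrho$ is manifestly anti-symmetric under $(x+a)\leftrightarrow(y+b)$. For the compatibility $(\alpha+\beta)\circ[-,-]_\varrho = [-,-]_\varrho\circ(\alpha+\beta)^{\otimes 2}$, expanding both sides with $X=x+a$, $Y=y+b$ splits into an $A$-component coinciding with the multiplicativity of $\alpha$ on $A$, and a $V$-component equivalent to condition \eqref{rephommalcev}.

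The core step is the Hom-Malcev identity \eqref{Hom-Malcev} applied to quadruples $X_i = x_i + a_i \in A \oplus V$, $i\in\{1,2,3,4\}$. The key observation is that $[V,V]_\varrho = 0$, so in any nested bracket in $A\oplus V$ the $A$-component depends only on the $A$-parts of the inputs, while the $V$-component is a sum of terms each linear in exactly one of the $a_i$, obtained by iteratively composing the operators $\varrho(\,\cdot\,)$ and $\beta$. Substituting $X_i = x_i + a_i$ into \eqref{Hom-Malcev} therefore decomposes the identity into an $A$-component---which is precisely \eqref{Hom-Malcev} on $A$ and thus holds automatically---and a $V$-component multilinear in $(a_1,a_2,a_3,a_4)$.

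By multilinearity, this $V$-component vanishes if and only if each of its four coefficient equations (obtained by letting exactly one $a_i$ be nonzero) vanishes. The cyclic symmetry of \eqref{Hom-Malcev} under $(x,y,z,t)\mapsto(y,z,t,x)$ shows that these four equations are cyclic relabelings of one another, so it suffices to examine a single one. Matching this coefficient equation---after normalizing the positions of $\beta$ using \eqref{rephommalcev} to rewrite terms $\beta\varrho(\,\cdot\,)$ as $\varrho(\alpha(\,\cdot\,))\beta$---with \eqref{representation H-M} furnishes both directions of the equivalence simultaneously: the forward direction gives the $V$-component vanishing assuming \eqref{representation H-M}, and the reverse direction reconstructs \eqref{representation H-M} from the vanishing of the coefficient.

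The main obstacle is the bookkeeping in this last matching step: tracking the signs, $\alpha$-twists, and iterated $\varrho$-compositions in the four fully expanded nested brackets on the right-hand side of \eqref{Hom-Malcev}, carefully isolating the $V$-contribution of $[\alpha([X_1,X_3]), \alpha([X_2,X_4])]_\varrho$, and arranging everything into the form of \eqref{representation H-M}. No conceptual difficulty beyond the splitting principle and cyclic symmetry is required.
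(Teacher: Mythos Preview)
The paper states Proposition~\ref{semidirectprduct HomMalcev} without proof, so there is no argument to compare against; this semi-direct product characterization is treated as a standard fact.

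Your strategy is correct and is the standard one: exploit $[V,V]_\varrho=0$ to split each nested bracket into an $A$-component (depending only on the $x_i$) and a $V$-component that is linear in exactly one $a_i$, then use the cyclic invariance of identity~\eqref{Hom-Malcev} under $(x,y,z,t)\mapsto(y,z,t,x)$ to reduce the four coefficient equations to a single one. Your observation that the multiplicativity condition on $A\oplus V$ yields~\eqref{rephommalcev} is also correct, though note that the proposition as stated does not explicitly assume multiplicativity---this is a mild ambiguity in the paper's formulation rather than a flaw in your argument.

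Two remarks. First, the bracket formula in the proposition appears to contain a typo: as written it reads $\varrho(x)a-\varrho(y)b$, whereas the intended semi-direct product bracket (consistent with the adjoint example and with the computation in the proof of Proposition~\ref{rephompremalcev==rephommalcev}) should be $\varrho(x)b-\varrho(y)a$. Both versions happen to be anti-symmetric, but only the latter specializes correctly to $ad$. Second, you explicitly defer the final bookkeeping---extracting the $V$-coefficient from the expanded identity~\eqref{Hom-Malcev} and matching it against~\eqref{representation H-M}. That step is genuinely mechanical and your description of it is accurate, but a complete proof would need to exhibit it; as written, your text is a sound proof sketch rather than a finished proof.
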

\begin{ex}
 Let $(A,[-, -], \alpha)$ be a Hom-Malcev algebra.
Then $ad_{x} : A \rightarrow End(A)$, defined for all $x, y \in A$ by
$ad_{x}(y) = [x , y],$ is a representation of $(A, [-, -], \alpha)$  on $A$  with respect to $\alpha$, called \textbf{the adjoint representation} of $A$.
\end{ex}
\begin{thm}
Let $(V, \varrho, \beta)$ be a representation over the multiplicative Hom-Malcev algebra $(A, [-,-], \alpha)$.
Then $(V, \varrho^\alpha, \beta)$ is a representation over $(A, [-,-], \alpha)$, with
$ \varrho^\alpha(x) a=\varrho(\alpha^{2}(x))(a)$.
\end{thm}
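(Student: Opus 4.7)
The plan is to verify directly that $\varrho^\alpha$ satisfies the two defining axioms \eqref{rephommalcev} and \eqref{representation H-M} of a representation with respect to $\beta$. The key tools are: the fact that $\varrho$ itself is a representation, the hypothesis that $(A,[-,-],\alpha)$ is multiplicative so that $\alpha([x,y])=[\alpha(x),\alpha(y)]$, and the obvious identity $\varrho^{\alpha}(y)=\varrho(\alpha^{2}(y))$.

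For the first axiom I would compute $\varrho^\alpha(\alpha(x))\beta=\varrho(\alpha^{2}(\alpha(x)))\beta=\varrho(\alpha(\alpha^{2}(x)))\beta$ and then apply \eqref{rephommalcev} for $\varrho$ to the element $\alpha^{2}(x)$, obtaining $\beta\varrho(\alpha^{2}(x))=\beta\varrho^\alpha(x)$. This step is essentially cosmetic.

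For the Malcev-type axiom \eqref{representation H-M} the strategy is to apply the identity \eqref{representation H-M} satisfied by $\varrho$ itself to the triple $(\alpha^{2}(x),\alpha^{2}(y),\alpha^{2}(z))$, and then rewrite the resulting expression in terms of $\varrho^\alpha$. Concretely, on the left-hand side I would use multiplicativity repeatedly to move powers of $\alpha$ inside brackets:
\begin{align*}
\varrho^\alpha([[x,y],\alpha(z)])\beta^{2}
&=\varrho(\alpha^{2}([[x,y],\alpha(z)]))\beta^{2}
=\varrho([[\alpha^{2}(x),\alpha^{2}(y)],\alpha(\alpha^{2}(z))])\beta^{2},
\end{align*}
which by \eqref{representation H-M} applied to $(\alpha^{2}(x),\alpha^{2}(y),\alpha^{2}(z))$ equals
\begin{align*}
&\varrho(\alpha^{4}(x))\varrho(\alpha^{3}(y))\varrho(\alpha^{2}(z))
 -\varrho(\alpha^{4}(z))\varrho(\alpha^{3}(x))\varrho(\alpha^{2}(y))\\
&\quad+\varrho(\alpha^{4}(y))\varrho([\alpha^{2}(z),\alpha^{2}(x)])\beta
 -\varrho(\alpha([\alpha^{2}(y),\alpha^{2}(z)]))\varrho(\alpha^{3}(x))\beta.
\end{align*}
On the right-hand side I would expand every $\varrho^\alpha$ according to its definition, converting the four terms into exactly the same four expressions after again invoking multiplicativity of $\alpha$ (for instance $\varrho^\alpha(\alpha([y,z]))=\varrho(\alpha^{3}([y,z]))=\varrho([\alpha^{3}(y),\alpha^{3}(z)])=\varrho(\alpha([\alpha^{2}(y),\alpha^{2}(z)]))$). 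A termwise comparison then yields the identity.

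The only real obstacle is bookkeeping: tracking the powers of $\alpha$ so that they match on both sides, which is where multiplicativity is used in an essential way. Without $\alpha([x,y])=[\alpha(x),\alpha(y)]$ the rewriting that turns $\alpha^{2}([[x,y],\alpha(z)])$ into $[[\alpha^{2}(x),\alpha^{2}(y)],\alpha^{3}(z)]$ would fail, which is why the hypothesis of multiplicativity is included in the statement.
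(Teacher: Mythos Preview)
Your proof is correct and follows essentially the same approach as the paper: both apply the representation identity \eqref{representation H-M} for $\varrho$ to the triple $(\alpha^{2}(x),\alpha^{2}(y),\alpha^{2}(z))$ and then use multiplicativity of $\alpha$ to rewrite each term in the form $\varrho^{\alpha}(\cdots)$. Your version is slightly more explicit in also checking the compatibility axiom \eqref{rephommalcev}, which the paper omits.
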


\begin{proof}
By using the multiplicativity of $\alpha$, and then \eqref{representation H-M} for $\alpha^2(x)$, $\alpha^2(y)$ and $\alpha^2(z)$ in $(V,\varrho, \beta)$, for any  $x, y,z \in A$ and $a\in V$, we have
\begin{align*}
 \varrho^{\alpha}([[x,y],\alpha (z)])\beta^{2}(a)  = & \varrho(\alpha^{2}([[x,y],\alpha^{3} (z)]))\beta^{2}(a)\\
  = & \varrho(\alpha^{4} (x))\varrho(\alpha^{3} (y))\varrho(\alpha^{2}(z))(a) - \varrho(\alpha^{4} (z))\varrho(\alpha^{3} (x))\varrho(\alpha^{2}(y))(a) \\
   &+\varrho(\alpha^{4}(y))\varrho(\alpha^{2}([z,x]))\beta(a)-\varrho(\alpha^{3} ([y,z])\varrho(\alpha^{3} (x))\beta(a)\\
   =& \varrho^{\alpha}(\alpha^{2} (x))\varrho^{\alpha}(\alpha (y))\varrho^{\alpha}(z)(a) -\varrho^{\alpha}(\alpha^{2} (z))\varrho^{\alpha}(\alpha (x))\varrho^{\alpha}(y)(a) \\
   &  + \varrho^{\alpha}(\alpha^{2}(y))\varrho^{\alpha}([z,x])\beta(a)
   -\varrho^{\alpha}(\alpha ([y,z])\varrho^{\alpha}(\alpha (x))\beta(a).
   \qedhere
 \end{align*}
\end{proof}
\begin{thm}
Let $(A, [-, -])$ be a Malcev algebra and $\alpha: A \longrightarrow A$ be an algebra morphism. Then
the Hom-algebra $A_{\alpha} = (A, [-, -]_{\alpha} = \alpha\circ[-, -], \alpha)$ induced by $\alpha$ is a multiplicative Hom-Malcev algebra.
Moreover, assume that $(A', [-, -]')$ is another Malcev algebra, and $\alpha': A' \to A'$ is a
Malcev algebra morphism. Let $f : A \to A'$ be a Malcev algebra
morphism satisfying $f \circ \alpha = \alpha'\circ f$. Then $f : A_{\alpha} \to A'_{\alpha'}$
 is a Hom-Malcev algebra morphism.
 \label{twistingmalcevalg}
\end{thm}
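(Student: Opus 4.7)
The plan is to use the standard Yau twist strategy: first verify the structural properties (anti-symmetry and multiplicativity), then reduce the Hom-Malcev identity to the ordinary Malcev identity by showing that the $\alpha$-Jacobian of the twisted bracket is simply $\alpha^{2}$ applied to the ordinary Jacobian of $[-,-]$.

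First I would check that $[-,-]_\alpha = \alpha \circ [-,-]$ is anti-symmetric, which is immediate from the anti-symmetry of $[-,-]$ and the linearity of $\alpha$. Multiplicativity $\alpha([x,y]_\alpha)=[\alpha(x),\alpha(y)]_\alpha$ also follows in one line from the hypothesis that $\alpha$ is a Malcev algebra morphism. These give a well-defined multiplicative Hom-algebra structure $(A,[-,-]_\alpha,\alpha)$.

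The heart of the argument is the following key identity, which I would verify by a direct computation: for all $x,y,z\in A$,
\begin{equation*}
J_\alpha^{[-,-]_\alpha}(x,y,z) \;=\; \alpha^{2}\bigl(J(x,y,z)\bigr),
\end{equation*}
where on the left we use the definition $J_\alpha(x,y,z)=[[x,y]_\alpha,\alpha(z)]_\alpha+\text{cyclic}$ with the twisted bracket, and $J$ on the right is the ordinary Jacobian of $[-,-]$. The computation reduces to noting that $[[x,y]_\alpha,\alpha(z)]_\alpha=\alpha\bigl([\alpha([x,y]),\alpha(z)]\bigr)=\alpha^{2}([[x,y],z])$, using that $\alpha$ commutes with $[-,-]$. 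Once this lemma is in hand, I can evaluate both sides of \eqref{Hom-Malcev:Jacobiannotation} for the twisted bracket: the left-hand side becomes $\alpha^{2}\bigl(J(\alpha(x),\alpha(y),\alpha([x,z]))\bigr)=\alpha^{3}\bigl(J(x,y,[x,z])\bigr)$ (pulling one $\alpha$ through $J$ since $\alpha$ is a morphism), and the right-hand side becomes $\alpha\bigl([\alpha^{2}(J(x,y,z)),\alpha^{2}(x)]\bigr)=\alpha^{3}\bigl([J(x,y,z),x]\bigr)$. Equality of the two then follows from the original Malcev identity \eqref{malcev} applied in $(A,[-,-])$.

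For the morphism part, a direct verification suffices: $f([x,y]_\alpha)=f(\alpha([x,y]))=\alpha'(f([x,y]))=\alpha'([f(x),f(y)]')=[f(x),f(y)]'_{\alpha'}$, using the intertwining hypothesis $f\circ\alpha=\alpha'\circ f$ together with $f$ being a Malcev morphism; compatibility with the twistings $\alpha,\alpha'$ is given by assumption. The only step requiring care is the pivotal lemma $J_\alpha^{[-,-]_\alpha}=\alpha^{2}\circ J$; once it is established, every other step is essentially bookkeeping. Multiplicativity of $\alpha$ (together with its being a morphism of $[-,-]$) is what allows the $\alpha$-powers to line up cleanly on both sides of the Hom-Malcev identity.
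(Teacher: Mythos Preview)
Your proposal is correct. The paper actually states this theorem without proof (it is treated as a known Yau-twist result, cf.~\cite{Yau}), so there is nothing to compare against; your argument via the key identity $J_\alpha^{[-,-]_\alpha}(x,y,z)=\alpha^{2}\bigl(J(x,y,z)\bigr)$ is the standard and expected one, and all steps check out.
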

The following result gives a procedure to construct representation of Hom-Malcev algebra by
a representation of Malcev algebra,  morphism and linear map.
\begin{prop} \label{prop:twistingrepresentation}
   Let $(A, [-, -])$ be a Malcev algebra, $\alpha: A\to A$ be a morphism on $A$,  $(V, \varrho)$ be a representation of $A$ and $\beta:V\to V$ be a linear map such that
 $\beta \varrho(x)(a)=\varrho(\alpha(x))\beta(a).$
 Then $(V, \widetilde{\varrho},\beta)$ is a representation of the multiplicative Hom-Malcev algebra $(A, [-, -]_\alpha,\alpha)$, where
 $$\widetilde{\varrho}(x)(a)=\varrho(\alpha(x))\beta(a),\ \forall \  \ x\in A,\ a\in V.$$
 \end{prop}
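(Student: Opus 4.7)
The plan is to verify the two defining identities \eqref{rephommalcev} and \eqref{representation H-M} for the triple $(V,\widetilde{\varrho},\beta)$ with respect to the twisted Hom-Malcev algebra $(A,[-,-]_{\alpha},\alpha)$, by reducing each identity to the corresponding classical identity for the representation $(V,\varrho)$ of the Malcev algebra $(A,[-,-])$, together with the commutation hypothesis $\beta\,\varrho(x)=\varrho(\alpha(x))\,\beta$ and the fact that $\alpha$ is a Malcev morphism.

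First, I would dispose of \eqref{rephommalcev}: a direct two-line computation gives
$\widetilde{\varrho}(\alpha(x))\beta(a)=\varrho(\alpha^{2}(x))\beta^{2}(a)$, and the same expression comes out of $\beta\,\widetilde{\varrho}(x)(a)=\beta\,\varrho(\alpha(x))\beta(a)$ after a single application of the twisting condition. The real work is \eqref{representation H-M}. The key preliminary observation is the ``normal form'' $\widetilde{\varrho}(x_{1})\widetilde{\varrho}(x_{2})\cdots\widetilde{\varrho}(x_{k})=\varrho(\alpha(x_{1}))\varrho(\alpha^{2}(x_{2}))\cdots\varrho(\alpha^{k}(x_{k}))\beta^{k}$, obtained by iteratively moving each $\beta$ past every $\varrho$ on its right using $\beta\,\varrho=\varrho\alpha\,\beta$. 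I would then apply this to every composition appearing in \eqref{representation H-M} and simultaneously rewrite every twisted bracket via $[u,v]_{\alpha}=\alpha([u,v])$, using that $\alpha$ is a Malcev morphism to collect iterated brackets, so that
\begin{align*}
\widetilde{\varrho}(\alpha^{2}(x))\widetilde{\varrho}(\alpha(y))\widetilde{\varrho}(z)(a)
&=\varrho(\alpha^{3}(x))\varrho(\alpha^{3}(y))\varrho(\alpha^{3}(z))\beta^{3}(a),\\
\widetilde{\varrho}(\alpha^{2}(y))\widetilde{\varrho}([z,x]_{\alpha})\beta(a)
&=\varrho(\alpha^{3}(y))\varrho(\alpha^{3}([z,x]))\beta^{3}(a),
\end{align*}
and analogously for the two remaining terms, while on the other side
$\widetilde{\varrho}([[x,y]_{\alpha},\alpha(z)]_{\alpha})\beta^{2}(a)=\varrho(\alpha^{3}([[x,y],z]))\beta^{3}(a)$.

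After all factors have been pushed to this normal form, each side is $\beta^{3}$ applied to an expression in $\varrho$ evaluated at the points $\alpha^{3}(x),\alpha^{3}(y),\alpha^{3}(z)$. The remaining equality is then precisely the classical Malcev representation identity for $(V,\varrho)$ applied to the triple $(\alpha^{3}(x),\alpha^{3}(y),\alpha^{3}(z))$, which holds by hypothesis. I expect the only real obstacle to be notational: keeping track of the powers of $\alpha$ that accumulate from the recursive use of $\beta\,\varrho(-)=\varrho(\alpha(-))\beta$ and from unfolding $[-,-]_{\alpha}=\alpha\circ[-,-]$. Everything else is a mechanical rewrite, entirely parallel in spirit to the computation already carried out for $\varrho^{\alpha}$ in the preceding theorem.
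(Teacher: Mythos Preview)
Your proposal is correct and follows essentially the same strategy as the paper: verify \eqref{rephommalcev} directly, then use the intertwining hypothesis $\beta\varrho(x)=\varrho(\alpha(x))\beta$ together with $[u,v]_{\alpha}=\alpha([u,v])$ and the morphism property of $\alpha$ to reduce \eqref{representation H-M} to the classical Malcev representation identity. The only cosmetic difference is that you push all copies of $\beta$ to the right, obtaining each term as $\big(\text{classical expression at }\alpha^{3}(x),\alpha^{3}(y),\alpha^{3}(z)\big)\beta^{3}(a)$, whereas the paper pushes $\beta$ to the left and lands on $\beta^{3}\big(\text{classical expression at }x,y,z\big)(a)$; both reductions are equivalent and the remaining vanishing is the untwisted hypothesis on $(V,\varrho)$.
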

 \begin{proof}
 For any $x, y, z \in A$ and $a \in V$, by \eqref{rephommalcev}, we have
 $$\beta \widetilde{\varrho}(x)(a)=\beta(\varrho(\alpha(x))\beta(a))=\varrho(\alpha^{2}(x))\beta^{2}(a)=\widetilde{\varrho}(\alpha(x))\beta(a).$$
Combining with \eqref{representation H-M} and using the multiplicativity condition, we get
\begin{align*}
&\widetilde{\varrho}([[x, y]_\alpha, \alpha (z)]_\alpha)(\beta^{2}(a))-  \widetilde{\varrho}(\alpha^{2}(x))\widetilde{\varrho}(\alpha (y))\widetilde{\varrho}(z)(a) +\widetilde{\varrho}(\alpha^{2}(z))\widetilde{\varrho}(\alpha (x))\widetilde{\varrho}(y)(a)\\
&+ \widetilde{\varrho}(\alpha^{2}(y)) \widetilde{\varrho}([x,z]_\alpha)\beta(a) + \widetilde{\varrho}(\alpha ([y,z]_\alpha))\widetilde{\varrho}(\alpha (x))\beta(a)  \\=&
 \beta^{3}\Big(\varrho([[x, y], z]) - \varrho(x)\varrho(y)\varrho(z) + \varrho(z)\varrho(x)\varrho(y) +\varrho(y)\varrho([x,z]) +\varrho([y, z])\varrho(x)\Big)(a) = 0.
 \end{align*}
Then $(V, \widetilde{\varrho},\beta)$ is a representation of $(A, [-, -]_\alpha,\alpha)$ on $V$.
\end{proof}

Let $V^*$ denote  the dual vector space of $V$. Denote the canonical pairing between $V^{*}$ and $V$ by
$$$$
\begin{equation}
\langle\cdot ,\cdot \rangle:V^* \times V\rightarrow \mathbb{K},\quad a^*(b)= \langle a^*,b\rangle=\langle b,a^*\rangle,\;\;\forall  a^*\in V^*,b\in V.
\end{equation}
Let $(V,\varrho,\beta)$ be a representation of a Hom-Malcev algebra $(A,[-,-],\alpha)$ and $V^{*}$ be the dual of vector space $V$.

In the sequel, we always assume that $\beta$ is invertible.
Define $\varrho^*:A\longrightarrow gl(V^*)$ as usual by
$$\langle \varrho^*(x)(\xi),a\rangle=-\langle\xi,\varrho(x)(a)\rangle,\quad \forall \ x\in A,a\in V,\xi\in V^*.$$
However, in general $\varrho^*$ is not a representation of $A$ anymore. Define $\varrho^\star:A\longrightarrow gl(V^*)$ by
\begin{equation}\label{eq:new1}
 \varrho^\star(x)(\xi)=\varrho^*(\alpha(x))\big((\beta^{-2})^*(\xi)\big),\quad\forall \ x\in A,\xi\in V^*.
\end{equation}

\begin{lem}\label{lem:dualrep}
 Let $(V,\varrho,\beta)$ be a representation of a Hom-Malcev algebra $(A,[-,-],\alpha)$ and $V^{*}$ be the dual of vector space $V$, where $\beta$ is invertible. Then $\varrho^\star:A\longrightarrow gl(V^*)$ defined above by \eqref{eq:new1} is a representation of $(A,[-,-],\alpha)$ on $V^*$ with respect to $(\beta^{-1})^*$, which is called the  dual representation of $(V,\varrho,\beta)$.
\end{lem}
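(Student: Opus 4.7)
The plan is to verify the two representation axioms \eqref{rephommalcev} and \eqref{representation H-M} directly for the triple $(V^*,\varrho^\star,(\beta^{-1})^*)$. Both are equalities of linear maps on $V^*$, so I would test each one by pairing with an arbitrary $v\in V$. Unwinding the definition gives $\langle \varrho^\star(x)\xi,v\rangle = -\langle \xi,\beta^{-2}\varrho(\alpha(x))v\rangle$, converting each axiom into an equality of vectors in $V$ that I aim to derive from the corresponding axiom for $(V,\varrho,\beta)$. The main tool throughout will be the rearranged twist-compatibility identity $\beta^{-1}\varrho(\alpha(x)) = \varrho(x)\beta^{-1}$, obtained from \eqref{rephommalcev} using invertibility of $\beta$, together with its shifted version $\beta^{-1}\varrho(\alpha^{k+1}(x)) = \varrho(\alpha^{k}(x))\beta^{-1}$ for any $k\geq 0$.

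For the twist-compatibility axiom $\varrho^\star(\alpha(x))(\beta^{-1})^* = (\beta^{-1})^*\varrho^\star(x)$, pairing with $v$ gives $-\langle\xi,\beta^{-3}\varrho(\alpha^{2}(x))v\rangle$ on the left and $-\langle\xi,\beta^{-2}\varrho(\alpha(x))\beta^{-1}v\rangle$ on the right; a single application of the shifted rearrangement identifies the two and closes this step.

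For the main identity \eqref{representation H-M} applied to $\varrho^\star$ and $(\beta^{-1})^*$, each of the five summands, after pairing with $v$, becomes $-\langle\xi,T_{i}v\rangle$ for an operator $T_{i}$ assembled from compositions $\varrho(\alpha^{k_{i}}(\,\cdot\,))$ and powers of $\beta^{-1}$. I would push every $\beta^{-1}$ past the $\varrho$'s using the rearrangement above, normalising each term into the form (common power of $\beta^{-1}$) composed with a pure product of $\varrho$'s evaluated at $\alpha$-shifted arguments. Stripping off this common $\beta^{-N}$, the equality that remains is precisely \eqref{representation H-M} for $(V,\varrho,\beta)$ with arguments uniformly shifted by powers of $\alpha$; it holds by hypothesis, with multiplicativity of $\alpha$ used to identify the shifted brackets $\alpha^{k}([[x,y],\alpha(z)])$, $\alpha^{k}([z,x])$, $\alpha^{k}([y,z])$ with $[[\alpha^{k}(x),\alpha^{k}(y)],\alpha^{k+1}(z)]$ and so on.

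The main obstacle will be the asymmetric bookkeeping: two summands of \eqref{representation H-M} carry three $\varrho$-factors while the other two carry only two $\varrho$-factors plus an extra $\beta$, so the number of $\beta^{-1}$'s to be pushed through differs term-by-term and shifts the $\alpha$-exponents by different amounts. Choosing a uniform target power $N$ matched to the deepest composition, and performing the commutations in a fixed left-to-right order, should keep the rearrangement transparent and avoid index errors; no genuinely new algebraic content is needed beyond \eqref{rephommalcev}, \eqref{representation H-M} and multiplicativity of $\alpha$.
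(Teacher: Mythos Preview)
Your approach is essentially the same as the paper's: pair each side with an arbitrary $v\in V$, unravel $\varrho^\star$ via $\langle\varrho^\star(x)\xi,v\rangle=-\langle\xi,\beta^{-2}\varrho(\alpha(x))v\rangle$, and reduce everything to the axioms \eqref{rephommalcev}--\eqref{representation H-M} for $(V,\varrho,\beta)$ by repeatedly commuting $\beta^{-1}$ through $\varrho(\alpha(\cdot))$ and invoking multiplicativity of $\alpha$. The paper organises the computation in the opposite order (it applies \eqref{representation H-M} first and then reassembles the $\varrho^\star$-terms), but the content is identical.

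One point to watch: dualization reverses the order of composed operators, so after your normalisation the operator you obtain on $V$ is not literally \eqref{representation H-M} with each variable replaced by $\alpha^k$ of itself; the factors appear in the opposite order and hence with different $\alpha$-weights term-by-term. The paper has the same phenomenon --- the identity it actually lands on for $\varrho^\star$ is a permuted form of \eqref{representation H-M} (with the roles of $x$ and $z$ interchanged in the triple-product terms, etc.) rather than \eqref{representation H-M} verbatim, and it simply asserts this suffices. So your claim that ``the equality that remains is precisely \eqref{representation H-M}'' is a slight overstatement; be prepared to do one final relabeling (or appeal to antisymmetry of the bracket together with the symmetry of the Hom-Malcev identity) to match the required form exactly.
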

\begin{proof}
For all $x\in A,\xi\in V^*$,  we have
\begin{eqnarray*}
\varrho^\star(\alpha(x))((\beta^{-1})^*(\xi))=\varrho^*(\alpha^{2}(x))(\beta^{-3})^*(\xi)=(\beta^{-1})^*(\varrho^*(\alpha(x))(\beta^{-2})^*(\xi))=(\beta^{-1})^*(\varrho^\star(x)(\xi)),
\end{eqnarray*}
which implies $\varrho^\star\big{(}\alpha(x)\big{)}\circ(\beta^{-1})^*=(\beta^{-1})^*\circ\varrho^\star(x)$.

On the other hand, for all $x,y,z\in A,\xi\in V^*$ and $a\in V$, we have
\begin{align*}
&\Big\langle \varrho^{\star}([[x,y],\alpha(z)])((\beta^{-2})^*(\xi)),a\Big\rangle =\Big\langle\varrho^*(\alpha ([[x,y],\alpha(z)]))((\beta^{-4})^*(\xi)),a\Big\rangle\\
&=-\Big\langle ((\beta^{-4})^*(\xi)),\varrho(\alpha([[x,y],\alpha(z)]))(a)\Big\rangle\\
&=-\Big\langle ((\beta^{-4})^*(\xi)),\varrho(\alpha^{3}(x))(\varrho(\alpha^{2}(y))(\varrho(\alpha(z))(\beta^{-2}(a)))) -\varrho(\alpha^{3}(z))(\varrho(\alpha^{2}(x))(\varrho(\alpha(y))(\beta^{-2}(a))))\\
 &\quad+ \varrho(\alpha^{3}(y))(\varrho(\alpha([z, x]))(\beta^{-1}(a))) - \varrho(\alpha^{2}([y, z]))(\varrho(\alpha^{2}(x))(\beta^{-1}(a)))\Big\rangle\\
&=-\Big\langle ((\beta^{-6})^*(\xi)),\varrho(\alpha^{5}(x))(\varrho(\alpha^{4}(y))(\varrho(\alpha^{3}(z))(a))) - \varrho(\alpha^{5}(z))(\varrho(\alpha^{4}(x))(\varrho(\alpha^{3}(y))(a)))\\
&\quad+ \varrho(\alpha^{5}(y))(\varrho(\alpha^{3}([z, x]))(\beta(a))) - \varrho(\alpha^{4}([y, z]))(\varrho(\alpha^{4}(x))(\beta(a)))\Big\rangle\\
&=-\Big\langle -\varrho^{\ast}(\alpha^{3}(z))(\varrho^{\ast}(\alpha^{4}(y))(\varrho^{\ast}(\alpha^{5}(x))((\beta^{-6})^*(\xi)))) \\
&\quad + \varrho^{\ast}(\alpha^{3}(y))(\varrho^{\ast}(\alpha^{4}(x))(\varrho^{\ast}(\alpha^{5}(z))((\beta^{-6})^*(\xi))))\\
&\quad + \varrho^{\ast}(\alpha^{2}([z, x]))(\varrho^{\ast}(\alpha^{4}(y))(\beta^{-5})^* (\xi)) - \varrho^{\ast}(\alpha^{3}(x))(\varrho^{\ast}(\alpha^{3}([y, z]))((\beta^{-5})^*(\xi))), a\Big\rangle\\
&=-\Big\langle -\varrho^{\star}(\alpha^{2}(z))\varrho^{\star}(\alpha (y))\varrho^{\star}(x)+  \varrho^{\star}(\alpha^{2}(y))\varrho^{\star}(\alpha (x))\varrho^{\star}(z)\\
&\quad+ \varrho^{\star}(\alpha([z, x]))(\varrho^{\star}(\alpha(y))((\beta^{-1})^* (\xi))) - \varrho^{\star}(\alpha^{2}(x))(\varrho^{\star}([y, z])((\beta^{-1})^*(\xi))),a\Big\rangle,
\end{align*}
which implies that
\begin{align*}
\varrho^\star([[x,y],\alpha(z)])(\beta^{-2})^*&=\varrho^{\star}(\alpha^{2}(z))\varrho^{\star}(\alpha (y))\varrho^{\star}(x)-  \varrho^{\star}(\alpha^{2}(y))\varrho^{\star}(\alpha (x))\varrho^{\star}(z)\\
&\quad- \varrho^{\star}(\alpha([z, x]))\varrho^{\star}(\alpha(y))(\beta^{-1})^*  + \varrho^{\star}(\alpha^{2}(x))\varrho^{\star}([y, z])(\beta^{-1})^*.
\end{align*}
Therefore, $\varrho^\star$ is a representation of $(A,[-,-],\alpha)$ on $V^*$ with respect to $(\beta^{-1})^*$.
\end{proof}

\begin{lem}\label{lem:dualdual}
Let  $(V,\varrho,\beta)$ be a representation of a Hom-Malcev algebra  $(A,[-,-],\alpha)$ and let $V^{*}$ be dual of $V$, where $\beta$ is invertible. Then  $(\varrho^\star)^\star=\varrho.$
\end{lem}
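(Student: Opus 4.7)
My plan is to compute $(\varrho^\star)^\star(x)(a)$ directly from the formula \eqref{eq:new1}, specialized to the representation $(V^*,\varrho^\star,(\beta^{-1})^*)$ in place of $(V,\varrho,\beta)$, and to pair the result with an arbitrary $\xi\in V^*$ so that the identity $(\varrho^\star)^\star=\varrho$ can be read off the pairing. Throughout, I will use the standard identification $V^{**}\cong V$ via the canonical pairing and the elementary facts that for an invertible $f\in \mathrm{End}(V)$, $(f^{-1})^*=(f^*)^{-1}$ and $f^{**}=f$ under this identification.

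First I would unfold the definition. For the representation $(V^*,\varrho^\star,(\beta^{-1})^*)$, the twist $\gamma:=(\beta^{-1})^*$ is invertible with inverse $\beta^*$. Applying \eqref{eq:new1} verbatim to this datum gives
\[
(\varrho^\star)^\star(x)(\eta)=(\varrho^\star)^*(\alpha(x))\big((\gamma^{-2})^*(\eta)\big)=(\varrho^\star)^*(\alpha(x))\big((\beta^{2})^{**}(\eta)\big),
\]
so under $V^{**}\cong V$, for $a\in V$ this reads $(\varrho^\star)^\star(x)(a)=(\varrho^\star)^*(\alpha(x))(\beta^{2}(a))$.

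Next I would pair this element with an arbitrary $\xi\in V^*$, unwind the two layers of linear duals (each producing a sign), and then unwind $\varrho^\star$ using \eqref{eq:new1}:
\[
\langle (\varrho^\star)^\star(x)(a),\xi\rangle=-\langle \beta^{2}(a),\varrho^\star(\alpha(x))(\xi)\rangle=\langle (\beta^{-2})^*(\xi),\varrho(\alpha^{2}(x))(\beta^{2}(a))\rangle=\langle \xi,\beta^{-2}\varrho(\alpha^{2}(x))\beta^{2}(a)\rangle.
\]
The final step is then to simplify $\beta^{-2}\varrho(\alpha^{2}(x))\beta^{2}$. Iterating the compatibility relation \eqref{rephommalcev}, namely $\varrho(\alpha(y))\beta=\beta\varrho(y)$, once with $y=\alpha(x)$ and once with $y=x$, yields $\varrho(\alpha^{2}(x))\beta^{2}=\beta^{2}\varrho(x)$. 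Substituting back gives $\langle (\varrho^\star)^\star(x)(a),\xi\rangle=\langle \xi,\varrho(x)(a)\rangle$, which is exactly $\langle \varrho(x)(a),\xi\rangle$, so $(\varrho^\star)^\star(x)(a)=\varrho(x)(a)$ for all $x\in A$ and $a\in V$.

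There is no real obstacle here beyond bookkeeping: the only subtle points are (i) remembering that the twist used to form the second dual representation is $(\beta^{-1})^*$, so the factor appearing is $\big(((\beta^{-1})^*)^{-2}\big)^*=\beta^{2}$ under $V^{**}\cong V$, and (ii) applying the covariance \eqref{rephommalcev} twice to intertwine $\varrho(\alpha^{2}(x))$ with $\beta^{2}$. Once these are in place, the two minus signs cancel and the identity falls out by non-degeneracy of the pairing.
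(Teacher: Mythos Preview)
Your proof is correct and follows essentially the same route as the paper's own argument: unfold $(\varrho^\star)^\star(x)(a)$ as $(\varrho^\star)^*(\alpha(x))(\beta^2(a))$, pair with an arbitrary $\xi\in V^*$, unwind the two dualisations and the definition of $\varrho^\star$, and finish using the covariance $\varrho(\alpha^2(x))\beta^2=\beta^2\varrho(x)$ from \eqref{rephommalcev}. The only difference is that you make the final use of \eqref{rephommalcev} explicit, whereas the paper absorbs it into the last displayed equality.
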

\begin{proof}
For all $x\in A,a\in V,\xi\in V^*$, we have
\begin{eqnarray*}
\langle (\varrho^\star)^\star(x)(a),\xi\rangle
&=&\langle(\varrho^\star)^*(\alpha(x))(\beta^2(a)),\xi\rangle
=-\langle\beta^2(a),\varrho^\star(\alpha(x))(\xi)\rangle\\
&=&-\langle\beta^2(a),\varrho^*(\alpha^2(x))((\beta^{-2})^*(\xi))\rangle
=\langle\varrho(\alpha^2(x))(\beta^2(a)),(\beta^{-2})^*(\xi)\rangle\\
&=&\langle\varrho(x)(a),\xi\rangle,
\end{eqnarray*}
which implies that $(\varrho^\star)^\star=\varrho$.
\end{proof}

Consider the dual representation of the adjoint representation, we have
\begin{cor}\label{lem:rep}
 Let $(A,[-,-],\alpha)$ be a regular Hom-Malcev algebra and let $A^{*}$ be the dual vector space of $A$. Then $ad^\star:A\longrightarrow A^*$  defined
 for all $x\in A, \ \xi\in A^*$ by
 \begin{equation} \label{eq:adstar}
  ad^\star(x)\xi=ad^*(\alpha(x))(\alpha^{-2})^*(\xi),
 \end{equation}
 is a representation of $(A,[-,-],\alpha)$ on $A^*$ with respect to $(\alpha^{-1})^*$, called the co-adjoint representation.
\end{cor}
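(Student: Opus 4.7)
The plan is essentially a specialization of Lemma \ref{lem:dualrep} to a particular representation, so the proof will be very short: the work has already been done. The role of the regularity assumption is only to guarantee the invertibility hypothesis needed by that lemma.

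More precisely, I would proceed as follows. First, recall from the example preceding Proposition \ref{semidirectprduct HomMalcev} that the adjoint map $ad : A \to \mathrm{End}(A)$, $ad_x(y) = [x,y]$, is a representation of the Hom-Malcev algebra $(A,[-,-],\alpha)$ on the vector space $V = A$ with respect to the twist $\beta = \alpha$. Since $(A,[-,-],\alpha)$ is regular by hypothesis, $\alpha$ is invertible, which is exactly the hypothesis on $\beta$ required in Lemma \ref{lem:dualrep}.

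Next, I would simply apply Lemma \ref{lem:dualrep} with $V = A$, $\varrho = ad$, and $\beta = \alpha$. Formula \eqref{eq:new1} specializes to
\[
 ad^\star(x)(\xi) \;=\; ad^*(\alpha(x))\bigl((\alpha^{-2})^*(\xi)\bigr), \qquad \forall\, x \in A,\ \xi \in A^*,
\]
which is exactly the defining formula \eqref{eq:adstar} of the statement. Lemma \ref{lem:dualrep} then directly yields that $ad^\star : A \longrightarrow gl(A^*)$ is a representation of $(A,[-,-],\alpha)$ on $A^*$ with respect to $(\alpha^{-1})^* = (\beta^{-1})^*$.

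There is no real obstacle here: the compatibility with $(\alpha^{-1})^*$ and the Hom-Malcev defining identity \eqref{representation H-M} for $ad^\star$ are already established in full generality in the proof of Lemma \ref{lem:dualrep}. The only thing to check, which is immediate, is that the adjoint representation satisfies the hypotheses of that lemma, namely that it is a genuine representation of $(A,[-,-],\alpha)$ (ensured by the Hom-Malcev identity \eqref{Hom-Malcev:Jacobiannotation} together with anti-symmetry of $[-,-]$) and that its twist $\alpha$ is invertible (ensured by regularity). Hence the corollary follows at once.
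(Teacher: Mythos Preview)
Your proposal is correct and is exactly the approach the paper takes: the corollary is stated immediately after Lemma~\ref{lem:dualrep} with the remark ``Consider the dual representation of the adjoint representation'', and no separate proof is given. Your specialization $V=A$, $\varrho=ad$, $\beta=\alpha$ (with regularity supplying the invertibility of $\beta$) is precisely what is intended.
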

Using the co-adjoint representation $ad^\star$, we can obtain a semidirect product Hom-Malcev algebra
structure on $A\oplus A^*$.

\begin{cor}
Let $(A,[-,-],\alpha)$ be a regular Hom-Malcev algebra and let $A^{*}$ be the dual vector space of $A$.  Then there is a natural Hom-Malcev algebra
$(A\oplus A^*,[-,-]_s,\alpha+(\alpha^{-1})^*)$ where the Hom-Malcev bracket $[-,-]_s$ is given by
$$[x+\xi,y+\eta]=[x,y]+ad^\star(x)(\eta)-ad^\star(y)(\xi),$$
for all $x, y\in A,\ \xi , \eta\in A^*.$\end{cor}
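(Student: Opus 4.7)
The plan is to obtain this corollary as a direct combination of the two results already established, namely Corollary \ref{lem:rep} (which supplies the co-adjoint representation) and Proposition \ref{semidirectprduct HomMalcev} (which manufactures a Hom-Malcev algebra out of any representation via the semidirect product construction). Nothing new needs to be verified; it is purely a matter of specializing the general semidirect product to the co-adjoint data.

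In detail, by Corollary \ref{lem:rep} the map $ad^\star : A \longrightarrow \mathrm{End}(A^*)$ defined in \eqref{eq:adstar} is a representation of the regular Hom-Malcev algebra $(A,[-,-],\alpha)$ on $A^*$ with respect to the twisting map $\beta := (\alpha^{-1})^*$. I would then apply Proposition \ref{semidirectprduct HomMalcev} with $V = A^*$, $\varrho = ad^\star$ and this $\beta$. The proposition yields immediately a Hom-Malcev algebra structure on $A \oplus A^*$ whose twisting map is $\alpha + \beta = \alpha + (\alpha^{-1})^*$, and whose bracket reads
\begin{equation*}
[x+\xi, y+\eta]_{ad^\star} \;=\; [x,y] + ad^\star(x)(\eta) - ad^\star(y)(\xi),
\end{equation*}
which is precisely the formula for $[-,-]_s$ stated in the corollary.

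The only point worth flagging is the role of the regularity hypothesis: Corollary \ref{lem:rep} requires $\alpha$ (equivalently $\beta$, via $(\alpha^{-1})^*$) to be invertible, and this is exactly the regularity assumption in the statement, so the whole chain of implications is well-defined. There is no genuine obstacle or nontrivial calculation to confront beyond the straightforward bookkeeping of symbols, so the proof reduces to citing Corollary \ref{lem:rep} and Proposition \ref{semidirectprduct HomMalcev} in sequence.
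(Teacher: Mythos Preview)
Your proposal is correct and mirrors the paper's own reasoning: the paper does not give a separate proof but simply remarks that, using the co-adjoint representation $ad^\star$ from Corollary~\ref{lem:rep}, one obtains the semidirect product Hom-Malcev algebra on $A\oplus A^*$ via Proposition~\ref{semidirectprduct HomMalcev}. Your argument is exactly this specialization, with the regularity hypothesis correctly identified as what makes Corollary~\ref{lem:rep} applicable.
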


Let $(A,[-,-],\alpha)$ be a Hom-Malcev algebra  and  $\varrho$ be a representation of $A$ on $V$ with respect to $\beta$. We say that we have a \textsf{Hom-MalcevRep} pair
and refer to it with the tuple
 $(A,[-,-],\alpha, \varrho,\beta)$.
Then, the following terminology is motivated by the notion of  Kupershmidt operator as a generalization of Rota-Baxter operator of weight $0$.
\begin{defn}
A linear map $T : V \to  A$ is called a Kupershmidt operator on a \textsf{Hom-MalcevRep} pair $(A,[-,-],\alpha, \varrho,\beta)$  if
for all $a, b \in V,$
\begin{align}\label{ophommalcev}
 \alpha\circ T =  T\circ\beta, \quad \quad [T (a), T (b)] = T \big(\varrho(T (a))b - \varrho(T (b))a\big).
 \end{align}
 \label{df:ophommalcalg}
\end{defn}
 \begin{ex}
 A Rota-Baxter operator of  weight $0$ on a Hom-Malcev algebra $A$ is
just a Kupershmidt operator on a \textsf{Hom-MalcevRep} pair $(A,[-,-],\alpha,ad)$, that is, $\mathcal{R}$ satisfies
$$ \mathcal{R}\circ\alpha = \alpha\circ \mathcal{R}, \quad \quad [\mathcal{R}(x),\mathcal{R}(y)] = \mathcal{R}([\mathcal{R}(x), y] + [x, \mathcal{R}(y)])$$
for all $x,y\in A$.
\end{ex}
\begin{ex}
If $\alpha=Id_A$ and $\beta=Id_V$, then the Definition \ref{df:ophommalcalg} coincides with the notion of Kupershmidt operators on  Malcev algebra.
\end{ex}

\begin{ex}
Let $T:V\rightarrow A$ be a Kupershmidt operator on the Malcev algebra $( A,[-,-])$ with respect $(V,\varrho)$, $\alpha: A\to A$ be a morphism on $A$ and $\beta:V\to V$ be a linear map such that
 $\alpha\circ T=T\circ \beta.$
 By Theorem \ref{twistingmalcevalg} and Proposition \ref{prop:twistingrepresentation}, for all $a,b \in V$,
\begin{align*}
[T(a),T(b)]_{\alpha}
&= \alpha[T(a),T(b)]=\alpha\big(T(\varrho(T(a))b-\varrho(T(b))a\big),\\
T\big(\tilde{\varrho}(T(a))b-\tilde{\varrho}(T(b))a\big)
&= T\big((\varrho(\alpha(T(a)))\beta(b)-\varrho(\alpha(T(b)))\beta(a)\big).
\end{align*}
 Clearly, it follows that the map $T:V\rightarrow A$ is a Kupershmidt operator on the \textsf{Hom-MalcevRep} pair  $( A,[-,-]_\alpha,\alpha,\tilde{\varrho}, \beta)$.
\end{ex}
The following proposition gives a characterization of a Kupershmidt operator $T$ in terms of a Hom-Malcev subalgebra structure on the graph of $T$.

\begin{prop}
A map $T:V\rightarrow A$ is  a Kupershmidt operator on a  \textsf{Hom-MalcevRep} pair $(A,[-,-],\alpha,\varrho)$ if and only if the graph $\mathrm{Gr}(T)=\{(T(a),a)|~a\in V\}$ of the map $T$
is a  Hom-Malcev subalgebra of the semi-direct product Hom-Malcev algebra $(A\oplus V,[-,-]_{\rho},\alpha+\beta)$, defined in Proposition \ref{semidirectprduct HomMalcev}.
\end{prop}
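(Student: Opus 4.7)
The plan is to verify that being a Hom-Malcev subalgebra of the semidirect product precisely encodes the two defining conditions of a Kupershmidt operator. Since $\mathrm{Gr}(T)$ is a subset of the already established Hom-Malcev algebra $(A\oplus V,[-,-]_\varrho,\alpha+\beta)$ from Proposition \ref{semidirectprduct HomMalcev}, the Hom-Malcev identity \eqref{Hom-Malcev:Jacobiannotation} restricts freely to any subset, so being a subalgebra reduces entirely to two closure conditions: closure under the twisting map $\alpha+\beta$ and closure under the bracket $[-,-]_\varrho$. I would therefore treat both implications simultaneously by matching each Kupershmidt condition in \eqref{ophommalcev} with one of these closure conditions.

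First, for closure under the twisting map, I would compute, for any $a\in V$,
\[
(\alpha+\beta)(T(a)+a)=\alpha(T(a))+\beta(a),
\]
and observe that this element belongs to $\mathrm{Gr}(T)$ if and only if its first component equals $T$ applied to its second component, i.e. $\alpha(T(a))=T(\beta(a))$. This is exactly the first Kupershmidt condition $\alpha\circ T=T\circ\beta$.

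Next, for closure under the bracket, I would expand, for any $a,b\in V$,
\[
[T(a)+a,\,T(b)+b]_{\varrho}=[T(a),T(b)]+\varrho(T(a))b-\varrho(T(b))a,
\]
and again demand that the first component equals $T$ applied to the second. This yields
\[
[T(a),T(b)]=T\bigl(\varrho(T(a))b-\varrho(T(b))a\bigr),
\]
which is precisely the second Kupershmidt condition in \eqref{ophommalcev}. The two equivalences can be read in either direction, so the characterization follows.

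I do not anticipate any genuine obstacle here; the argument is essentially a bookkeeping one. The only point worth stating carefully is the reason the Hom-Malcev identity needs no separate verification on $\mathrm{Gr}(T)$, namely that identities of this form are inherited by any subset that is closed under the structure maps, and $\mathrm{Gr}(T)$ is such a subset precisely under the two closure conditions just identified with \eqref{ophommalcev}.
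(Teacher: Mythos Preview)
Your argument is correct and is exactly the standard one: closure of $\mathrm{Gr}(T)$ under $\alpha+\beta$ is equivalent to $\alpha\circ T=T\circ\beta$, closure under $[-,-]_\varrho$ is equivalent to the second Kupershmidt identity, and the Hom-Malcev identity is inherited automatically from the ambient semi-direct product. The paper itself states this proposition without proof, so there is nothing to compare against; your write-up fills the gap cleanly. One minor remark: in your bracket computation you (correctly) use $[x+a,y+b]_\varrho=[x,y]+\varrho(x)b-\varrho(y)a$, which is the formula actually employed in the paper's subsequent proofs, even though the displayed definition in Proposition~\ref{semidirectprduct HomMalcev} has the roles of $a$ and $b$ swapped by an apparent typo.
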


It is well-known that Kupershmidt operators on Malcev algebras can be characterized in terms of the Nijenhuis operators. In the next result, we characterize Kupershmidt operators on Hom-Malcev algebras in terms of the Nijenhuis operators. Let us first recall  that a linear map $N:A\rightarrow A$ is called a Nijenhuis operator on the Hom-Malcev algebra $(A,[-,-],\alpha)$ if $N$ satisfies  $\alpha\circ N=N\circ \alpha$ and the equation
$$[N(x),N(y)]=N\big([N(x),y]-[N(y),x]-N([x,y])\big), $$
for all  $x,y\in A$. Then, we have the following characterization of Kupershmidt operators on Hom-Malcev algebras.
\begin{prop}
A map $T:V\rightarrow A$ is a Kupershmidt operator on $(A,[-,-],\alpha,\varrho,\beta)$ if and only if the operator
$$N_T=\begin{bmatrix}
   0 & T \\
    0  & 0
\end{bmatrix}:A\oplus V\rightarrow A\oplus V $$
is a Nijenhuis operator on the semi-direct product Hom-Malcev algebra $(A\oplus V,[-,-]_{\varrho},\alpha+\beta)$.
\end{prop}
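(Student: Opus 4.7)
\medskip\noindent\textbf{Proof plan.}
The key observation is that $N_T$ has a very rigid shape: it maps $A\oplus V$ into the subspace $A\oplus\{0\}$ and vanishes on that subspace, so in particular $N_T^2=0$ and $N_T(u,v)=(Tv,0)$ for every $(u,v)\in A\oplus V$. This rigidity will force the Nijenhuis identity on the semidirect product to collapse precisely onto the two halves of the Kupershmidt condition \eqref{ophommalcev}.

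First I would verify the linear compatibility $(\alpha+\beta)\circ N_T=N_T\circ(\alpha+\beta)$ on an arbitrary element $(x,a)\in A\oplus V$. Both sides are supported in $A\oplus\{0\}$, and they give $\alpha(Ta)$ and $T(\beta a)$ respectively, so this condition is equivalent to $\alpha\circ T=T\circ\beta$, the first half of Definition \ref{df:ophommalcalg}. Next, I would take arbitrary elements $X=(x,a)$, $Y=(y,b)$ of $A\oplus V$ and expand both sides of the Nijenhuis equation
\[[N_T(X),N_T(Y)]_{\varrho}=N_T\bigl([N_T(X),Y]_\varrho+[X,N_T(Y)]_\varrho-N_T([X,Y]_\varrho)\bigr)\]
using the semidirect-product bracket of Proposition \ref{semidirectprduct HomMalcev}. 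The left-hand side reduces at once to $([Ta,Tb],0)$, since $N_T(X)$ and $N_T(Y)$ have trivial $V$-components. On the right-hand side the three interior brackets each lie in $A\oplus V$, but the outer $N_T$ annihilates their $A$-components and applies $T$ to their $V$-components; tracking the surviving summands produces $\bigl(T(\varrho(Ta)b-\varrho(Tb)a),0\bigr)$.

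Matching the two sides yields precisely $[Ta,Tb]=T\bigl(\varrho(Ta)b-\varrho(Tb)a\bigr)$, the second half of Definition \ref{df:ophommalcalg}. Every step of this reduction is reversible, because at no stage is information discarded: both the Nijenhuis identity on the semidirect product and the Kupershmidt identity for $T$ on $V$ are equivalent to this single equation together with $\alpha T=T\beta$. I do not expect a conceptual obstacle; the only delicate point is the bookkeeping of the $V$-components in the three right-hand-side brackets, and the vanishing $N_T^2=0$ trims the computation substantially so that the match is transparent.
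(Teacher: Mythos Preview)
Your proposal is correct and follows essentially the same route as the paper: verify that the commutation $(\alpha+\beta)N_T=N_T(\alpha+\beta)$ is equivalent to $\alpha T=T\beta$, then compute both sides of the Nijenhuis identity on generic elements of $A\oplus V$ and observe that they collapse to $([Ta,Tb],0)$ and $(T(\varrho(Ta)b-\varrho(Tb)a),0)$ respectively. Your explicit use of $N_T^2=0$ to kill the $N_T([X,Y]_\varrho)$ term is a minor streamlining over the paper's line-by-line expansion, but not a genuinely different argument.
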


\begin{proof}
First, it is obvious that $N_T\circ(\alpha + \beta) = (\alpha + \beta) \circ N_T$ if and only if $T \circ \beta = \alpha\circ T$ .\\
 Then, let us consider the following expressions, where we use definition of the map $N_T$ and the bracket $[-,-]_{\varrho}$,
\begin{align}\label{Char2: eq1}
& [N_T(x+a),N_T(y+b)]_{\varrho}= [T(a)+0,T(b)+0]_{\varrho}=[T(a),T(b)], \\
\label{Char2: eq2}
\nonumber
&
N_T\big([N_T(x+a),y+b]_{\varrho}-[N_T(y+b),x+a]_{\varrho}-N_T([x+a,y+b]_{\varrho})\big)\\\nonumber
& =N_T \big(([T(a),y]+\varrho(T(a))b)-([T(b),x]+\varrho(T(b))a)-(0+T(\varrho(x)b-\varrho(y)a))\big)\\
& = T(\varrho(T(a))b-\varrho(T(b))a)
\end{align}
for all $x,y\in A,$ $a,b\in V$.
By \eqref{Char2: eq1} and \eqref{Char2: eq2}, it is clear that the condition
$$[N_T(x+a),N_T(y+b)]_{\rho}=N_T\big([N_T(x+a),y+b]_{\rho}-[N_T(y+b),x+a]_{\rho}-N_T([x+a,y+b]_{\rho})\big)$$
is equivalent to the condition
$$[T(a),T(b)]=T(\varrho(T(a))b-\varrho(T(b))a),$$
for all $x,y\in A,$ $a,b\in V$.
\end{proof}

 \section{Hom-pre-Malcev  algebras}\label{sec:Hom-pre-Malcev  algebras}
In this section, we generalize the notion of pre-Malcev algebras introduced in \cite{Madariaga} to the Hom
case and study the relationships with Hom-Malcev algebras in terms of Kupershmidt operators of Hom-Malcev algebras and Hom-pre-alternative algebras, and we also construct some examples of finite dimensional  Hom-Malcev algebras via Rota-Baxter operators. Moreover, we characterize the notion of  representation of Hom-pre-Malcev algebras via semidirect product  and provide some key constructions.
\subsection{Definition and basic properties}\label{subsec:Definition and basic properties}
\begin{defn}
A  $\textbf{Hom-pre-Malcev algebra}$ is  a  Hom-algebra $(A, \cdot, \alpha)$ satisfying, for any $x,y,z,t \in A$ and $[x,y]=x\cdot y-y\cdot x,$ the identity
\begin{equation}\label{HPM}
\begin{split}
  &[\alpha(y),\alpha(z)] \cdot \alpha(x \cdot t) + [[x , y] , \alpha(z)] \cdot \alpha^{2}(t) \\
&\qquad + \alpha^{2}(y) \cdot ([x , z] \cdot \alpha (t)) - \alpha^{2}(x) \cdot (\alpha (y)  \cdot (z \cdot t)) + \alpha^{2}(z) \cdot (\alpha (x) \cdot (y \cdot t))=0.\end{split}
\end{equation}
\end{defn}

The identity \eqref{HPM} is equivalent to
$HPM(x, y, z, t) = 0$, where for all $x,y,z,t\in A,$
\begin{equation}\label{HPMexpanded}
\begin{split}
HPM(x, y, z, t) &=  \alpha(y \cdot z) \cdot \alpha(x \cdot t) - \alpha(z \cdot y) \cdot \alpha(x \cdot t)\\
&\quad+ ((x \cdot y) \cdot \alpha(z)) \cdot \alpha^{2}(t) - ((y \cdot x) \cdot \alpha (z)) \cdot \alpha^{2}(t)\\
 &\quad- (\alpha (z) \cdot (x \cdot y)) \cdot \alpha^{2}(t) + (\alpha (z) \cdot (y \cdot x)) \cdot \alpha^{2}(t)\\
&\quad+ \alpha^{2}(y) \cdot ((x \cdot z) \cdot \alpha (t)) - \alpha^{2}(y) \cdot ((z \cdot x) \cdot \alpha (t))\\
 &\quad- \alpha^{2}(x) \cdot (\alpha (y)  \cdot (z \cdot t)) + \alpha^{2}(z) \cdot (\alpha (x) \cdot (y \cdot t)).
\end{split}
\end{equation}
Observe that when $\alpha = Id_A$, the Hom-pre-Malcev algebra $(A, \cdot, \alpha)$ reduces to the pre-Malcev algebra.
\begin{defn}
Let  $(A, \cdot, \alpha)$  be a Hom-pre-Malcev algebras. It is called as follows:
\begin{description}
\item[(1)] Multiplicative Hom-pre-Malcev algebra if $\alpha$ satisfies
\begin{equation}\label{Multiplicativityalpha}
    \alpha(x \cdot y) = \alpha(x) \cdot \alpha(y),\quad  \forall x, y \in A,
\end{equation}
\item[(2)] Regular  Hom-pre-Malcev algebra if $\alpha$ is inversible,
\item[(3)] Involutive  Hom-pre-Malcev algebra if $\alpha^2=Id$.
\end{description}
\end{defn}
Hom-pre-Malcev algebras generalize Hom-pre-Lie algebras. A Hom-pre-Lie algebra
 is a vector space $A$ with a bilinear product "$\bullet$" and a linear map $\alpha$ satisfying the Hom-pre-Lie identity for all $x, y, z \in A$,
$$HPL(x, y, z) = as_\alpha(x, y, z) - as_\alpha(y, x, z) = 0,$$
where $as_\alpha(x, y, z)=(x\bullet y)\bullet\alpha(z)-\alpha(x)\bullet(y\bullet z)$ is the Hom-associator. Note that
\begin{align*}
HPM(x, y, z, t) &= HPL([x, y],\alpha (z), \alpha (t)) - HPL([y, x], \alpha (z),  \alpha (t)) \\
& \quad+ HPL(\alpha (x), \alpha (y), [z, t])+ HPL(\alpha (y), \alpha (z), [x, t]) \\
& \quad- [\alpha^{2}(z), HPL(x, y, t)] + [\alpha^{2}(y), HPL(x, z, t)].
\end{align*}
So every Hom-pre-Lie algebra is a Hom-pre-Malcev algebra.

\begin{defn}
 Let  $(A, \cdot, \alpha)$ and  $(A', \cdot', \alpha')$ be two Hom-pre-Malcev algebras. A
linear map $f : A\to A' $ is called
\begin{description}
  \item[(1)] a weak morphism of Hom-pre-Malcev algebras if it satisfies, for all $x, y\in A$,
$$f(x)\cdot' f(y) = f(x\cdot y),$$
  \item[(2)]
 a morphism of Hom-pre-Malcev algebras if $f$ is a weak morphism and $f \circ \alpha = \alpha'\circ f.$
\end{description}
\end{defn}

The following theorem provides a procedure to construct  pre-Malcev algebra by a regular Hom-pre-Malcev algebra.
 \begin{thm}
 Let $\mathcal{A}=(A, \cdot)$ be a regular Hom-pre-Malcev algebra. Then $\mathcal{A}_{\alpha^{-1}} =(A, \cdot_{\alpha^{-1}})$ is a pre-Malcev algebra with $x \cdot_{\alpha^{-1}} y = \alpha^{-1} (x \cdot y).$
 \end{thm}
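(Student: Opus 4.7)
The plan is to verify the pre-Malcev identity \eqref{PM} for the new product $\cdot_{\alpha^{-1}}$ by rewriting every term as $\alpha^{-3}$ applied to the corresponding term of the Hom-pre-Malcev identity \eqref{HPM} for $\cdot$, and then invoking \eqref{HPM} to conclude. This is an ``untwisting'' procedure dual to Theorem~\ref{twistingmalcevalg}; it implicitly requires that $\alpha$ be multiplicative (so that $\alpha^{-1}$ is also an algebra morphism, $\alpha^{-1}(x\cdot y)=\alpha^{-1}(x)\cdot\alpha^{-1}(y)$), which I take to be part of the meaning of ``regular'' here.

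First I would record the two basic rewriting rules. From the definition $x\cdot_{\alpha^{-1}}y=\alpha^{-1}(x\cdot y)$ the induced commutator is $[x,y]_{\alpha^{-1}}=\alpha^{-1}([x,y])$, and, using multiplicativity of $\alpha^{-1}$ to pull the twists outside, one gets the identities
\begin{align*}
u\cdot_{\alpha^{-1}}(v\cdot_{\alpha^{-1}}w) &= \alpha^{-2}\bigl(\alpha(u)\cdot(v\cdot w)\bigr),\\
(u\cdot_{\alpha^{-1}}v)\cdot_{\alpha^{-1}}w &= \alpha^{-2}\bigl((u\cdot v)\cdot\alpha(w)\bigr).
\end{align*}
These are the only rules I need: iterating them to depth three turns any of the five summands of \eqref{PM} into $\alpha^{-3}$ acting on a monomial in $\cdot$ with the correct internal $\alpha$-twists.

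Next I would carry out the term-by-term bookkeeping. Applying the rules above to each of the five summands of \eqref{PM} (written with $\cdot_{\alpha^{-1}}$ in place of $\cdot$), I expect every term to emerge as $\alpha^{-3}$ applied to the corresponding summand of \eqref{HPM}. As a representative computation,
$$[y,z]_{\alpha^{-1}}\cdot_{\alpha^{-1}}(x\cdot_{\alpha^{-1}}t)=\alpha^{-3}\bigl([\alpha(y),\alpha(z)]\cdot\alpha(x\cdot t)\bigr),$$
and analogous identities match $[[x,y]_{\alpha^{-1}},z]_{\alpha^{-1}}\cdot_{\alpha^{-1}}t$ with $[[x,y],\alpha(z)]\cdot\alpha^{2}(t)$, $y\cdot_{\alpha^{-1}}([x,z]_{\alpha^{-1}}\cdot_{\alpha^{-1}}t)$ with $\alpha^{2}(y)\cdot([x,z]\cdot\alpha(t))$, and the last two doubly-nested terms with $\alpha^{2}(x)\cdot(\alpha(y)\cdot(z\cdot t))$ and $\alpha^{2}(z)\cdot(\alpha(x)\cdot(y\cdot t))$ respectively (signs preserved). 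Summing, the five contributions combine into $\alpha^{-3}\bigl(HPM(x,y,z,t)\bigr)=0$, which proves the pre-Malcev identity for $\cdot_{\alpha^{-1}}$.

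The only real obstacle is the bookkeeping: the five summands of \eqref{PM} have different bracketing and nesting patterns, so one has to verify that after pulling out $\alpha^{-1}$'s the accumulated powers, and the internal $\alpha$-twists inserted by multiplicativity, align to a common outer $\alpha^{-3}$ and match the twists prescribed in \eqref{HPM}. Once this alignment is established for one term it is mechanical for the rest, and the theorem follows immediately from the Hom-pre-Malcev identity applied to $(x,y,z,t)$.
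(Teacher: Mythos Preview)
Your proposal is correct and follows essentially the same argument as the paper: both proofs rewrite each of the five summands of the pre-Malcev identity \eqref{PM} for $\cdot_{\alpha^{-1}}$ as $\alpha^{-3}$ applied to the corresponding summand of the Hom-pre-Malcev identity \eqref{HPM}, and then conclude by invoking \eqref{HPM}. Your explicit remark that multiplicativity of $\alpha$ (hence of $\alpha^{-1}$) is needed for this rewriting is accurate and is used, though not stated, in the paper's proof as well.
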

 \begin{proof}
 We just show that $(A, \cdot_{\alpha^{-1}})$ satisfies the
identity \eqref{PM} while $(A, \cdot,\alpha)$
satisfies the  identity \eqref{HPM}. Indeed,
\begin{align*}
[y,z]_{\alpha^{-1}}\cdot_{\alpha^{-1}} x \cdot_{\alpha^{-1}} t &= \alpha^{-3}\big(\alpha([y,z])\cdot\alpha(x\cdot t)\big), \\
[[x,y]_{\alpha^{-1}},z]_{\alpha^{-1}} \cdot_{\alpha^{-1}}t&= \alpha^{-3}([[x,y], \alpha(z)]\cdot \alpha^{2}(t)),\\
y \cdot_{\alpha^{-1}} ([x,z]_{\alpha^{-1}}\cdot_{\alpha} t) &= \alpha^{-3}(\alpha^{2}(y)\cdot ([x,z]\cdot \alpha(t))), \\
  x \cdot_{\alpha^{-1}} (y  \cdot_{\alpha^{-1}} (z \cdot_{\alpha^{-1}} t))&=\alpha^{-3}(\alpha^{2}(x)\cdot (\alpha(y) \cdot (z \cdot t))), \\
z \cdot_{\alpha^{-1}} (x \cdot_{\alpha^{-1}} (y \cdot_{\alpha^{-1}} t)) &= \alpha^{-3}(\alpha^{2}(z)\cdot (\alpha(x)\cdot(y\cdot t))). \qedhere
\end{align*}
\end{proof}
In particular, the theorem is valid when $\alpha=Id.$

In the following starting from a Hom-pre-Malcev algebra and a Hom-pre-Malcev algebra endomorphism, we contructt a new Hom-pre-Malcev algebra. We say that is obtained by twisting principale or composition method.
\begin{prop}
  Let $(A, \cdot, \alpha)$ be a Hom-pre-Malcev algebra, and let $\gamma : A\rightarrow A$ be a Hom-pre-Malcev algebra endomorphism. Then
  $A_\gamma=(A, \cdot_\gamma, \gamma\alpha)$, where $x\cdot_\gamma y=\gamma(x\cdot y)$, is a Hom-pre-Malcev algebra.

Moreover, suppose that $(\mathcal{A}', \cdot')$ is a pre-Malcev algebra and $\alpha': A\rightarrow A'$ is a pre-Malcev algebra endomorphism. If $f : A\rightarrow A'$ is a pre-Malcev algebra morphism that satisfies $f\circ\gamma=\alpha'\circ f$, then
$$f :  (A, \cdot_\gamma, \beta\alpha)\rightarrow
(\mathcal{A}', \cdot_{\alpha'}', \alpha')$$
is a morphism of Hom-pre-Malcev algebras.
\end{prop}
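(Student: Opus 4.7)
The plan is to apply the twisting principle (going back to Yau): for the first claim, I will check that each of the five summands in the Hom-pre-Malcev identity \eqref{HPM}, evaluated on the twisted datum $(A,\cdot_\gamma,\gamma\alpha)$, equals $\gamma^{3}$ applied to the corresponding summand of \eqref{HPM} for the original datum $(A,\cdot,\alpha)$; the identity then follows from $\gamma^{3}(0)=0$.

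First I would record two simple consequences of $\gamma$ being a Hom-pre-Malcev endomorphism of $(A,\cdot,\alpha)$, namely $\gamma(x\cdot y)=\gamma(x)\cdot\gamma(y)$ and $\gamma\circ\alpha=\alpha\circ\gamma$. From the first, $[x,y]_{\cdot_\gamma}=x\cdot_\gamma y-y\cdot_\gamma x=\gamma([x,y])$, and more generally $\gamma$ is multiplicative with respect to both $\cdot_\gamma$ and $[-,-]_{\cdot_\gamma}$. A term-by-term computation, for instance
\[
[(\gamma\alpha)(y),(\gamma\alpha)(z)]_{\cdot_\gamma}\cdot_\gamma(\gamma\alpha)(x\cdot_\gamma t)
=\gamma^{3}\bigl([\alpha(y),\alpha(z)]\cdot\alpha(x\cdot t)\bigr),
\]
and the four analogous identities for the remaining summands of \eqref{HPM}, will give the claim. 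In each case, each outer $\cdot_\gamma$ contributes one factor of $\gamma$; each occurrence of $\gamma\alpha$ appearing in the arguments is normalized to $\alpha$ times a factor of $\gamma$ via $\gamma\alpha=\alpha\gamma$; finally the multiplicativity of $\gamma$ pulls the remaining $\gamma$'s outside. Summing yields $\gamma^{3}$ of the original Hom-pre-Malcev identity, which vanishes.

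For the morphism statement, I would check the two defining conditions directly. The compatibility with the twists reads $f\circ(\gamma\alpha)=\alpha'\circ f$; since the pre-Malcev structure of $\mathcal{A}'$ is implicitly viewed with trivial twist (so that the output structure is $(\mathcal{A}',\cdot'_{\alpha'},\alpha')$), this collapses to $f\circ\gamma=\alpha'\circ f$, which is exactly the hypothesis. The compatibility with products is the one-line calculation
\[
f(x\cdot_\gamma y)=f(\gamma(x\cdot y))=\alpha'(f(x\cdot y))=\alpha'(f(x)\cdot'f(y))=f(x)\cdot'_{\alpha'}f(y),
\]
using $f\circ\gamma=\alpha'\circ f$ and the fact that $f$ is a pre-Malcev morphism for the underlying (untwisted) products.

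The only real obstacle is the bookkeeping in the first step: the five summands of \eqref{HPM}, each involving multiple nested applications of $\cdot$, $\alpha$ and the bracket, must be tracked carefully so that the same total power $\gamma^{3}$ is extracted from every term. No new identity or auxiliary hypothesis beyond multiplicativity of $\gamma$ and the commutation $\gamma\alpha=\alpha\gamma$ is needed.
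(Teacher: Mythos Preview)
Your proposal is correct and follows essentially the same approach as the paper's proof: both arguments extract a common factor of $\gamma^{3}$ from each of the five summands of the twisted Hom-pre-Malcev identity using multiplicativity of $\gamma$ and the commutation $\gamma\alpha=\alpha\gamma$, and both verify the second (morphism) claim by the identical one-line computation $f(x\cdot_\gamma y)=f(\gamma(x\cdot y))=\alpha'(f(x\cdot y))=\alpha'(f(x)\cdot'f(y))=f(x)\cdot'_{\alpha'}f(y)$. The paper's proof in fact omits the twist-compatibility check $f\circ(\gamma\alpha)=\alpha'\circ f$ that you flag; your observation that this reduces to the hypothesis $f\circ\gamma=\alpha'\circ f$ only when the domain's original twist is trivial is correct, and reflects an ambiguity in the statement itself (note the stray $\beta\alpha$ and the ``endomorphism'' $\alpha':A\to A'$), which is evidently intended to specialize to the untwisted pre-Malcev case on the domain side as well.
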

\begin{proof}
For all $x, y, z\in A$,
\begin{align*}
&\gamma\alpha([y,z]_{\gamma})\cdot_{\gamma} \gamma\alpha(x \cdot_{\gamma} t)+ [[x,y]_{\gamma},\gamma\alpha(z)]_{\gamma} \cdot_{\gamma}(\gamma\alpha)^{2}(t)\\
&=\gamma^{3}\big(\alpha([y,z])\cdot \alpha(x\cdot t)\big)+\gamma^{3}([[x,y], \alpha(z)]\cdot \alpha^{2 }(t))\\
&=\gamma^{3}\big(\alpha(^{2}(y) \cdot ([z,x]\cdot \alpha (t))+ \alpha^{2}(x) \cdot (\alpha (y)  \cdot (z \cdot t)) - \alpha^{2}(z) \cdot (\alpha (x) \cdot (y \cdot t))\big)\\
&=(\gamma\alpha)^{2}(y) \cdot_{\gamma} ([z,x]_{\gamma}\cdot_{\gamma} \gamma\alpha (t))+(\gamma\alpha)^{2}(x) \cdot_{\gamma} (\gamma\alpha (y)  \cdot_{\gamma} (z \cdot_{\gamma} t)) - (\gamma\alpha)^{2}(z) \cdot_{\gamma} (\gamma\alpha (x) \cdot_{\gamma} (y \cdot_{\gamma} t)).
  \end{align*}
For the second part,
$
f(x\cdot_\gamma y)=f(\gamma(x\cdot y))=\alpha'( f(x\cdot y))
=\alpha'(f(x)\cdot'f(y))=f(x)\cdot'_{\alpha'} f(y).
$
\end{proof}
\begin{ex}
Let $(A, \cdot)$  be a pre-Malcev algebra and  $\alpha : A \to A$ be a morphism of $A$.
Then $A_{\alpha} = (A, \cdot_{\alpha}, \alpha)$ is a multiplicative Hom-pre-Malcev algebra.
\end{ex}
the following Proposition shows that  any Hom-pre-Malcev algebra is Hom-Malcev admissible.
\begin{prop} \label{prop:HompreMalcevHomMalcevadmis}
 Let $(A, \cdot, \alpha)$ be a Hom-pre-Malcev algebra.
 The commutator given, for all $x,\ y\in A$, by
 \begin{align}\label{commutator}
 [x, y] = x \cdot y - y \cdot x,
 \end{align}
  defines a Hom-Malcev algebra structure on $A$.
 \end{prop}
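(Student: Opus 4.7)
The plan is to verify the two defining features of a Hom-Malcev algebra for the commutator $[x,y]=x\cdot y - y\cdot x$: anti-symmetry and the Hom-Malcev identity. Anti-symmetry is immediate from the definition, so the substance is entirely in showing the Hom-Malcev identity. I would work with the equivalent form \eqref{Hom-Malcev}, namely
\begin{equation*}
[\alpha([x,z]), \alpha([y,t])] = [[[x,y], \alpha(z)], \alpha^{2}(t)] + [[[y,z], \alpha(t)], \alpha^{2}(x)] + [[[z,t], \alpha(x)], \alpha^{2}(y)] + [[[t,x], \alpha(y)], \alpha^{2}(z)],
\end{equation*}
since its manifest symmetry in four arguments $x,y,z,t$ is the natural target for comparison with \eqref{HPM}, which is itself a four-variable identity.

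First, I would introduce the quantity
\begin{equation*}
M(x,y,z,t) := [\alpha([x,z]), \alpha([y,t])] - \sum_{\text{cyc}(x,y,z,t)} [[[x,y], \alpha(z)], \alpha^{2}(t)],
\end{equation*}
and fully expand every commutator $[a,b]=a\cdot b-b\cdot a$ so that $M(x,y,z,t)$ becomes a signed sum of iterated products of the form $(u\cdot v)\cdot\alpha(w)\cdot\alpha^{2}(s)$ or $\alpha^{2}(u)\cdot(\alpha(v)\cdot(w\cdot s))$ and their permutations. Next, I would compare this with the expanded expression \eqref{HPMexpanded} for $HPM$ and try to write
\begin{equation*}
M(x,y,z,t) = \sum_{\sigma} \varepsilon_\sigma\, HPM(\sigma(x),\sigma(y),\sigma(z),\sigma(t))
\end{equation*}
for a suitable signed combination of permutations $\sigma$. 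Each term $HPM(\cdot,\cdot,\cdot,\cdot)$ vanishes by the defining identity of Hom-pre-Malcev algebras, so the identity \eqref{Hom-Malcev} would follow.

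To guess the right combination, I would exploit symmetry: $M(x,y,z,t)$ is (up to cancellation of the cyclic sum) antisymmetric under $x\leftrightarrow z$ and $y\leftrightarrow t$, and invariant under the simultaneous swap $(x,z)\leftrightarrow (y,t)$; so only permutations of $HPM$ respecting these symmetries can contribute. In practice this reduces the search to a short list of about eight permutations, whose signed sum can be pinned down by checking a handful of typical monomials such as $\alpha^{2}(x)\cdot(\alpha(y)\cdot(z\cdot t))$ that appear exactly once. The $\alpha$-twists in \eqref{HPM} exactly match the $\alpha$ and $\alpha^{2}$ weights appearing in the expansion of $M$, which is reassuring and also forced by the multiplicativity implicit in the bracket structure.

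The main obstacle will be bookkeeping: the expansion of $M(x,y,z,t)$ produces on the order of forty monomials (the first summand contributes sixteen, each of the four cyclic summands contributes eight), and each $HPM(\sigma(x),\sigma(y),\sigma(z),\sigma(t))$ expansion involves ten monomials, so identifying the correct signed combination of $HPM$'s amounts to solving a combinatorial matching problem. Since the classical pre-Malcev $\Rightarrow$ Malcev statement (the case $\alpha=\mathrm{Id}_A$) is known \cite{Madariaga} and follows the same template, the Hom-version is obtained by inserting $\alpha$'s consistently throughout the same calculation; this parallel would serve as a guide and verification that no term is lost. Once the combinatorial identity $M=\sum\varepsilon_\sigma HPM\circ\sigma$ is established, \eqref{HPM} yields $M(x,y,z,t)=0$, completing the proof.
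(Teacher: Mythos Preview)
Your approach is exactly the one the paper takes: expand the difference $M(x,y,z,t)$ arising from \eqref{Hom-Malcev} and exhibit it as a sum of instances of $HPM$; the paper finds the explicit decomposition $M(x,y,z,t)=HPM(x,t,y,z)+HPM(y,x,z,t)+HPM(z,y,t,x)+HPM(t,z,x,y)$. Two minor quibbles: the first summand $[\alpha([x,z]),\alpha([y,t])]$ contributes $8$ monomials (not $16$) upon full expansion, and no multiplicativity of $\alpha$ is needed or used---the paper keeps $\alpha$ outside throughout (e.g.\ $\alpha([x,z])\cdot\alpha(y\cdot t)$), which is consistent with the form of \eqref{HPMexpanded}.
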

\begin{proof}
We show that the commutator \eqref{commutator} satisfies
the identity \eqref{Hom-Malcev}. For $x, y, z, t \in A$,
\begin{align*}
&[\alpha ([x, z]), \alpha([y, t])] - [[[x, y], \alpha (z)], \alpha^{2} (t)] - [[[y, z], \alpha(t)], \alpha^{2}(x)] \\
&\quad - [[[z, t], \alpha (x)], \alpha^{2}(y)]- [[[t, x], \alpha (y)], \alpha^{2}(z)]\\
&=\alpha([x,z]) \cdot \alpha(y \cdot t) - \alpha ([x,z]) \cdot \alpha(t \cdot y) - \alpha([y, t]) \cdot \alpha(x \cdot z)\\
  &\quad+ \alpha([y, t]) \cdot \alpha(z \cdot x)-  [[x, y], \alpha(z)] \cdot \alpha^{2}(t)+ \alpha^{2}(t) \cdot ([x, y] \cdot \alpha (z)) \\
&\quad- \alpha^{2}(t) \cdot (\alpha (z) \cdot (x \cdot y))+ \alpha^{2}(t) \cdot (\alpha(z) \cdot (y \cdot x))- [[y, z], \alpha (t)] \cdot \alpha^{2}(x)\\
&\quad+ \alpha^{2}(x) \cdot ([y, z] \cdot \alpha(t)) - \alpha^{2}(x) \cdot (\alpha(t) \cdot (y \cdot z))+ \alpha^{2}(x) \cdot (\alpha(t) \cdot (z \cdot y))\\
 &\quad- [[z, t], \alpha(x)] \cdot \alpha^{2}(y) + \alpha^{2}(y) \cdot ([z, t] \cdot \alpha(x))- \alpha^{2}(y) \cdot (\alpha(x) \cdot (z \cdot t))\\
 & \quad+ \alpha^{2}(y) \cdot (\alpha(x) \cdot (t \cdot z))- [[t, x], \alpha(y)] \cdot \alpha^{2}(z)+ \alpha^{2}(z) \cdot ([t, x] \cdot \alpha(y))\\
 &\quad- \alpha^{2}(z) \cdot (\alpha(y) \cdot (t \cdot x))+ \alpha^{2}(z) \cdot (\alpha(y) \cdot (x \cdot t))\\
&= HPM(x, t, y, z)  + HPM(y, x, z, t) + HPM(z, y, t, x)  + HPM(t, z, x, y) = 0.
\qedhere   \end{align*}
 \end{proof}

\begin{defn}
The Hom-Malcev algebra structure in Proposition \ref{prop:HompreMalcevHomMalcevadmis} is called the associated Hom-Malcev algebra of the Hom-pre-Malcev algebra $(A,\cdot, \alpha)$, and the  Hom-pre-Malcev algebra $(A, \cdot, \alpha)$ is called a compatible Hom-pre-Malcev algebra on the Hom-Malcev algebra $(A, [-, -], \alpha)$.
\end{defn}
The next result says that every multiplicative Hom-pre-Malcev algebra gives rise to an
infinite sequence of multiplicative Hom-pre-Malcev algebras and multiplicative Hom-Malcev algebras.
\begin{cor}
Let $(A, \cdot, \alpha)$ be a multiplicative Hom-pre-Malcev algebra. Then,
\begin{enumerate}
\item For $n\geq 0$,
$A^n=(A, \cdot^{(n)}, \alpha^{2^n})$ where
$x \cdot^{(n)} y=\alpha^{2^n-1}(x\cdot y),$
is a multiplicative Hom-pre-Malcev algebra, called the $n\mathrm{th}$ derived multiplicative Hom-pre-Malcev algebra.
\item For $n\geq0$,
$A^n=(A, [-,-]^{(n)}, \alpha^{2^n})$, where
$$[x,y]^{(n)}=\alpha^{2^n-1}(x\cdot y-y\cdot x)$$
is a multiplicative Hom-Malcev algebra, called the $n\mathrm{th}$ derived multiplicative Hom-Malcev algebra.
\end{enumerate}
\end{cor}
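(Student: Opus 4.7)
The plan is to prove both parts by induction on $n$, leveraging the composition/twisting proposition for Hom-pre-Malcev algebras established just before the corollary and Proposition \ref{prop:HompreMalcevHomMalcevadmis}. For $n = 0$ both statements are trivial, since $\cdot^{(0)} = \cdot$, $[-,-]^{(0)} = [-,-]$, and $\alpha^{2^{0}} = \alpha$, returning the original multiplicative Hom-pre-Malcev algebra and its associated Hom-Malcev algebra given by Proposition \ref{prop:HompreMalcevHomMalcevadmis}. The inductive step rests on the single observation
\[
x \cdot^{(n+1)} y \;=\; \alpha^{2^{n+1}-1}(x\cdot y) \;=\; \alpha^{2^n}\bigl(\alpha^{2^n-1}(x\cdot y)\bigr) \;=\; \alpha^{2^n}\bigl(x \cdot^{(n)} y\bigr),
\]
so the passage from $A^n$ to $A^{n+1}$ is exactly the composition construction with twisting map $\gamma := \alpha^{2^n}$; note that the new twist is $\gamma\circ \alpha^{2^n} = \alpha^{2^{n+1}}$, matching the claim.

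For part (i), assuming inductively that $A^n = (A, \cdot^{(n)}, \alpha^{2^n})$ is a multiplicative Hom-pre-Malcev algebra, I would verify that $\gamma = \alpha^{2^n}$ is a Hom-pre-Malcev endomorphism of $A^n$. The compatibility $\gamma \circ \alpha^{2^n} = \alpha^{2^n}\circ \gamma$ is automatic, and the multiplicativity $\gamma(x \cdot^{(n)} y) = \gamma(x) \cdot^{(n)} \gamma(y)$ follows because both sides collapse to $\alpha^{2^{n+1}-1}(x\cdot y)$ by the multiplicativity of the original $\alpha$. The composition proposition then delivers $A^{n+1} = (A, \cdot^{(n)}_{\gamma}, \gamma\alpha^{2^n})$ as a Hom-pre-Malcev algebra, and multiplicativity of $\alpha^{2^{n+1}}$ with respect to $\cdot^{(n+1)}$ is a short calculation combining multiplicativity of $\alpha$ with the endomorphism property of $\gamma$.

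For part (ii), I would deduce it from (i) in one line rather than rerunning the induction. Using the multiplicativity of $\alpha$,
\[
x \cdot^{(n)} y \;-\; y \cdot^{(n)} x \;=\; \alpha^{2^n-1}(x\cdot y) \;-\; \alpha^{2^n-1}(y\cdot x) \;=\; \alpha^{2^n-1}(x\cdot y - y\cdot x) \;=\; [x,y]^{(n)},
\]
so $[-,-]^{(n)}$ is precisely the commutator of the product $\cdot^{(n)}$ on the multiplicative Hom-pre-Malcev algebra $A^n$. Proposition \ref{prop:HompreMalcevHomMalcevadmis} applied to $A^n$ therefore gives that $(A, [-,-]^{(n)}, \alpha^{2^n})$ is a Hom-Malcev algebra, and multiplicativity transfers at once from multiplicativity of $A^n$.

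The only real obstacle is the bookkeeping of the exponents $2^n-1$ and $2^n$ in the inductive step, particularly confirming that the endomorphism check for $\gamma = \alpha^{2^n}$ collapses exactly into the original multiplicativity of $\alpha$. Beyond this accounting, the defining Hom-pre-Malcev identity \eqref{HPM} is never reopened, because the composition/twisting proposition absorbs all the algebraic content, and part (ii) reduces to a single invocation of Proposition \ref{prop:HompreMalcevHomMalcevadmis}.
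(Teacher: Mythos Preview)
Your proof is correct and follows precisely the route the paper intends: the corollary is stated without proof in the paper, but its placement immediately after the twisting/composition proposition and Proposition~\ref{prop:HompreMalcevHomMalcevadmis} makes clear that it is meant to be deduced from them exactly as you do. Your inductive application of the composition proposition with $\gamma=\alpha^{2^n}$ for part~(i), together with the observation that $[-,-]^{(n)}$ is the commutator of $\cdot^{(n)}$ for part~(ii), is the natural unpacking of this corollary.
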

A Hom-pre-Malcev algebra can be viewed as a Hom-Malcev algebra whose
operation decomposes into two compatible pieces.

Examples of Hom-pre-Malcev algebras can be constructed from Hom-Malcev algebras with Kupershmidt operators. Let $(A,[-,-],\alpha)$ be a Hom-Malcev algebra  and $T : V \to A$ be a Kupershmidt operator of \textsf{Hom-MalcevRep} pair  $(A,[-,-],\alpha,\varrho,\beta)$. Define the product $"\cdot"$ by
\begin{equation}\label{eq:hommalcTohompremalc}
a\cdot b= \varrho(T(a))b, \ \forall\ a, \ b \in V.
\end{equation}

\begin{prop}\label{hommalcev==>hompremalcev}
With the above notations $(V,\cdot, \beta)$ is a Hom-pre-Malcev algebra, and there exists an associated Hom-Malcev algebra structure on $V$ given by \eqref{commutator} and $T$
is a homomorphism of Hom-Malcev algebras. Furthermore, $T(V) = \{T(a),\ a \in V \}\subset A$ is a Hom-Malcev
subalgebra of $(A, [-, -], \alpha)$ and $T (V)$ is a Hom-pre-Malcev algebra structure given,
for all $a, b \in V,$ by $T(a)\cdot T(b) = T(a\cdot b). $
Moreover, the corresponding associated Hom-Malcev algebra structure on  $T (V)$ given by \eqref{commutator} is just a
Hom-Malcev subalgebra  structure of $(A, [-, -], \alpha)$, and $T$ is a morphism of Hom-pre-Malcev algebras.
\end{prop}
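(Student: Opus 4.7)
The plan is to first verify that $(V,\cdot,\beta)$ is a Hom-pre-Malcev algebra and then deduce the remaining statements by transporting the structure along $T$. Two preliminary facts will be used throughout. First, the Kupershmidt condition $\alpha\circ T=T\circ\beta$ gives $T\circ\beta^{k}=\alpha^{k}\circ T$ for all $k\ge 0$. Second, if one sets $[a,b]:=a\cdot b-b\cdot a=\varrho(T(a))b-\varrho(T(b))a$, the Kupershmidt identity reads $T([a,b])=[T(a),T(b)]$; combined with $\varrho(\alpha(x))\beta=\beta\varrho(x)$ from \eqref{rephommalcev}, it also yields $[\beta(a),\beta(b)]=\beta([a,b])$ in $V$.

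To verify \eqref{HPM} for $(V,\cdot,\beta)$, I would substitute $u\cdot w=\varrho(T(u))w$ into each of the five summands and evaluate on an arbitrary $d\in V$. Using the two facts above, every outer $T$ of a commutator in $V$ becomes the corresponding bracket in $A$ of elements in $T(V)$, and every $T\circ\beta^{k}$ becomes $\alpha^{k}\circ T$. After this rewriting, the summand coming from $[[a,b],\beta(c)]\cdot\beta^{2}(d)$ matches the left-hand side of the representation identity \eqref{representation H-M} applied with $x=T(a),y=T(b),z=T(c)$ evaluated on $d$; the remaining four summands reproduce, up to signs controlled by the antisymmetry of $[-,-]$ and by the intertwining $\beta\varrho(x)=\varrho(\alpha(x))\beta$, the four terms on the right-hand side of that identity, so the total collapses to zero. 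This matching of five summands versus the four terms of \eqref{representation H-M} is the main (and only significant) technical step.

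Once $(V,\cdot,\beta)$ is known to be a Hom-pre-Malcev algebra, Proposition~\ref{prop:HompreMalcevHomMalcevadmis} equips $V$ with its associated Hom-Malcev structure via \eqref{commutator}, and $T$ becomes a morphism of Hom-Malcev algebras in view of $T\circ\beta=\alpha\circ T$ and $T([a,b])=[T(a),T(b)]$. Hence $T(V)\subseteq A$ is closed under both the bracket and $\alpha$, so it is a Hom-Malcev subalgebra of $(A,[-,-],\alpha)$. To equip $T(V)$ with the Hom-pre-Malcev product $T(a)\cdot T(b):=T(a\cdot b)=T(\varrho(T(a))b)$, one must verify well-definedness: for any $k\in\ker T$ the Kupershmidt identity yields $0=[T(a),T(k)]=T(\varrho(T(a))k)$, hence $\varrho(T(a))(\ker T)\subseteq\ker T$; together with the obvious fact that $\varrho(T(a))b$ depends only on $T(a)$, this shows $T(a\cdot b)$ depends only on $T(a)$ and $T(b)$. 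The Hom-pre-Malcev identity on $T(V)$ then follows by applying $T$ to the identity already proved on $V$, the induced commutator on $T(V)$ equals $T(a\cdot b-b\cdot a)=[T(a),T(b)]$ and so coincides with the restriction of $[-,-]$ from $A$, and finally the equalities $T(a\cdot b)=T(a)\cdot T(b)$ and $T\circ\beta=\alpha\circ T$ make $T$ a morphism of Hom-pre-Malcev algebras.
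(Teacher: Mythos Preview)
Your approach is essentially the same as the paper's: both reduce the Hom-pre-Malcev identity on $(V,\cdot,\beta)$ to the representation identity \eqref{representation H-M} evaluated at $x=T(a)$, $y=T(b)$, $z=T(c)$ after pushing $T$ through commutators and $\beta$'s, and then derive the remaining claims from $T([a,b])=[T(a),T(b)]$ and $T\circ\beta=\alpha\circ T$. Your treatment is in fact more careful in one place: you check that the product $T(a)\cdot T(b):=T(a\cdot b)$ on $T(V)$ is well defined (via $\varrho(T(a))(\ker T)\subseteq\ker T$), a point the paper passes over with ``the other statements follow immediately.''
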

\begin{proof}
By the identity of Kupershmidt operator \eqref{ophommalcev},
$$T([a,b]_{T})=T(a\cdot b-b\cdot a)=[T(a), T(b)].$$
Thanks to \eqref{representation H-M}, for any $a,b,c,d\in V$,
\begin{align*}
&[\beta(b),\beta(c)]_{T } \cdot \beta(a \cdot d) + [[a ,b]_{T } , \beta(c)]_{T } \cdot \beta^{2}(d) + \beta^{2}(b) \cdot ([a , c]_{T} \cdot \beta (d))\\
 &\quad- \beta^{2}(a) \cdot (\beta (b)  \cdot (c \cdot d)) + \beta^{2}(c) \cdot (\beta (a) \cdot (b \cdot d))\\
 &= \varrho(\alpha([T(b), T(c)]))\varrho(\alpha(T(a)))\beta(d)+\varrho([[T(a), T(b)], \alpha(T(c))])\beta^{2}(d)\\
  &\quad+\varrho(\alpha^{2}(T(b)))\varrho([T(a), T(c)])\beta(d)-\varrho(\alpha^{2}(T(a)))\varrho(\alpha(T(b)))\varrho(T(c))d\\
  &\quad+\varrho(\alpha^{2}(T(c)))\varrho(\alpha(T(a)))\varrho(T(b))d= 0.
\end{align*}
So, $(V,\cdot,\beta)$ is a Hom-pre-Malcev algebra. The other statements follow immediately.
\end{proof}
An obvious consequence of Proposition \ref{hommalcev==>hompremalcev} is the construction of a Hom-pre-Malcev algebra in terms of a Rota-Baxter operator of weight zero of a Hom-Malcev
algebra.
\begin{cor}
Suppose that $\mathcal{R}: A \longrightarrow A$ is a Rota-Baxter operator on a Hom-Malcev algebra
$(A,[-,-],\alpha)$. Then $(A, \cdot, \alpha)$ is a Hom-pre-Malcev algebra, where for all $x, y \in A,$
$$x \cdot y = [\mathcal{R}(x), y].$$
\end{cor}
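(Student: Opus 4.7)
The plan is to recognize that this corollary is essentially a specialization of Proposition \ref{hommalcev==>hompremalcev} to the adjoint representation. The key observation, already recorded in the earlier example of the paper, is that a Rota-Baxter operator of weight $0$ on a Hom-Malcev algebra $(A,[-,-],\alpha)$ is precisely a Kupershmidt operator on the \textsf{Hom-MalcevRep} pair $(A,[-,-],\alpha,\mathrm{ad},\alpha)$, where $V=A$, $\beta=\alpha$, and $\varrho=\mathrm{ad}$ is the adjoint representation exhibited earlier.

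First I would invoke that identification: check that $\mathcal{R}\circ\alpha=\alpha\circ\mathcal{R}$ is precisely the condition $\alpha\circ T=T\circ\beta$ of Definition \ref{df:ophommalcalg}, and that the Rota-Baxter identity $[\mathcal{R}(x),\mathcal{R}(y)]=\mathcal{R}([\mathcal{R}(x),y]+[x,\mathcal{R}(y)])$ matches $[T(a),T(b)]=T(\varrho(T(a))b-\varrho(T(b))a)$ when $\varrho=\mathrm{ad}$, since $\mathrm{ad}(\mathcal{R}(x))(y)-\mathrm{ad}(\mathcal{R}(y))(x)=[\mathcal{R}(x),y]-[\mathcal{R}(y),x]=[\mathcal{R}(x),y]+[x,\mathcal{R}(y)]$.

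Next, I would apply Proposition \ref{hommalcev==>hompremalcev} with this data. The formula \eqref{eq:hommalcTohompremalc} then specializes to
\[
x\cdot y=\varrho(T(x))y=\mathrm{ad}(\mathcal{R}(x))(y)=[\mathcal{R}(x),y],
\]
and the conclusion that $(V,\cdot,\beta)$ is a Hom-pre-Malcev algebra translates directly to the statement that $(A,\cdot,\alpha)$ is a Hom-pre-Malcev algebra for the product $x\cdot y=[\mathcal{R}(x),y]$. No new verification of identity \eqref{HPM} is required, since all of the work has already been carried out in the proof of Proposition \ref{hommalcev==>hompremalcev}.

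There is no real obstacle here: the whole argument is a matter of unwinding definitions and citing the preceding proposition. The only point that needs a brief sentence of justification is the compatibility with $\alpha$, i.e.\ that $\alpha$ twists the product correctly; but this follows immediately from $\alpha\circ\mathcal{R}=\mathcal{R}\circ\alpha$ combined with the multiplicativity of $[-,-]$ when applicable, and in any case it is already built into the hypotheses of Proposition \ref{hommalcev==>hompremalcev}.
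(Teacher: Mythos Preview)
Your proposal is correct and matches the paper's own treatment: the paper introduces this corollary as ``an obvious consequence of Proposition~\ref{hommalcev==>hompremalcev}'' without further proof, and your argument is precisely the specialization of that proposition to the adjoint representation via the earlier example identifying Rota-Baxter operators with Kupershmidt operators for $(A,[-,-],\alpha,\mathrm{ad},\alpha)$.
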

\begin{ex}
\label{ex:4dmalcev}
There is a four-dimensional Malcev algebra $(A,[-,-])$  with multiplication table {\rm \cite[Example 3.1]{Sagle}} for a basis $\{e_1,e_2,e_3,e_4\}$,
\begin{center}
\begin{tabular}{c|cccc}
$[-,-]$ & $e_1$ & $e_2$ & $e_3$ & $e_4$ \\ \hline
$e_1$ & $0$ & $-e_2$ & $-e_3$ & $e_4$ \\
$e_2$ & $e_2$ & $0$ & $2e_4$ & $0$ \\
$e_3$ & $e_3$ & $-2e_4$ & $0$ & $0$ \\
$e_4$ & $-e_4$ & $0$ & $0$ & $0$ \\
\end{tabular}
\end{center}
Let $\mathcal{R}$ be the operator defined, with respect to the basis $\{e_1,e_2,e_3, e_{4}\}$, by
\begin{align*}
    \mathcal{R}(e_1)= e_1 + \frac{a_{4}}{2}e_4,\ \mathcal{R}(e_2)=\lambda_1 e_3,\ \mathcal{R} (e_3)= \mathcal{R}(e_4) = 0,
\end{align*}
where $a_4$ and $\lambda_1$ are parameters in $\mathbb{K}$. By a direct computation, we can verify that $\mathcal{R}$ is a Rota-Baxter operator on $A$.

Now, using  the previous corollary,   there is a pre-Malcev algebra structure
on $A$ with the multiplication $"\cdot"$ given for all $x,y \in A$ by
$x\cdot y= [\mathcal{R}(x),  y]$, that is
$$
\begin{array}{c|cccc}
  \cdot  & e_1 & e_2 & e_3 &  e_4 \\   \hline
  e_1& \frac{-a_4}{2} e_4 & -e_2  & - e_3 & e_4 \\
  e_2 & \lambda_1 e_3 & -2\lambda_{1} e_4 &  0 & 0\\
  e_3 & 0 & 0  & 0 & 0\\
  e_4 & 0 & 0  & 0 & 0
  \end{array}$$
Using suitable algebra morphism $\alpha$, we can twist the Malcev algebra $A$ into  multiplicative Hom-Malcev algebras.
With a bit of computation, one can check that one class of algebra morphisms $\alpha \colon A \to A$ is given by
\[
\alpha(e_1) = e_1 + a_4e_4,\quad
\alpha(e_2) = -e_2 + b_3e_3,\quad
\alpha(e_3) = -e_3,\quad
\alpha(e_4) = -e_4,\quad
\]
where $a_4$ and $b_3$ are arbitrary scalars in $\mathbb{K}$.

There is a multiplicative Hom-Malcev algebra $A_{\alpha} = (A,[-,-]_{\alpha} = \alpha\circ[-,-],\alpha)$
with multiplication table
\begin{center}
\begin{tabular}{c|cccc}
$[-,-]_{\alpha}$ & $e_1$ & $e_2$ & $e_3$ & $e_4$ \\ \hline
$e_1$ & $0$ & $-\alpha(e_2)$ & $e_3$ & $-e_4$ \\
$e_2$ & $\alpha(e_2)$ & $0$ & $ -2e_4$ & $0$ \\
$e_3$ & $-e_3$ & $2e_4$ & $0$ & $0$ \\
$e_4$ & $e_4$ & $0$ & $0$ & $0$ \\
\end{tabular}
\end{center}
Then we can check that $\mathcal{R}$ is a Rota-Baxter operator on Hom-Malcev algebra $A_{\alpha}$. Therefore, there exists a  multiplicative Hom-pre-Malcev algebra $ A_\alpha=(A,\cdot_{\alpha} ,\alpha)$  where  the  multiplication $"\cdot_{\alpha}"$ is  given by
$x\cdot_{\alpha} y = \alpha([\mathcal{R}(x), y]),~~
\forall \  x,y \in A,$
  that is
$$
\begin{array}{c|cccc}
  \cdot_{\alpha}  & e_1 & e_2 & e_3 &  e_4 \\ \hline
  e_1& \frac{a_4}{2} e_4 & -\alpha(e_2)  & e_3 & -e_4 \\
  e_2 & -\lambda_1 e_3 & 2 \lambda_1 e_4 &  0 & 0\\
  e_3 & 0 & 0  & 0 & 0\\
  e_4 &  0 & 0  & 0 & 0
  \end{array}$$
\end{ex}
\begin{ex}
\label{ex:5dmalcev}
There is a five-dimensional  Malcev algebra $(A,[-,-])$ with multiplication table {\rm \cite[Example 3.4]{Sagle}} for a basis $\{e_1,e_2,e_3,e_4,e_5\}$,
\begin{center}
\begin{tabular}{c|ccccc}
$[-,-]$ & $e_1$ & $e_2$ & $e_3$ & $e_4$ & $e_5$ \\ \hline
$e_1$ & $0$ & $0$ & $0$ & $e_2$ & $0$ \\
$e_2$ & $0$ & $0$ & $0$ & $0$ & $e_3$ \\
$e_3$ & $0$ & $0$ & $0$ & $0$ & $0$ \\
$e_4$ & $-e_2$ & $0$ & $0$ & $0$ & $0$ \\
$e_5$ & $0$ & $-e_3$ & $0$ & $0$ & $0$
\end{tabular}
\end{center}
Let $\mathcal{R}$ be the operator defined, with respect to the basis $\{e_1,e_2,e_3, e_{4}, e_{5}\}$, by
\begin{align*}
    \mathcal{R}(e_1)= e_1 + a_{4}e_4 + a_{5}e_{5},\ \mathcal{R}(e_2)=b e_3,\  \mathcal{R}(e_3)= \mathcal{R}(e_5) = 0, \mathcal{R}(e_4)= \frac{-b}{a_{5}}e_{2},
\end{align*}
where $b$, $a_4$ are parameters in $\mathbb{K}$ and $a_5\in \mathbb{K}\setminus \{0\}$.
By a direct computation, we can verify that $\mathcal{R}$ is a Rota-Baxter operator on $A$.

Now, using  the previous corollary,   there is a pre-Malcev algebra structure $"\cdot"$
on $A$  given for all $x,y \in A$ by
$$x\cdot y= [\mathcal{R}(x),  y], $$ that is
$$
\begin{array}{c|ccccc}
  \cdot  & e_1 & e_2 & e_3 &  e_4 & e_5\\  \hline
  e_1& -a_4 e_2 & -a_5e_3  & 0 & e_2 & 0 \\
  e_2 & 0  & 0 &  0 & 0 & 0\\
  e_3 & 0 & 0  & 0 & 0 & 0 \\
  e_4 & 0 & 0  & 0 & 0  & \frac{-b}{a_5}e_3\\
  e_5 & 0 & 0  & 0 & 0 & 0
  \end{array}$$
Using suitable algebra morphism $\alpha$ given by
\[
\alpha(e_1) = e_1 ,\quad
\alpha(e_2) = e_2, \quad
\alpha(e_3) = e_3, \quad
\alpha(e_4) = \lambda_2e_3 + e_4,\quad
\alpha(e_5) = \frac{a_4}{a_5}\lambda_2e_3 + e_5,
\] where $\lambda_2$, $a_4$ are parameters in $\mathbb{K}$ and $a_5\in \mathbb{K}\setminus \{0\}$,
we can twist the Malcev algebra $A$ into  multiplicative Hom-Malcev algebra $A_{\alpha} = (A,[-,-]_{\alpha} = \alpha\circ[-,-],\alpha)$
with multiplication table
\begin{center}
\begin{tabular}{c|ccccc}
$[-,-]_{\alpha}$ & $e_1$ & $e_2$ & $e_3$ & $e_4$ &$e_5$ \\ \hline
$e_1$ & $0$ & $0$ & $0$ & $e_2$ & $0$\\
$e_2$ & $0$ & $0$ & $ 0$ & $0$ & $e_3$ \\
$e_3$ & $0$ & $0$ & $0$ & $0$ & $0$ \\
$e_4$ & $-e_2$ & $0$ & $0$ & $0$ & $0$\\
$e_5$ &$0$ & $-e_3$ & $0$ & $0$ & $0$
\end{tabular}
\end{center}
Then we can check that $\mathcal{R}$ is a Rota-Baxter operator on $A$. Therefore there exists a  multiplicative Hom-pre-Malcev algebras $A_\alpha=(A,\cdot_{\alpha} ,\alpha)$ with a multiplication $"\cdot_{\alpha}"$ given by
$x\cdot_{\alpha} y = \alpha([\mathcal{R}(x), y]),~~
\forall \  x,y \in A,$
 that is
$$
\begin{array}{c|ccccc}
   \cdot_{\alpha}  & e_1 & e_2 & e_3 &  e_4 & e_5\\         \hline
  e_1& -a_4 e_2 & -a_5e_3  & 0 & e_2 & 0 \\
  e_2 & 0  & 0 &  0 & 0 & 0\\
  e_3 & 0 & 0  & 0 & 0 & 0 \\
  e_4 & 0 & 0  & 0  & 0 & \frac{-b}{a_5}e_3\\
 e_5 & 0 & 0  & 0 & 0 & 0
  \end{array}$$
\end{ex}
\begin{prop}\label{pro:nsc}
   Let $(A, [-,-],\alpha)$ be a Hom-Malcev algebra. Then there exists  a compatible Hom-pre-Malcev algebra structure on $A$ if and only if there is an invertible Kupershmidt operator on $(A, [-,-],\alpha,\varrho,\beta)$.
\end{prop}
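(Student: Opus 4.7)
The plan is to prove both implications by constructing, in each direction, the object required from the data given. The backward implication exploits Proposition~\ref{hommalcev==>hompremalcev} together with a transport of structure along the invertible $T$; the forward implication identifies the identity map $\mathrm{Id}_A$ as the Kupershmidt operator and the left multiplication of the compatible Hom-pre-Malcev structure as the representation.

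Assuming first that $T:V\to A$ is an invertible Kupershmidt operator on $(A,[-,-],\alpha,\varrho,\beta)$, Proposition~\ref{hommalcev==>hompremalcev} already yields a Hom-pre-Malcev structure on $V$ via $a\cdot_V b=\varrho(T(a))b$, for which $T$ is a Hom-Malcev morphism. Since $T$ is bijective and $\alpha\circ T=T\circ\beta$, I transport this structure to $A$ by setting
$$x\cdot y:=T\bigl(T^{-1}(x)\cdot_V T^{-1}(y)\bigr),\quad x,y\in A,$$
which makes $T$ an isomorphism of Hom-algebras and so endows $(A,\cdot,\alpha)$ with the Hom-pre-Malcev identity \eqref{HPM}. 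Compatibility with $[-,-]$ then follows by writing $x\cdot y-y\cdot x=T\bigl(\varrho(x)T^{-1}(y)-\varrho(y)T^{-1}(x)\bigr)$ and invoking the Kupershmidt relation \eqref{ophommalcev} applied to $a=T^{-1}(x)$, $b=T^{-1}(y)$ to recognise the right-hand side as $[x,y]$.

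Conversely, suppose $(A,\cdot,\alpha)$ is a compatible Hom-pre-Malcev structure. I take $V=A$, $\beta=\alpha$, $T=\mathrm{Id}_A$, and $\varrho(x):=L_\cdot(x)$ (left multiplication). The two Kupershmidt identities \eqref{ophommalcev} then reduce to $\alpha=\alpha$ and to the compatibility hypothesis $[x,y]=x\cdot y-y\cdot x$, while $T=\mathrm{Id}_A$ is manifestly invertible. The substantive point is that $L_\cdot$ is a Hom-Malcev representation: testing \eqref{representation H-M} on an arbitrary $t\in A$ with $\varrho=L_\cdot$ and $\beta=\alpha$, and using the multiplicativity relations $\alpha(x\cdot t)=\alpha(x)\cdot\alpha(t)$, $\alpha([y,z])=[\alpha(y),\alpha(z)]$ together with the antisymmetry $[x,z]=-[z,x]$, one recognises the rearranged form of \eqref{HPM}; the covariance condition \eqref{rephommalcev} for $L_\cdot$ is a direct consequence of the multiplicativity of $\cdot$.

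The main obstacle is therefore purely bookkeeping in the last step, namely the term-by-term reconciliation of the five summands of the Hom-pre-Malcev identity with the four summands of the representation axiom. No new identity is required beyond multiplicativity and antisymmetry, but attention is needed to sign conventions and to keep the two multiplicativity conditions on $\alpha$ (with respect to $[-,-]$ and with respect to $\cdot$) separate.
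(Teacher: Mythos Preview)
Your proof is correct and follows essentially the same approach as the paper. For the backward implication the paper likewise invokes Proposition~\ref{hommalcev==>hompremalcev} and transports the resulting Hom-pre-Malcev structure along the invertible $T$, arriving at the same formula $x\cdot y=T(\varrho(x)T^{-1}(y))$ and the same compatibility check; for the forward implication the paper also takes $T=\mathrm{Id}_A$ with the left-multiplication representation, citing its earlier proposition that $(A,L_\cdot,\alpha)$ is a representation of the sub-adjacent Hom-Malcev algebra rather than verifying \eqref{representation H-M} directly as you do.
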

\begin{proof}
Let $(A,\cdot,\alpha)$ be a Hom-pre-Malcev algebra and $(A,[-,-],\alpha)$ be the associated Malcev algebra.  Then the identity map $id: A \to A$ is an invertible Kupershmidt operator on the Hom-MalcevRep pair  $(A,[-,-],\alpha,ad)$.

Conversely, suppose that there exists an invertible Kupershmidt operator $T$  of the Hom-MalcevRep pair  $(A,[-,-],\alpha,\varrho,\beta)$. Then, using Proposition \ref{hommalcev==>hompremalcev}, there is a Hom-pre-Malcev algebra structure on $T(V)=A$ given for all $a, b\in V$ by
\begin{equation*}
   T(a)\cdot T(b)=T(\varrho(T(a))b).
\end{equation*}
If we set $x=T(a)$ and $y=T(b)$, then we get
\begin{equation*}
   x\cdot y=T(\varrho(x)T^{-1}(y)).
\end{equation*}
This is compatible Hom-pre-Malcev algebra structure  on $(A,[-,-],\alpha)$. Indeed,
\begin{align*}
 x\cdot y-y\cdot x&= T(\varrho(x)T^{-1}(y) - \varrho(y)T^{-1}(x) )\\
&= [TT^{-1}(x),TT^{-1}(y)]=[x,y].
\qedhere
\end{align*}
\end{proof}

 Let $(A,[-,-],\alpha)$ be an any Hom-algebra and   $\omega\in\wedge^2A^*$. Recall that $\omega$ is a symplectic structure on $A$ if it satisfies
\begin{align}\label{eq:2-cocycle}
    &\omega(\alpha(x),\alpha(y)) =\omega(x,y),\quad \quad  \displaystyle\circlearrowleft_{x,y,z}\omega([x,y],\alpha(z)) =0.
\end{align}
A Hom-Malcev algebra
$(A,[-,-], \alpha)$ with a symplectic form is called a symplectic Hom-Malcev algebra.

\begin{cor}
Let $(A,[-,-],\alpha )$ be a regular Hom-Malcev algebra and  $\omega$ be a symplectic structure on  $(A,[-,-],\alpha)$.
Then there is a compatible Hom-pre-Malcev algebra structure on $A$ given by
$$
 \omega(x\cdot y,\alpha(z))=\omega(\alpha(y),[z,x] ).
$$
\end{cor}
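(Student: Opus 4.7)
The plan is to apply Proposition \ref{pro:nsc}: I will produce an invertible Kupershmidt operator $T$ from the symplectic form $\omega$ and then translate the resulting Hom-pre-Malcev product back through $\omega$ to recover the stated formula. Since $\omega$ is non-degenerate, let $T\colon A^{*}\to A$ be the inverse of the musical isomorphism $\omega^{\flat}\colon A\to A^{*}$, $x\mapsto\omega(x,-)$; equivalently, $T(\xi)$ is uniquely defined by $\omega(T(\xi),y)=\langle\xi,y\rangle$ for every $y\in A$. The relevant \textsf{Hom-MalcevRep} pair is $(A,[-,-],\alpha,ad^{\star},(\alpha^{-1})^{*})$ furnished by Corollary \ref{lem:rep}.

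First I would verify the twist compatibility $\alpha\circ T=T\circ(\alpha^{-1})^{*}$. The $\alpha$-invariance built into \eqref{eq:2-cocycle} is equivalent to $\omega(\alpha(u),y)=\omega(u,\alpha^{-1}(y))$, so
$$\omega(\alpha(T(\xi)),y)=\omega(T(\xi),\alpha^{-1}(y))=\langle\xi,\alpha^{-1}(y)\rangle=\langle(\alpha^{-1})^{*}\xi,y\rangle=\omega(T((\alpha^{-1})^{*}\xi),y),$$
and the non-degeneracy of $\omega$ closes the argument. The heart of the matter is then the Kupershmidt identity $[T(\xi),T(\eta)]=T\big(ad^{\star}(T(\xi))\eta-ad^{\star}(T(\eta))\xi\big)$. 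Pairing both sides with $\alpha(z)$ through $\omega$ and writing $x=T(\xi)$, $y=T(\eta)$, the right-hand side unfolds, using \eqref{eq:adstar}, the defining property of $T$, and the $\alpha$-invariance of $\omega$, into $\omega([x,z],\alpha(y))-\omega([y,z],\alpha(x))$, whereas the left-hand side is simply $\omega([x,y],\alpha(z))$. The desired identity therefore reduces to the cocycle condition $\circlearrowleft_{x,y,z}\omega([x,y],\alpha(z))=0$ in \eqref{eq:2-cocycle}. The principal obstacle is keeping track of the double $\alpha$-twist hidden in $ad^{\star}(x)=ad^{*}(\alpha(x))\circ(\alpha^{-2})^{*}$; these twists collapse neatly thanks to the $\alpha$-invariance of $\omega$.

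Having obtained $T$ as an invertible Kupershmidt operator, Proposition \ref{pro:nsc} yields the compatible Hom-pre-Malcev product $x\cdot y:=T\big(ad^{\star}(x)\,T^{-1}(y)\big)$ on $A$. To recognise the formula stated in the corollary, I set $\eta=T^{-1}(y)$, so that $\omega(x\cdot y,\alpha(z))=\langle ad^{\star}(x)\eta,\alpha(z)\rangle$; unpacking \eqref{eq:adstar} and applying successively the defining property of $T$, the $\alpha$-invariance of $\omega$, and the antisymmetries of $\omega$ and of $[-,-]$, this expression collapses to $\omega(\alpha(y),[z,x])$, confirming that the Hom-pre-Malcev product delivered by Proposition \ref{pro:nsc} coincides with the one prescribed in the statement.
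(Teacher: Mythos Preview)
Your proof is correct and follows essentially the same route as the paper's: define the musical isomorphism $T\colon A^{*}\to A$ from the non-degenerate form $\omega$, check that it is an invertible Kupershmidt operator for the coadjoint representation $(A^{*},ad^{\star},(\alpha^{-1})^{*})$ using the $\alpha$-invariance and the $2$-cocycle condition in \eqref{eq:2-cocycle}, invoke Proposition~\ref{pro:nsc}, and then unwind the formula $x\cdot y=T\big(ad^{\star}(x)\,T^{-1}(y)\big)$ through $\omega$. If anything, you are more careful than the paper in tracking the $\alpha^{-2}$ hidden in $ad^{\star}$ and in actually verifying the Kupershmidt identity rather than merely asserting it.
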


 \begin{proof}
 Define the linear map $T : A^{*}\to A$ by $\langle T(x), y\rangle = w(x, y)$.    Since $(A,[-,-],\alpha )$ is a regular Hom-Malcev algebra, $(A^*,ad^\star,(\alpha^{-1})^*)$ is a representation of $A$. By the fact that $w$ is $\alpha$-symmetric and using  \eqref{eq:2-cocycle}, we obtain that $T$ is an invertible  Kupershmidt operator on the  Hom-MalcevRep pair $(A,[-,-],\alpha,ad^\star,(\alpha^{-1})^*)$. By Proposition~\ref{pro:nsc}, there exists a compatible Hom-pre-Malcev algebra
structure given by
\begin{equation*}
x\cdot y = T^{-1}(ad^{\star}(x)T(y)).
\end{equation*}
Hence,
\begin{align*}
w(x\cdot y,\alpha(z))=&\langle T(x\cdot y),\alpha(z)\rangle=\langle ad^{\star}(x)T(y),\alpha(z)\rangle=-\langle T(y),[\alpha(x), \alpha(z)]\rangle\\
&=-w(y, [\alpha(x), \alpha(z)] )=-w(y, \alpha([x,z]))=-w(\alpha(y), \alpha^{2}([x,z] ))\\
&=w(\alpha(y),[z, x]).
\qedhere
\end{align*}
 \end{proof}
Hom-pre-Malcev algebras are related to Hom-pre-alternative algebras analogously
to how Hom-pre-Lie algebras are related to Hom-dendriform algebras \cite{MakhloufHomdemdoformRotaBaxterHomalg2011}.
\begin{defn}[\cite{Q.Sun}]
$\textbf{A Hom-pre-alternative algebra}$ is a quadruple $(A,\prec,\succ,\alpha)$, where $A$ is a vector space,
$\prec,\succ: A \otimes A \longrightarrow A$
are bilinear maps and $\alpha\in gl(A)$  satisfying
for all $x, y, z \in A$ and  $x \ast y = x \prec y + x\succ y$,
\begin{align}
 &(x\succ y) \prec \alpha(z) - \alpha(x)\succ(y \prec z) + (y \prec x) \prec\alpha(z) - \alpha(y) \prec (x \ast z) = 0,\\
&(x\succ y)\prec\alpha(z) - \alpha(x)\succ(y \prec z) + (x \ast z)\succ \alpha(y) - \alpha(x)\succ(z\succ y) = 0,\\
&(x\ast y) \succ \alpha(z) - \alpha(x)\succ(y \succ z) + (y \ast x) \succ \alpha(z) - \alpha(y) \succ (x \succ z) = 0,\\
&(x\prec y)\prec\alpha(z) - \alpha(x)\prec(y \ast z) + (x \prec z)\prec \alpha(y) - \alpha(x)\prec(z\ast y) = 0.
\end{align}
\end{defn}
\begin{prop}
Let $(A,\prec,\succ, \alpha)$ be a Hom-pre-alternative algebra. Then $(A,\ast, \alpha)$ is a Hom-alternative algebra
\end{prop}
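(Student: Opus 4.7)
The plan is to verify the two defining identities of a Hom-alternative algebra, namely $as_\alpha(x,x,y)=0$ and $as_\alpha(y,x,x)=0$, for the product $\ast=\prec+\succ$, where $as_\alpha(u,v,w)=(u\ast v)\ast\alpha(w)-\alpha(u)\ast(v\ast w)$.

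First I would expand each associator by substituting $\ast=\prec+\succ$; both $as_\alpha(x,x,y)$ and $as_\alpha(y,x,x)$ then become sums of eight monomials, one for each choice of inner and outer operation from $\{\prec,\succ\}$. Denoting the four defining Hom-pre-alternative identities by $A_1,A_2,A_3,A_4$ in the order given in the definition, a direct inspection of both expansions should yield the decompositions
\begin{align*}
as_\alpha(x,x,y)&=A_1(x,x,y)+\tfrac{1}{2}\,A_3(x,x,y),\\
as_\alpha(y,x,x)&=A_2(y,x,x)+\tfrac{1}{2}\,A_4(y,x,x).
\end{align*}
The factor $\tfrac12$ reflects the intrinsic symmetries $A_3(x,y,z)=A_3(y,x,z)$ and $A_4(x,y,z)=A_4(x,z,y)$, visible directly from the defining identities, which cause $A_3(x,x,y)$ and $A_4(y,x,x)$ to each equal twice their primitive expressions. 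Since every $A_i$ vanishes by assumption and $\mathrm{char}(\mathbb{K})=0$ makes the $\tfrac12$ harmless, both Hom-alternative identities follow at once.

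The main obstacle is purely bookkeeping: each associator expands into eight monomials that must be matched term-by-term, with signs, against the contributions of the two chosen axiom instances. No conceptual difficulty arises; once the expansions are written out one observes that $A_1(x,x,y)$ captures the five monomials of $as_\alpha(x,x,y)$ that involve either $\prec$ on the outer right slot or $\alpha(x)\prec(\cdot)$ on the inner left, while $\tfrac12\,A_3(x,x,y)=(x\ast x)\succ\alpha(y)-\alpha(x)\succ(x\succ y)$ supplies the remaining three, and dually for the right-alternating identity with $A_2$ and $A_4$. This direct route is cleaner than going through the polarized identities $as_\alpha(x,y,z)+as_\alpha(y,x,z)=0$ and $as_\alpha(x,y,z)+as_\alpha(x,z,y)=0$, and it exhibits the role of each of the four Hom-pre-alternative axioms.
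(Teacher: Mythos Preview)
Your argument is correct: the decomposition
\[
as_\alpha(x,x,y)=A_1(x,x,y)+\tfrac12 A_3(x,x,y),\qquad
as_\alpha(y,x,x)=A_2(y,x,x)+\tfrac12 A_4(y,x,x)
\]
holds term by term, as one checks directly against the eight-monomial expansions. The paper states this proposition without proof, so there is no approach to compare against; your computation simply fills the gap. One small remark: your prose description of ``which five monomials $A_1(x,x,y)$ captures'' is not quite accurate (for instance $-\alpha(x)\succ(x\prec y)$ does not have outer $\prec$), but this does not affect the validity of the decomposition itself.
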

Now, we consider a Hom-alternative algebra $(A,\ast,\alpha)$, a vector space $V$ and a linear map $\beta:V\to V$. Recall that, a bimodule of $A$ with respect to $\beta$ is given by linear maps $\mathfrak{l},\mathfrak{r}:A\to gl(V)$ satisfying the following conditions:
\begin{align}
\label{rephomalt1}\mathfrak{l}(x^2)\beta &=\mathfrak{l}(\alpha(x))\mathfrak{l}(x),\\
\label{rephomalt2}\mathfrak{r}(x^2)\beta &=\mathfrak{r}(\alpha(x))\mathfrak{r}(x),\\
\label{rephomalt3}\mathfrak{r}(\alpha(y))\mathfrak{l}(x)-\mathfrak{l}(\alpha(x))\mathfrak{r}(y)&=\mathfrak{r}(x\ast y)\beta-\mathfrak{r}(\alpha(y))\mathfrak{r}(x),\\
\label{rephomalt4} \mathfrak{l}(y\ast x)\beta-\mathfrak{l}(\alpha(y))\mathfrak{l}(x)&=\mathfrak{l}(\alpha(y))\mathfrak{r}(x)-\mathfrak{r}(\alpha(x))\mathfrak{l}(y).
\end{align}
\begin{defn}
A Kupershmidt operator of Hom-alternative algebra $(A,\ast,\alpha)$ with respect to the bimodule $(V,\mathfrak{l},\mathfrak{r},\beta)$ is a linear map $T:V\to A$ such that,
for all $a, b \in V$,
\begin{equation}
\label{O-ophomalternative} \alpha\circ T =  T\circ\beta ~~\text{and}~~T (a)\ast T (b) = T \big(\mathfrak{l}(T (a))b + \mathfrak{r}(T (b))a\big).
 \end{equation}
 \end{defn}
 \begin{rem}
Rota-Baxter operator of weight $0$ on a Hom-alternative algebra $(A,\ast, \alpha)$ is
just a Kupershmidt operator associated to the bimodule $(A,L, R,\alpha)$, where $L$ and $R$ are the left and right
multiplication operators corresponding to the multiplication $\ast$.
 \end{rem}
\begin{prop}
With the above notations, the triplet $(V,\mathfrak{l}-\mathfrak{r},\beta)$ defines a representation of the Hom-Malcev admissible algebra $(A,[-,-],\alpha)$, and $T$ is a Kupershmidt operator of $(A,[-,-],\alpha)$ with respect to $(V,\mathfrak{l}-\mathfrak{r},\beta)$.
\end{prop}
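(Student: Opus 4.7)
The plan is to split the statement into its two assertions and handle them in opposite order of difficulty. Assertion~(ii) — that $T$ is a Kupershmidt operator on $(A,[-,-],\alpha,\mathfrak{l}-\mathfrak{r},\beta)$ — is an immediate algebraic consequence of the Hom-alternative Kupershmidt condition \eqref{O-ophomalternative}. First I would note that $\alpha\circ T=T\circ\beta$ is already part of the hypothesis. Then, writing \eqref{O-ophomalternative} once with arguments $(a,b)$ and once with $(b,a)$ and subtracting, one obtains
\begin{equation*}
[T(a),T(b)]=T(a)\ast T(b)-T(b)\ast T(a)=T\!\left((\mathfrak{l}-\mathfrak{r})(T(a))b-(\mathfrak{l}-\mathfrak{r})(T(b))a\right),
\end{equation*}
which is exactly the Kupershmidt identity \eqref{ophommalcev} for the pair $(A,[-,-],\alpha,\mathfrak{l}-\mathfrak{r},\beta)$. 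So (ii) reduces to (i).

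For assertion~(i), the cleanest route is to avoid the brute-force expansion of the Hom-Malcev representation identity \eqref{representation H-M} and instead go through a semi-direct product argument. The plan is: starting from the bimodule $(V,\mathfrak{l},\mathfrak{r},\beta)$ of the Hom-alternative algebra $(A,\ast,\alpha)$, form the semi-direct product
\begin{equation*}
(x+a)\ast(y+b)=x\ast y+\mathfrak{l}(x)b+\mathfrak{r}(y)a,\qquad (\alpha+\beta)(x+a)=\alpha(x)+\beta(a),
\end{equation*}
and check that it is a Hom-alternative algebra on $A\oplus V$. This verification uses precisely the bimodule conditions \eqref{rephomalt1}--\eqref{rephomalt4}: expanding the Hom-alternativity identities $as_{\alpha+\beta}(u,u,v)=as_{\alpha+\beta}(v,u,u)=0$ in components on $A\oplus V$ and polarizing yields equations on $\mathfrak{l}$ and $\mathfrak{r}$ that match \eqref{rephomalt1}--\eqref{rephomalt4} exactly.

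Once this semi-direct product Hom-alternative structure is in place, I would apply the theorem that every Hom-alternative algebra is Hom-Malcev admissible to conclude that $(A\oplus V,[-,-]_{\ast},\alpha+\beta)$ is a Hom-Malcev algebra, where $[-,-]_{\ast}$ is the commutator of $\ast$. A short direct computation gives
\begin{equation*}
[x+a,y+b]_{\ast}=[x,y]+(\mathfrak{l}-\mathfrak{r})(x)b-(\mathfrak{l}-\mathfrak{r})(y)a,
\end{equation*}
which is precisely the formula of Proposition~\ref{semidirectprduct HomMalcev} applied to the candidate representation $(V,\mathfrak{l}-\mathfrak{r},\beta)$. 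By the "if and only if" of Proposition~\ref{semidirectprduct HomMalcev}, the fact that $A\oplus V$ carries a Hom-Malcev algebra structure of this form forces $(V,\mathfrak{l}-\mathfrak{r},\beta)$ to be a representation of $(A,[-,-],\alpha)$, which is assertion~(i).

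The main obstacle is the intermediate claim that the semi-direct product $A\oplus V$ is a Hom-alternative algebra; this is what carries the real content of \eqref{rephomalt1}--\eqref{rephomalt4}. The verification is mechanical but bookkeeping-heavy, and is the analogue in the Hom setting of the classical fact that a bimodule of an alternative algebra gives rise to a representation of the associated Malcev algebra via $\mathfrak{l}-\mathfrak{r}$. If one prefers to bypass introducing the semi-direct product, an alternative is to verify \eqref{rephommalcev}--\eqref{representation H-M} directly: the first is immediate from the (implicit) intertwining of $\mathfrak{l},\mathfrak{r}$ with $\alpha,\beta$, while the second becomes, after substituting $\varrho=\mathfrak{l}-\mathfrak{r}$ and writing $[x,y]=x\ast y-y\ast x$, an identity that collapses term by term using \eqref{rephomalt1}--\eqref{rephomalt4}. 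Either path is valid, but the semi-direct product route is conceptually cleaner and avoids the combinatorial explosion of terms.
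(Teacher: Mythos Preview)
Your proposal is correct and follows essentially the same route as the paper: for assertion~(i) you pass through the semi-direct product Hom-alternative algebra on $A\oplus V$, take its commutator to get a Hom-Malcev structure of the form in Proposition~\ref{semidirectprduct HomMalcev}, and invoke the ``if and only if'' there to conclude that $(V,\mathfrak{l}-\mathfrak{r},\beta)$ is a representation; for assertion~(ii) you subtract the two instances of \eqref{O-ophomalternative} exactly as the paper does. The only cosmetic difference is that the paper states the semi-direct product Hom-alternative structure as a known fact rather than outlining its verification.
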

\begin{proof}
First, note that  $(V,\mathfrak{l}, \mathfrak{r},\beta)$ is a bimodule of a Hom-alternative algebra $A$ if and only if the direct sum $(A \oplus V,\star,\alpha+\beta)$ of vector spaces is a Hom-alternative algebra (the semi-direct product) by defining multiplication in $A\oplus V$ by
$$(x+a)\star (y+b)= x\ast y+\mathfrak{l}(x)b+ \mathfrak{r}(y)a,\ \ \forall \  x,y\in A, a,b \in V.$$
Next, for its associated Hom-Malcev admissible algebra $(A \oplus V, \overbrace{[-, -]}, \alpha+\beta)$,
\begin{align*}
 \overbrace{[x+a, y+b]}=&(x+a)\star (y+b) -(y+b)\star (x+a)\\
=& x\ast y+ \mathfrak{l}(x)b+ \mathfrak{r}(y)a - y\ast x -\mathfrak{l}(y)a - \mathfrak{r}(x)b \\
=&[x, y] + (\mathfrak{l}-\mathfrak{r})(x)b - (\mathfrak{l} -\mathfrak{r})(y)a.
\end{align*}
According to Proposition \ref{semidirectprduct HomMalcev}, $(V,\mathfrak{l}-\mathfrak{r},\beta)$ is a representation of $(A,[-, -],\alpha)$.
Moreover, $T$ is a Kupershmidt operator of $(A,[-,-],\alpha)$ with respect to $(V,\mathfrak{l}-\mathfrak{r},\beta)$  since
\begin{align*}
[T(a),T(b)]=&T(a)\ast T(b)-T(b)\ast T(a)\\
=&T \big(\mathfrak{l}(T (a))b + \mathfrak{r}(T (b))a\big)-T \big(\mathfrak{l}(T (b))a + \mathfrak{r}(T (a))b\big)\\
=&T \big((\mathfrak{l}-\mathfrak{r})(T (a))b - (\mathfrak{l}-\mathfrak{r})(T (b))a\big).
\qedhere \end{align*}
\end{proof}
\begin{thm}\label{Hom-pre-altToHom-Pre-Malcev}
    Let $T:V\to A$ be a Kupershmidt operator of Hom-alternative algebra $(A,\ast,\alpha)$ with respect to the bimodule $(V,\mathfrak{l},\mathfrak{r},\beta)$. Then $(V,\prec,\succ, \beta)$ be a Hom-pre-alternative algebra, where for all $a,b\in V$,
\begin{equation}
  \label{homalt==>prehomalt} a\succ b= \mathfrak{l}(T (a))b \ \ \text{and}\ \ a\prec b= \mathfrak{r}(T (b))a.
\end{equation}
 Moreover, if $(V,\cdot,\beta)$ is the   multiplicative Hom-pre-Malcev  algebra associated to the  Hom-Malcev admissible algebra $(A,[-,-],\alpha)$ on the representation $(V,\mathfrak{l}-\mathfrak{r},\beta)$, then $a\cdot b=a\succ b-b\prec a$.

\end{thm}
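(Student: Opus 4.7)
The plan is to verify the two assertions in sequence. First, I would check that $(V,\prec,\succ,\beta)$ satisfies the four defining axioms of a Hom-pre-alternative algebra. Second, I would verify the compatibility identity $a\cdot b = a\succ b - b\prec a$ by a short direct computation.

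For the first part, the strategy is to translate each Hom-pre-alternative axiom for $(V,\prec,\succ,\beta)$ into the corresponding bimodule identity \eqref{rephomalt1}--\eqref{rephomalt4} applied to the elements $T(a),T(b),T(c)\in A$. The translation proceeds uniformly via two observations: (i) the Kupershmidt condition $\alpha\circ T = T\circ\beta$ lets one rewrite $T(\beta(c))=\alpha(T(c))$ in every computation; and (ii) the defining identity $T(a)\ast T(b) = T(\mathfrak{l}(T(a))b + \mathfrak{r}(T(b))a)$ converts every occurrence of $T(a)\ast T(b)$ inside $\mathfrak{l}$ or $\mathfrak{r}$ into $\mathfrak{l}(T(a\succ b + a\prec b))=\mathfrak{l}(T(a\ast b))$ and similarly for $\mathfrak{r}$. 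For example, to verify the first axiom one expands
\begin{align*}
(a\succ b)\prec\beta(c) &= \mathfrak{r}(\alpha(T(c)))\,\mathfrak{l}(T(a))\,b,\\
\beta(a)\succ(b\prec c) &= \mathfrak{l}(\alpha(T(a)))\,\mathfrak{r}(T(c))\,b,\\
(b\prec a)\prec\beta(c) &= \mathfrak{r}(\alpha(T(c)))\,\mathfrak{r}(T(a))\,b,\\
\beta(b)\prec(a\ast c) &= \mathfrak{r}(T(a\ast c))\,\beta(b) = \mathfrak{r}(T(a)\ast T(c))\,\beta(b),
\end{align*}
and the alternating sum vanishes precisely by \eqref{rephomalt3} applied to $x=T(a),y=T(c)$. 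The remaining three axioms follow in the same way, each matching exactly one of \eqref{rephomalt1}--\eqref{rephomalt4}.

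For the second assertion, I would simply compute, using $\varrho=\mathfrak{l}-\mathfrak{r}$ and the recipe \eqref{eq:hommalcTohompremalc} for the induced Hom-pre-Malcev product,
\[
a\cdot b = \varrho(T(a))\,b = \mathfrak{l}(T(a))\,b - \mathfrak{r}(T(a))\,b = a\succ b - b\prec a,
\]
which is the desired identity. The multiplicativity of $\beta$ on $\cdot$ is inherited from the Hom-pre-Malcev structure already provided by Proposition~\ref{hommalcev==>hompremalcev}.

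The main obstacle is purely bookkeeping: the translation of each Hom-pre-alternative axiom involves several applications of both $\alpha\circ T=T\circ\beta$ and the Kupershmidt identity, and one must be careful to keep track of which argument of $\mathfrak{l}$ or $\mathfrak{r}$ gets twisted by $\alpha$. Since the four bimodule identities \eqref{rephomalt1}--\eqref{rephomalt4} are already in a form that mirrors the four Hom-pre-alternative axioms, there is no conceptual difficulty; the calculation is routine once the correspondence is set up, and no auxiliary identity beyond \eqref{rephomalt1}--\eqref{rephomalt4} and \eqref{O-ophomalternative} is needed.
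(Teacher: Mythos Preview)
Your proposal is correct and follows essentially the same approach as the paper: you verify one Hom-pre-alternative axiom explicitly by expanding it via the definitions of $\prec,\succ$, using $\alpha\circ T=T\circ\beta$ and the Kupershmidt identity to rewrite the terms, and then invoking \eqref{rephomalt3}; the paper does exactly this same computation and likewise says the remaining axioms are verified similarly. Your verification of $a\cdot b=a\succ b-b\prec a$ is also identical to the paper's.
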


\begin{proof} For any $a,b,c\in V$, using \eqref{rephomalt3} and \eqref{O-ophomalternative} yields
\begin{align*}
&(a\succ b) \prec \beta(c) -\beta(a)\succ(b \prec c) + (b \prec a) \prec\beta(c)-\beta(b) \prec (a \ast c)\\
&\quad =\mathfrak{r}(T (\beta(c)))\mathfrak{l}(T (a))b-\mathfrak{l}(T(\beta(a)))\mathfrak{r}(T(c))b \\
&\quad\quad  +\mathfrak{r}(T(\beta(c)))\mathfrak{r}(T (a))b-\mathfrak{r}(T(a \ast c))\beta(b)\\
&\quad = \mathfrak{r}(\alpha(T(c)))\mathfrak{l}(T (a))b-\mathfrak{l}(\alpha(T(a)))\mathfrak{r}(T(c))b \\
&\quad \quad +\mathfrak{r}(\alpha(T(c)))\mathfrak{r}(T(a))b-\mathfrak{r}(T(a \ast c))\beta(b)=0.
\end{align*}
The other identities for $(V,\prec,\succ,\beta)$ being a Hom-pre-alternative algebras can be verified
similarly.
Moreover, using   \eqref{hommalcev==>hompremalcev} and  \eqref{homalt==>prehomalt},
\begin{equation*}
a\cdot b=(\mathfrak{l}-\mathfrak{r})(T(a))b=\mathfrak{l}(T(a))b-\mathfrak{r}(T(a))b=a\succ b-b\prec a.
\qedhere \end{equation*}
 \end{proof}
\begin{cor}\label{homalt==>homprealt}
Let $(A, \ast , \alpha)$ be a Hom-alternative algebra and $\mathcal{R}:A\rightarrow A$ be a Rota-Baxter operator
of weight $0$ such that $ \mathcal{R}\alpha =\alpha  \mathcal{R}$. If multiplications
$\prec $ and $\succ $ on $A$ are defined for all $x, y\in A$ by
$x\prec y = x\ast \mathcal{R}(y)$ and $x\succ y = \mathcal{R}(x)\ast y$,
then $(A, \prec , \succ , \alpha)$ is a Hom-pre-alternative algebra.

Moreover, if $(A,\cdot,\alpha)$ be the  multiplicative Hom-pre-Malcev  algebra associated to the  Hom-Malcev admissible algebra $(A,[-,-],\alpha)$, then $x\cdot y=x\succ y-y\prec x$.
\end{cor}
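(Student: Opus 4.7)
The plan is to recognize this corollary as the specialization of Theorem \ref{Hom-pre-altToHom-Pre-Malcev} to the regular (adjoint) bimodule, so that almost no fresh computation is needed. First I would verify that $(A,L,R,\alpha)$ is a bimodule of the Hom-alternative algebra $(A,\ast,\alpha)$, where $L(x)y = x\ast y$ and $R(x)y = y\ast x$. This is essentially a reformulation of the Hom-alternative identities \eqref{homalt}: the conditions \eqref{rephomalt1}--\eqref{rephomalt4} amount to saying $\mathrm{as}_\alpha(x,x,y) = \mathrm{as}_\alpha(y,x,x) = 0$ together with their linearisations, and these hold by definition.

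Next, I would observe that the Rota-Baxter identity for weight $0$, combined with $\mathcal{R}\alpha = \alpha\mathcal{R}$, is exactly condition \eqref{O-ophomalternative} applied to the operator $T := \mathcal{R}$ and the bimodule $(A,L,R,\alpha)$. Indeed,
\begin{align*}
T(a)\ast T(b) &= \mathcal{R}(a)\ast\mathcal{R}(b) = \mathcal{R}\bigl(\mathcal{R}(a)\ast b + a\ast\mathcal{R}(b)\bigr) \\
&= T\bigl(L(T(a))b + R(T(b))a\bigr),
\end{align*}
so $\mathcal{R}$ is a Kupershmidt operator of $(A,\ast,\alpha)$ with respect to $(A,L,R,\alpha)$.

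At this point I would invoke Theorem \ref{Hom-pre-altToHom-Pre-Malcev}: the operations
\[
a\succ b = L(T(a))b = \mathcal{R}(a)\ast b, \qquad a\prec b = R(T(b))a = a\ast \mathcal{R}(b)
\]
endow $A$ with the structure of a Hom-pre-alternative algebra, which is exactly the first assertion. For the ``moreover'' part, the second statement of Theorem \ref{Hom-pre-altToHom-Pre-Malcev} gives that the induced Hom-pre-Malcev product coincides with $a\cdot b = a\succ b - b\prec a$, namely
\[
x\cdot y = \mathcal{R}(x)\ast y - y\ast\mathcal{R}(x) = [\mathcal{R}(x), y],
\]
which agrees with the Hom-pre-Malcev structure obtained in Proposition \ref{hommalcev==>hompremalcev} via the Kupershmidt operator $\mathcal{R}$ on the Hom-Malcev admissible algebra $(A,[-,-],\alpha)$ with respect to the adjoint representation.

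The only potential obstacle is checking that the regular bimodule $(A,L,R,\alpha)$ really satisfies \eqref{rephomalt1}--\eqref{rephomalt4}; this is standard for Hom-alternative algebras (it is the Hom-analogue of the fact that an alternative algebra is a bimodule over itself) and follows directly by polarising the Hom-alternative identities. Once this is in place, the corollary is an immediate application of the theorem, and no further direct verification of the four Hom-pre-alternative axioms is required.
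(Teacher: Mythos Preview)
Your proposal is correct and matches the paper's intended approach exactly: the corollary is stated without proof in the paper, being an immediate specialization of Theorem~\ref{Hom-pre-altToHom-Pre-Malcev} to the regular bimodule $(A,L,R,\alpha)$, the fact that a Rota-Baxter operator of weight~$0$ is a Kupershmidt operator for this bimodule having already been recorded in the Remark preceding the theorem. Your write-up supplies precisely the details that the paper leaves implicit.
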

Moreover, Hom-Malcev
algebras, Hom-alternative algebras, Hom-pre-Malcev algebras and
Hom-pre-alternative algebras are closely related as follows (in the sense of commutative diagram of categories):
\begin{equation}\label{Diagramme1}
\begin{split}
\resizebox{13cm}{!}{\xymatrix{
\ar[rr] \mbox{\bf Hom-pre-alt alg $(A,\prec,\succ,\alpha)$ }\ar[d]_{\mbox{ $\ast=\prec+\succ$}}\ar[rr]^{\mbox{\quad\quad $\cdot=\prec-\succ$\quad\quad }}
                && \mbox{\bf Hom-pre-Malcev alg $(A,\cdot ,\alpha)$ }\ar[d]_{\mbox{ Commutator}}\\
\ar[rr] \mbox{\bf Hom-alt alg $(A,\ast,\alpha)$}\ar@<-1ex>[u]_{\mbox{ R-B }}\ar[rr]^{\mbox{ Commutator\quad\quad}}
                && \mbox{\bf Hom-Malcev alg  $(A,[-,-],\alpha)$}\ar@<-1ex>[u]_{\mbox{ R-B}}}
}\end{split}
\end{equation}


\subsection{Bimodules and Kupershmidt operators of Hom-pre-Malcev algebras} \label{subsec:bimodules}


In this subsection, we introduce and study bimodules of Hom-pre-Malcev algebras and give some constructions.
\begin{defn}\label{representation HPM}
 Let $(A, \cdot, \alpha)$ be a Hom-pre-Malcev algebra and $V$ be a vector space. Let
  $\ell, r: A \longrightarrow  gl(V)$ be two linear maps and $\beta \in gl(V)$. Then $(V, \ell,  r, \beta)$ is called a bimodule of $A$  if the
following conditions hold for any $x,y,z \in A$:
\begin{align}
 &\beta \ell(x)=\ell(\alpha(x)) \beta,\quad \beta r(x)=r(\alpha(x)) \beta,\label{rep1}\\
\begin{split}
& r(\alpha^{2}(x))\varrho(\alpha (y))\varrho(z)- r(\alpha (z)\cdot(y\cdot x))\beta^{2} + \ell(\alpha^{2}(y))r(z\cdot x)\beta \\
&\quad \quad+ \ell(\alpha ([y,z]))r(\alpha (x))\beta - \ell(\alpha^{2}(z))r(\alpha (x))\varrho(y) = 0,
\end{split}
\label{rep2}\\
\begin{split}
 &\ell(\alpha^{2} (y))\ell(\alpha (z))r(x)-r(\alpha^{2}(x))\varrho(\alpha (y))\varrho(z) - \ell(\alpha^{2}(z))r(y\cdot x)\beta \\
 &\quad \quad- r(\alpha (z\cdot x))\varrho(\alpha (y))\beta+ r([z,y]\cdot\alpha (x))\beta^{2} = 0,
 \end{split}
 \label{rep3}\\
 \begin{split}
   &r(\alpha (y)\cdot(z\cdot x))\beta^{2}+r(\alpha^{2} (x))\varrho([y,z])\beta-\ell(\alpha^{2} (y))\ell(\alpha (z)) r(x) \\
    &\quad \quad+r(\alpha (y\cdot x))\varrho(\alpha (z))\beta +\ell(\alpha^{2}(z))r(\alpha (x))\varrho(y)= 0,
    \end{split}
    \label{rep4}\\
     \begin{split}
     &\ell([[x,y],\alpha (z)])\beta^{2}- \ell(\alpha^{2} (x))\ell(\alpha (y))\ell(z) + \ell(\alpha^{2} (z))\ell(\alpha (x))\ell(y) \\
   &\quad \quad+\ell(\alpha ([y,z])\ell(\alpha (x))\beta+ \ell(\alpha^{2}(y))\ell([x,z])\beta = 0,
     \end{split}
   \label{rep5}
 \end{align}
 where $\varrho(x)=\ell(x)-r(x)$ and $[x,y]= x\cdot y - y\cdot x$.
\end{defn}

Now, define a linear operation $\cdot_{\ltimes} : \otimes^{2}(A \oplus V ) \longrightarrow (A \oplus V )$ by
$$(x + a) \cdot_{\ltimes} (y + b) = x \cdot y + \ell(x)(b) + r(y)(a), ~~\forall \  x, y \in A, a, b \in V,$$
and a linear map $\alpha + \beta : A \oplus V \longrightarrow A \oplus V$ by
$$(\alpha + \beta)(x + a) = \alpha(x) + \beta(a), ~~\forall \  x \in A, a \in V.$$

\begin{prop}\label{semidirectproduct hompreMalcev}
With the above notations, $(A\oplus V, \cdot_{\ltimes}, \alpha+ \beta)$ is a  Hom-pre-Malcev algebra, which is
denoted by $A \ltimes_{(\ell,~r)}^{\alpha, \beta}V$ or simply $A \ltimes V$ and called the semi-direct product of the  Hom-pre-Malcev algebra $(A, \cdot, \alpha)$ and
the bimodule $(V, \ell, r, \beta)$.
\end{prop}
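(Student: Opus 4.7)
The only thing to check is that $\cdot_\ltimes$ satisfies the Hom-pre-Malcev identity \eqref{HPM} on $A\oplus V$. Since \eqref{HPM} is $4$-linear in its entries, expanding each argument as $x+a$, $y+b$, $z+c$, $t+d$ and invoking bilinearity of $\cdot_\ltimes$ reduces the verification to the sixteen ``homogeneous'' configurations in which each of the four arguments is taken entirely in $A$ or entirely in $V$. The plan is to handle these sixteen cases in three regimes.

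The all-in-$A$ configuration is precisely \eqref{HPM} for $(A,\cdot,\alpha)$, which holds by hypothesis. At the opposite extreme, any configuration containing at least two $V$-arguments is trivially satisfied: each summand of the expanded form of \eqref{HPM} is the value of a binary tree with four leaves and three $\cdot_\ltimes$-nodes, and a short induction on subtree depth --- using only $V\cdot_\ltimes V=0$ together with the fact that $\ell(x)$ and $r(x)$ preserve $V$ --- shows that any subtree containing two or more $V$-leaves evaluates to $0$ (indeed the least common ancestor of two $V$-leaves has two children each living entirely in $V$, so that node's output is $0$, and this propagates to the root). Hence every summand of \eqref{HPM} vanishes individually in each of these twelve configurations.

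The heart of the argument is therefore the four configurations containing exactly one $V$-argument. Projecting \eqref{HPM} onto the $V$-summand and using \eqref{rep1} repeatedly to push $\beta$ past $\ell$ and $r$, the single-$V$ case with the $V$-entry in slot $x$ (respectively $y$, $z$, $t$) reduces term-by-term to the bimodule axiom \eqref{rep2} (respectively \eqref{rep3}, \eqref{rep4}, \eqref{rep5}); the $A$-component in each such case is simply \eqref{HPM} for $(A,\cdot,\alpha)$ with one slot zeroed, and hence vanishes. The main technical obstacle is the bookkeeping: each of the five summands on the left of the relevant bimodule axiom must be matched to precisely one of the five terms of \eqref{HPM}, and one must correctly handle the appearances of $\ell$ versus $r$ according to whether the $V$-argument sits on the left or right of each $\cdot_\ltimes$-node. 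Once these four matchings are carried out, combining the three regimes establishes \eqref{HPM} on all of $A\oplus V$, so $(A\oplus V,\cdot_\ltimes,\alpha+\beta)$ is a Hom-pre-Malcev algebra, as claimed.
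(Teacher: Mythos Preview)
Your approach is correct and is essentially the same as the paper's, only organized more conceptually. The paper expands each of the five terms of \eqref{HPM} at once on full general elements $x+a,\,y+b,\,z+c,\,t+d$; what comes out is an $A$-part (the Hom-pre-Malcev identity for $A$) plus four $V$-parts linear in $a$, $b$, $c$, $d$ respectively, which --- after the same uses of \eqref{rep1} you invoke --- are precisely \eqref{rep2}--\eqref{rep5}. Your explicit explanation of why the $\ge 2$-in-$V$ configurations vanish is a nice addition (the paper silently drops these terms since $V\cdot_\ltimes V=0$), but otherwise the two arguments coincide term by term.
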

\begin{proof}
For any $x, y, z, t \in A$ and  $a, b, c, d \in V$,
\begin{align*}
&\big((\alpha + \beta)[y+b,z+c]_{\varrho}\big)\cdot_{\ltimes} (\alpha + \beta) \big((x+a)\cdot_{\ltimes}(t + d)\big) \\
&=\alpha([y,z])\cdot\alpha (x\cdot t) + \ell(\alpha ([y,z]))\big(\ell(\alpha(x))\beta(d) + r(\alpha(t))\beta(a)\big)\\
&\quad\quad + r(\alpha(x\cdot t))\big(\varrho(\alpha(y))\beta(c) -\varrho(\alpha(z))\beta(b)\big),
\\
&[[x + a,y + b]_{\varrho},(\alpha+\beta)(z+c)]_{\varrho}\cdot_{\ltimes}(\alpha^{2}+\beta^{2})(t+ d)\\
&=[[x,y],\alpha(z)]\cdot\alpha^{2}(t) + \ell([[x,y],\alpha(z)])\beta^{2}(d)\\
& \quad\quad+ r(\alpha^{2}(t))\big(\varrho([x,y])\beta(c)-\varrho(\alpha(z))(\varrho(x)b -\varrho(y)a)\big),
\\
&(\alpha^{2}+\beta^{2})(y+b)\cdot_{\ltimes}\big([x +a,z+c]_{\varrho}\cdot_{\ltimes}(\alpha+\beta)(t+ d)\big)\\
&=\alpha^{2}(y)\cdot ([x,z]\cdot\alpha(t)) + \ell(\alpha^{2}(y))\big(\ell([x,z])\beta(d) + r(\alpha(t))(\varrho(x)c-\varrho(z)a)\big)\\
 &\quad\quad+ r\big([x,z]\cdot\alpha(t)\big)\beta^{2}(b),
\\
&(\alpha^{2}+\beta^{2})(x+a)\cdot_{\ltimes}\big((\alpha+\beta)(y+b)\cdot_{\ltimes}((z +c) \cdot_{\ltimes}(t+d))\big)\\
&=\alpha^{2}(x)\cdot (\alpha(y)\cdot(z\cdot t)) + \ell(\alpha^{2}(x))\big(\ell(\alpha(y))(\ell(z)d +r(t)c) +r(z\cdot t)\beta(b)\big)\\
&\quad\quad+ r\big(\alpha(y)\cdot(z\cdot t)\big)\beta^{2}(a),
\\
&(\alpha^{2}+\beta^{2})(z+c)\cdot_{\ltimes}\big((\alpha+\beta)(x+a)\cdot_{\ltimes}((y +b) \cdot_{\ltimes}(t+d))\big)\\
&=\alpha^{2}(z)\cdot (\alpha(x)\cdot(y\cdot t)) + \ell(\alpha^{2}(z))\big(\ell(\alpha(x))(\ell(y)d +r(t)b) +r(y\cdot t)\beta(a)\big)\\
& \quad\quad+ r\big(\alpha(x)\cdot(y\cdot t)\big)\beta^{2}(c).
\end{align*}
Hence $(A \oplus V, \cdot _{\ltimes}, \alpha + \beta)$ is a Hom-pre-Malcev algebra if and only if
$(V, \ell, r, \beta)$ is a bimodule of $(A,\cdot, \alpha)$.
\end{proof}
\begin{prop}\label{rephompremalcev==rephommalcev}
  Let $(V,\ell,r,\beta)$ be a   bimodule of a Hom-pre-Malcev algebra $(A,\cdot,\alpha)$  and $(A,[-, -],\alpha)$
  be its associated Hom-Malcev algebra. Then,
  \begin{enumerate}
    \item \label{item1:prop:rephompremalcev==rephommalcev}
    $(V,\ell,\beta)$ is a representation of $(A, [-, -],\alpha)$,
    \item \label{item2:prop:rephompremalcev==rephommalcev}
    $(V,\ell - r,\beta)$ is a representation of $(A, [-, -],\alpha)$.
  \end{enumerate}
\end{prop}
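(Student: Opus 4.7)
The plan is to prove both parts structurally, relying heavily on results already established in the excerpt rather than carrying out long direct verifications.

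For part \ref{item1:prop:rephompremalcev==rephommalcev}, I would first note that the compatibility $\ell(\alpha(x))\beta = \beta\ell(x)$ is one half of the bimodule axiom \eqref{rep1}, which is exactly the first axiom \eqref{rephommalcev} required of a representation of a Hom-Malcev algebra. It then remains to verify \eqref{representation H-M} for $\ell$, and this is precisely identity \eqref{rep5} of Definition \ref{representation HPM} after a trivial rearrangement: moving the first term to the right and using $\varrho([z,x]) = -\varrho([x,z])$ (equivalently, replacing $[x,z]$ by $-[z,x]$ in the bimodule axiom). Thus part \ref{item1:prop:rephompremalcev==rephommalcev} is essentially a direct reading of axiom \eqref{rep5}.

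For part \ref{item2:prop:rephompremalcev==rephommalcev}, rather than combining the five bimodule axioms \eqref{rep1}--\eqref{rep5} by brute force, I would exploit the semi-direct product machinery. By Proposition \ref{semidirectproduct hompreMalcev}, the data $(V,\ell,r,\beta)$ produces a Hom-pre-Malcev algebra $A\ltimes_{(\ell,r)}^{\alpha,\beta} V$. By Proposition \ref{prop:HompreMalcevHomMalcevadmis}, its commutator endows $A\oplus V$ with a Hom-Malcev algebra structure. A direct computation of this commutator gives
\begin{equation*}
[x+a,\,y+b]_{\ltimes} = (x+a)\cdot_{\ltimes}(y+b) - (y+b)\cdot_{\ltimes}(x+a) = [x,y] + (\ell-r)(x)b - (\ell-r)(y)a,
\end{equation*}
which is exactly the bracket of the semi-direct product Hom-Malcev algebra $A\ltimes_{\ell-r}^{\alpha,\beta} V$ described in Proposition \ref{semidirectprduct HomMalcev}. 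Moreover, $\alpha+\beta$ plays the twisting role in both constructions. Applying the converse direction of Proposition \ref{semidirectprduct HomMalcev} then yields that $(V,\ell-r,\beta)$ is a representation of the associated Hom-Malcev algebra $(A,[-,-],\alpha)$.

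There is no real obstacle here: the only nontrivial input is the identification in part \ref{item1:prop:rephompremalcev==rephommalcev} that axiom \eqref{rep5} is literally the representation axiom for $\ell$, and in part \ref{item2:prop:rephompremalcev==rephommalcev} the observation that taking commutators of $\cdot_{\ltimes}$ commutes with passing to the associated Hom-Malcev structure. The main bookkeeping step worth spelling out is the commutator computation in $A\ltimes_{(\ell,r)}^{\alpha,\beta}V$ above, so that one may invoke Proposition \ref{semidirectprduct HomMalcev} without ambiguity.
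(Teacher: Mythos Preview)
Your proposal is correct and follows essentially the same approach as the paper: part \ref{item1:prop:rephompremalcev==rephommalcev} is deduced directly from axiom \eqref{rep5}, and part \ref{item2:prop:rephompremalcev==rephommalcev} is obtained by forming the semi-direct product Hom-pre-Malcev algebra via Proposition \ref{semidirectproduct hompreMalcev}, computing its commutator bracket, and then invoking the converse direction of Proposition \ref{semidirectprduct HomMalcev}. The only difference is that you spell out the bookkeeping (the role of \eqref{rep1} and the sign rearrangement in \eqref{rep5}) slightly more explicitly than the paper does.
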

\begin{proof}
\ref{item1:prop:rephompremalcev==rephommalcev}. The statement \ref{item1:prop:rephompremalcev==rephommalcev} follows immediately from \eqref{rep5}.\\
\ref{item2:prop:rephompremalcev==rephommalcev}.  By Proposition \ref{semidirectproduct hompreMalcev},
$A\ltimes_{\ell,r}^{\alpha, \beta} V$ is a Hom-pre-Malcev algebra. For its associated Hom-Malcev
 algebra $(A \oplus V, \overbrace{[-, -]}, \alpha+\beta)$,
\begin{align*}
 \overbrace{[x+a, y+b]}& =(x+a)\cdot_{\ltimes} (y+b) - (y+b)\cdot_{\ltimes} (x+a) \\
&= x \cdot y+ \ell(x)b+ r(y)a - y \cdot x - \ell(y)a - r(x)b \\
&= [x, y] + (\ell-r)(x)b - (\ell -r)(y)a.
\end{align*}
By Proposition \ref{semidirectprduct HomMalcev}, $(V,\ell-r,\beta)$ is a representation of $(A,[-, -],\alpha)$.
\end{proof}
\begin{cor}
  Let $(V,\ell,r,\beta)$ be a   bimodule of a Hom-pre-Malcev algebra $(A,\cdot,\alpha)$,  and let $(A,[-, -],\alpha)$
  be its associated Hom-Malcev algebra. Let $V^{*}$ be the dual of vector space $V$, where $\beta$ is inversible, then $(V^{*}, \ell^{\star}-r^{\star},(\beta^{-1})^{*})$ is a representation of the associated  Hom-Malcev
algebra $(A,[-, -], \alpha)$.
\end{cor}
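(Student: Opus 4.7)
The plan is to obtain the statement as a direct consequence of two already-established facts in the paper, rather than by redoing the long computation with \eqref{rep1}–\eqref{rep5}.

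First, I would invoke Proposition~\ref{rephompremalcev==rephommalcev}\,\ref{item2:prop:rephompremalcev==rephommalcev}, which says that if $(V,\ell,r,\beta)$ is a bimodule of the Hom-pre-Malcev algebra $(A,\cdot,\alpha)$, then $(V,\ell-r,\beta)$ is a representation of the associated Hom-Malcev algebra $(A,[-,-],\alpha)$. This immediately reduces the problem to producing the dual of a given Hom-Malcev representation.

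Next, since $\beta$ is invertible, Lemma~\ref{lem:dualrep} applies to the representation $(V,\ell-r,\beta)$ and yields that $(V^{*},(\ell-r)^{\star},(\beta^{-1})^{*})$ is again a representation of $(A,[-,-],\alpha)$, where the star is defined by \eqref{eq:new1}. The only remaining point is the bookkeeping identification $(\ell-r)^{\star}=\ell^{\star}-r^{\star}$. This is routine: for any $x\in A$ and $\xi\in V^{*}$,
\begin{align*}
(\ell-r)^{\star}(x)(\xi)
&=(\ell-r)^{*}(\alpha(x))\bigl((\beta^{-2})^{*}(\xi)\bigr)\\
&=\ell^{*}(\alpha(x))\bigl((\beta^{-2})^{*}(\xi)\bigr)-r^{*}(\alpha(x))\bigl((\beta^{-2})^{*}(\xi)\bigr)\\
&=\ell^{\star}(x)(\xi)-r^{\star}(x)(\xi),
\end{align*}
using the linearity of the pointwise dual. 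Substituting this identity into the conclusion of Lemma~\ref{lem:dualrep} gives exactly the claim.

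There is essentially no obstacle: the heavy lifting (the fact that taking the appropriately twisted dual preserves a Hom-Malcev representation, which is where the twist $\alpha$ and $\beta^{-2}$ enter non-trivially) has already been done in Lemma~\ref{lem:dualrep}. The only care needed is to make the linearity identification $(\ell-r)^{\star}=\ell^{\star}-r^{\star}$ explicit, so that the result is stated in the form $\ell^{\star}-r^{\star}$ rather than as the dual of the single linear map $\ell-r$.
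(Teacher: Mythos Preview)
Your proof is correct and follows exactly the same approach as the paper, which simply states that the corollary follows from Proposition~\ref{rephompremalcev==rephommalcev} and Lemma~\ref{lem:dualrep}. The explicit verification that $(\ell-r)^{\star}=\ell^{\star}-r^{\star}$ is a welcome addition that the paper leaves implicit.
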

\begin{proof}
   It follows from Proposition \ref{rephompremalcev==rephommalcev}  and Lemma \ref{lem:dualrep}.
\end{proof}

If $(A, \cdot, \alpha)$ is a Hom-pre-Malcev algebra and $(A, [-, -], \alpha)$ is the associated  Hom-Malcev algebra, then $(A, L_\cdot, \alpha)$ is a representation of $(A, [-, -], \alpha)$ , where $L_\cdot$ is the left operation of $(A, \cdot, \alpha)$ given by $L(x)(y)=x\cdot y$.
  \begin{prop}
  Let $(A,\cdot,\alpha)$ be a Hom-algebra.
  Then $(A,\cdot,\alpha)$ is a Hom-pre-Malcev algebra if and only if $(A, [-, -],\alpha)$ defined by  \eqref{commutator}
is a Hom-Malcev algebra and $(A,L_{\cdot},\alpha)$ is a representation of $(A, \cdot,\alpha)$.
\end{prop}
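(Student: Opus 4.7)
The plan is to observe that, under multiplicativity of $\alpha$, the Hom-pre-Malcev identity \eqref{HPM} is literally a rewriting of the representation axiom \eqref{representation H-M} for $\varrho = L_{\cdot}$ and $\beta = \alpha$, after invoking the anti-symmetry $[x,z] = -[z,x]$ together with the identifications $\alpha([y,z]) = [\alpha(y),\alpha(z)]$ and $\alpha(x)\cdot\alpha(t) = \alpha(x\cdot t)$. Consequently, each direction of the equivalence reduces to this translation.

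For the forward implication, I would first invoke Proposition \ref{prop:HompreMalcevHomMalcevadmis} to conclude that the commutator $[x,y] = x\cdot y - y\cdot x$ equips $A$ with the structure of a Hom-Malcev algebra. It then remains to verify that $L_{\cdot}$ is a representation of $(A,[-,-],\alpha)$ on $A$ with respect to $\alpha$. Equation \eqref{rephommalcev} unpacks to the multiplicativity identity $\alpha(x\cdot y) = \alpha(x)\cdot\alpha(y)$, which is built into the ambient Hom-algebra assumption. For the main equation \eqref{representation H-M}, I apply both sides to an arbitrary $t \in A$, rewrite $L_{\cdot}(u)v = u\cdot v$, and then the rearrangement described above turns the result into exactly \eqref{HPM}.

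The reverse implication runs the same computation backwards: given that $(A,[-,-],\alpha)$ is Hom-Malcev and that $L_{\cdot}$ is a representation, evaluating \eqref{representation H-M} on $t$ and substituting through the same identifications reproduces \eqref{HPM}, whence $(A,\cdot,\alpha)$ is a Hom-pre-Malcev algebra.

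The only real hurdle is bookkeeping: keeping straight the alternating signs and the placement of $\alpha$ in each composed term of \eqref{representation H-M} when matched against the five terms of \eqref{HPM}. It is also worth noting that multiplicativity of $\alpha$ need not be imposed as a separate hypothesis, since it is automatically enforced by the first representation axiom $\varrho(\alpha(x))\beta = \beta\varrho(x)$ evaluated at $\varrho = L_{\cdot}$ and $\beta = \alpha$, making the equivalence internally consistent.
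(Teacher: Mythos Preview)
Your approach matches the paper's: both simply observe that the Hom-pre-Malcev identity \eqref{HPM}, evaluated on a test element $t$, is a literal rewriting of the representation condition \eqref{representation H-M} for $\varrho = L_\cdot$ and $\beta = \alpha$, with Proposition~\ref{prop:HompreMalcevHomMalcevadmis} supplying the Hom-Malcev part of the forward direction. One wrinkle worth cleaning up: your claim that multiplicativity is ``built into the ambient Hom-algebra assumption'' is not true under this paper's conventions (a Hom-algebra is not assumed multiplicative), and your later remark that multiplicativity is enforced by \eqref{rephommalcev} only helps in the reverse implication --- in the forward direction \eqref{rephommalcev} is precisely what you are trying to establish, so multiplicativity really has to be taken as a standing hypothesis. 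The paper's own proof silently glosses over this same point, so the issue lies in the statement rather than in your strategy.
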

    \begin{proof}
       It follows from the definition of Hom-Malcev algebra and representation of  Hom-Malcev algebra. Then, for any $x,y,z,t\in A$,
    \begin{align*}
 &\alpha([y, z]) \cdot \alpha(x \cdot t) + [[x, y], \alpha(z)]\cdot \alpha^{2}(t) + \alpha^{2}(y)\cdot ([x, z] \cdot \alpha(t))\\& - \alpha^{2}(x)\cdot (\alpha(y)\cdot (z\cdot t)) + \alpha^{2}(z)\cdot (\alpha(x)\cdot (y\cdot t))\\
 &=  \Big( L_{\cdot}(\alpha[y,z])L_{\cdot}(\alpha(x))\alpha +  L_{\cdot}([[x,y],\alpha(z)])\alpha^{2} +  L_{\cdot}(\alpha^{2}(y)) L_{\cdot}([x,z])\alpha\\
  &- L_{\cdot}(\alpha^{2}(x))L_{\cdot}(\alpha(y))L_{\cdot}(z) + L_{\cdot}(\alpha^{2}(z))L_{\cdot}(\alpha(x))L_{\cdot}(x) \Big) (t) = 0.
\qedhere \end{align*}
    \end{proof}
As in \cite{Bai2,K2}, we rephrase the definition of Kupershmidt operator in terms of Hom-pre-Malcev
algebras as follows.
   \begin{defn}\label{o-ophpm}
Let $(A, \cdot, \alpha)$ be a Hom-pre-Malcev algebra and $(V,\ell,r,\beta)$ be a bimodule. A linear map $T : V \to  A $ is called a Kupershmidt operator associated to $(V,\ell,r,\beta)$
 if $T$ satisfies
 \begin{align}
 T\circ \beta &= \alpha \circ T,\\
 T (a) \cdot T (b)&= T \big(\ell(T (a))b + r(T (b))a\big), \quad\forall \  a, b \in V.
\end{align}
\end{defn}
\begin{rem}
Let $T$ is a Kupershmidt operator of a Hom-pre-Malcev algebra $(A, \cdot, \alpha)$ associated to $(V, \ell, r, \beta)$.
Then $T$ is a Kupershmidt operator of its associated Hom-MalcevRep pair  $(A, [-, -], \alpha,\ell - r, \beta)$.
\end{rem}
\begin{proof}
By Proposition \ref{rephompremalcev==rephommalcev}, for all $a, b \in V,$
\begin{align*}
[T(a), T(b)]& = T(a) \cdot T(b) - T(b) \cdot T(a)\\
& = T (\ell(T (a))b + r(T (b))a) - T (\ell(T (b))a + r(T (a))b)\\
& = T ((\ell- r)(T (a))b - (\ell -r)(T (b))a).
\qedhere \end{align*}
\end{proof}
\begin{thm}
  Let $(V, \ell, r, \beta)$ be a bimodule over the multiplicative Hom-pre-Malcev-algebra
$(A, \cdot, \alpha)$, and let $A=(A, [-,-], \alpha)$ be the associated Hom-Malcev algebra.
Then both
$(V, \ell^\alpha, \beta)$ and  $(V, \ell^\alpha- r^\alpha, \beta)$ are representations over $(A, [-,-], \alpha)$, where
\begin{align*}
& \ell^\alpha=\ell\circ(\alpha^2\otimes Id) \quad \text{and}\quad r^\alpha=r\circ(\alpha^2\otimes Id).
\end{align*}
\end{thm}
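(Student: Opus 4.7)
The plan is to reduce this theorem directly to two results already established in the excerpt, so the proof becomes essentially a composition of known facts. First, I would invoke Proposition \ref{rephompremalcev==rephommalcev}, which asserts that a bimodule $(V,\ell,r,\beta)$ of the Hom-pre-Malcev algebra $(A,\cdot,\alpha)$ yields both $(V,\ell,\beta)$ and $(V,\ell-r,\beta)$ as representations of the associated Hom-Malcev algebra $(A,[-,-],\alpha)$. This transports the problem entirely into the Hom-Malcev setting, where we already have the relevant twisting machinery.

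Next, I would apply the twisting theorem established earlier (the one stating that from any representation $(V,\varrho,\beta)$ of a multiplicative Hom-Malcev algebra $(A,[-,-],\alpha)$ one obtains a new representation $(V,\varrho^\alpha,\beta)$ defined by $\varrho^\alpha(x)(a)=\varrho(\alpha^{2}(x))(a)$). Applying this twice, with $\varrho=\ell$ and with $\varrho=\ell-r$, yields that $(V,\ell^\alpha,\beta)$ and $(V,(\ell-r)^\alpha,\beta)$ are both representations of $(A,[-,-],\alpha)$. The required multiplicativity of $\alpha$ is exactly the hypothesis of the theorem.

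Finally, I would observe that the construction $\varrho\mapsto\varrho^\alpha=\varrho\circ(\alpha^{2}\otimes \mathrm{Id})$ is linear in $\varrho$, hence $(\ell-r)^\alpha=\ell^\alpha-r^\alpha$. Substituting this equality identifies the second representation as $(V,\ell^\alpha-r^\alpha,\beta)$, which is what the statement demands.

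The main potential obstacle is purely bookkeeping: one must confirm that all twisting operations commute with the step of passing to the associated Hom-Malcev algebra, i.e.\ that the commutator bracket $[-,-]$ of $(A,\cdot,\alpha)$ is unchanged by the twisting of the representation (which it is, since the twist only affects the action on $V$, not the bracket on $A$). No genuine identity needs to be verified from scratch; the calculation hidden in \eqref{rep5} and in the Hom-Malcev twisting theorem already does all the work.
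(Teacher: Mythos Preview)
Your proposal is correct. The paper's own proof and yours share the same mathematical content but are organized differently: the paper verifies directly that $\ell^\alpha$ satisfies the Hom-Malcev representation identity \eqref{representation H-M} by using multiplicativity of $\alpha$ and then invoking the bimodule axiom \eqref{rep5} (which is literally that identity for $\ell$) at the shifted arguments $\alpha^2(x),\alpha^2(y),\alpha^2(z)$. This is precisely the computation hidden inside the combination of Proposition~\ref{rephompremalcev==rephommalcev} and the twisting theorem you cite, so you are not using a different idea---just packaging the same two steps as black-box invocations rather than unrolling them. Your modular version has the minor advantage that the second claim about $\ell^\alpha-r^\alpha$ follows in one line from linearity of the twist, whereas the paper only writes out the $\ell^\alpha$ case explicitly and leaves the other implicit.
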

\begin{proof}
We  prove that $\ell^\alpha$ satisfy   \eqref{representation H-M}.
For any elements $x, y,z \in A$ and $a\in V$,
\begin{align*}
   \ell^{\alpha}([[x,y],\alpha (z)])\beta^{2}(a) &\stackrel{\eqref{Multiplicativityalpha}}{=}   \ell([[\alpha^{2}(x),\alpha^{2}(y)],\alpha^{3} (z)])\beta^{2}(a)\\
   & \stackrel{\eqref{rep5}}{=}   \ell(\alpha^{4} (x))\ell(\alpha^{3} (y))\ell(\alpha^{2}(z))(a) - \ell(\alpha^{4} (z))\ell(\alpha^{3} (x))\ell(\alpha^{2}(y))(a) \\
   & \quad \quad - \ell(\alpha^{3} ([y,z])\ell(\alpha^{3} (x))\beta(a)- \ell(\alpha^{4}(y))\ell(\alpha^{2}([x,z]))\beta(a)\\
  & = \ell^{\alpha}(\alpha^{2} (x))\ell^{\alpha}(\alpha (y))\ell^{\alpha}(z)(a) -\ell^{\alpha}(\alpha^{2} (z))\ell^{\alpha}(\alpha (x))\ell^{\alpha}(y)(a) \\
   & \quad \quad -  \ell^{\alpha}(\alpha ([y,z])\ell^{\alpha}(\alpha (x))\beta(a)+ \ell^{\alpha}(\alpha^{2}(y))\ell^{\alpha}([x,z])\beta(a).
   \qedhere
 \end{align*}

\end{proof}
\section{Infinitesimal deformations of a Kupershmidt operator of Hom-Malcev algebras}
\label{sec:infdef}~~
In \cite{HuLiuSheng, Sun}, the authors  studied the Kupershmidt-(dual-)Nijenhuis structures on a  Lie  and an alternative  algebra
with a representation. Recently, Sami Mabrouk in \cite{Mabrouk} studies the infinitesimal deformations of a
 Kupershmidt operator of Malcev algebras. Inspired by these works, we consider  the infinitesimal deformations of a
 Kupershmidt operator of Hom-Malcev algebras. In particular, we introduce the notion of a
Nijenhuis element associated to  a Kupershmidt operator, which gives
rise to a trivial infinitesimal deformation of the
 Kupershmidt operator. Their relationship with the infinitesimal deformations of
the associated Hom-pre-Malcev algebra is also studied.
    \begin{defn}\label{morphism of O-operators}
      Let $T$ and $T'$ be   two Kupershmidt operators on a  \textsf{Hom-MalcevRep} pair\\  $( A,[-,-],\alpha,\varrho,\beta)$. A  morphism from $T'$ to $T$ consists of a  Hom-Malcev algebra homomorphism  $\varphi_ A: A\longrightarrow A$ and a linear map $\varphi_V:V\longrightarrow V$ such that
      \begin{eqnarray}
        T\circ \phi_V&=&\phi_ A\circ T',\label{defi:isocon1}\\
        \varphi_V\circ \beta  &=&\beta\circ \varphi_V,\label{defi:isocon2}\\
        \varphi_V\varrho(x)(a)&=&\varrho(\varphi_ A(x))(\varphi_V(a)),\quad\forall \  x\in A, a\in V.\label{defi:isocon3}
      \end{eqnarray}
      In particular, if both $\varphi_ A$ and $\varphi_V$ are  invertible,  $(\varphi_ A,\varphi_V)$ is called an  isomorphism  from $T'$ to $T$.
    \label{defi:isoO}
    \end{defn}

\begin{prop}
Let $T$ and $T'$ be two  Kupershmidt operators on a \textsf{Hom-MalcevRep} pair  $( A,[-,-],\alpha,\varrho,\beta)$ and $(\varphi_ A,\varphi_V)$   a morphism from $T'$ to $T$. Then $\varphi_V$ is a morphism  of Hom-pre-Malcev algebras from $(V,\cdot')$ to $(V,\cdot)$ defined by  \eqref{eq:hommalcTohompremalc}.
\end{prop}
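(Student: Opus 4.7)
The plan is to verify directly the two axioms of a Hom-pre-Malcev morphism for $\varphi_V$: compatibility with the twisting maps $\beta$ on both sides, and multiplicativity with respect to the induced products $\cdot'$ and $\cdot$. Recall that by Proposition \ref{hommalcev==>hompremalcev} the products on $V$ are given, for $a, b \in V$, by $a \cdot' b = \varrho(T'(a))b$ and $a \cdot b = \varrho(T(a))b$, and the twisting map on $V$ is $\beta$ in both cases.

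First, the compatibility with the twisting maps is immediate, since condition \eqref{defi:isocon2} of Definition \ref{morphism of O-operators} states precisely that $\varphi_V \circ \beta = \beta \circ \varphi_V$, so nothing more is required at this step.

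Next, to check multiplicativity, I would expand $\varphi_V(a \cdot' b)$ and rewrite it using the three morphism conditions in order. Starting from the definition, $\varphi_V(a \cdot' b) = \varphi_V(\varrho(T'(a))b)$. Applying the intertwining relation \eqref{defi:isocon3} with $x = T'(a)$ transforms this into $\varrho(\varphi_A(T'(a)))(\varphi_V(b))$. Then applying \eqref{defi:isocon1}, which says $T \circ \varphi_V = \varphi_A \circ T'$, allows us to rewrite $\varphi_A(T'(a)) = T(\varphi_V(a))$, yielding $\varrho(T(\varphi_V(a)))(\varphi_V(b)) = \varphi_V(a) \cdot \varphi_V(b)$ by the definition of $\cdot$.

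There is no real obstacle here: the statement is essentially a bookkeeping check that the three conditions of Definition \ref{morphism of O-operators} are precisely what is needed to turn $\varphi_V$ into a Hom-pre-Malcev morphism between the two induced structures. The only mild subtlety is to apply the conditions in the right sequence (intertwining of $\varrho$ first, then the commuting square relating $T$ and $T'$), but this is a routine verification.
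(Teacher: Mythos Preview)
Your proof is correct and follows essentially the same route as the paper's own argument: both verify multiplicativity via the chain $\varphi_V(a\cdot' b)=\varphi_V(\varrho(T'(a))b)=\varrho(\varphi_A(T'(a)))(\varphi_V(b))=\varrho(T(\varphi_V(a)))(\varphi_V(b))=\varphi_V(a)\cdot\varphi_V(b)$, applying \eqref{defi:isocon3} and then \eqref{defi:isocon1} in exactly that order. The only difference is that you also spell out the compatibility $\varphi_V\circ\beta=\beta\circ\varphi_V$ from \eqref{defi:isocon2}, which the paper leaves implicit.
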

\begin{proof}
According to  \eqref{defi:isocon1}-\eqref{defi:isocon3}, for any  $a,b\in V$, we
have
\begin{align*}
\varphi_V(a\cdot' b)&=\varphi_V
\varrho(T'(a))(b)=\varrho(\varphi_ A(T'(a)))(\varphi_V(b))\\
&=\varrho(T(\varphi_V(a)))(\varphi_V(v))
=\varphi_V(a)\cdot \varphi_V(b).
\qedhere
\end{align*}

\end{proof}
   \begin{defn}
    Let $T$   be a Kupershmidt operator  on a \textsf{Hom-MalcevRep} pair  $( A,[-,-],\alpha,\varrho,\beta)$ and $\mathcal{T}:V\longrightarrow A$ a linear map. If  $T_t=T+t\mathcal{T}$ is still a Kupershmidt operator  on the  \textsf{Hom-MalcevRep} pair  $( A,[-,-],\alpha,\varrho,\beta)$ for all $t$, we say that $\mathcal{T}$ generates a one-parameter infinitesimal deformation of the  Kupershmidt operator $T$.
    \end{defn}
By direct computation, we can check that $T_t=T+t\mathcal{T}$ is a one-parameter
infinitesimal deformation of a Kupershmidt operator $T$ if and only if for any $a,b\in V$,
{\small \begin{align}\label{commuting condition}
\mathcal{T}\circ \beta &=\alpha\circ \mathcal{T},\\
[T(a),\mathcal{T} (b)]+[\mathcal{T}, (a),T(b)]&=T(\varrho(\mathcal{T} (a))(b)-\varrho(\mathcal{T} (b))(a))+\mathcal{T}(\varrho(T(a))(b)-\varrho(T(b))(a)),\label{eq:deform1}\\
[\mathcal{T}(a),\mathcal{T} (b)]&=\mathcal{T}(\varrho(\mathcal{T} (a))(b)-\varrho(\mathcal{T}(b) )(a)).
\label{eq:deform2}
\end{align}}
Note that  \eqref{commuting condition} and \eqref{eq:deform2} mean that $\mathcal{T}$ is a Kupershmidt operator on the \textsf{Hom-MalcevRep} pair  $( A,[-,-],\alpha,\varrho,\beta)$.

Now turning to a Hom-pre-Malcev algebra $(V,\cdot,\beta)$ given in   \eqref{eq:hommalcTohompremalc}, let  $\psi:\otimes^2V\longrightarrow V$ be a linear map. If for any $t\in\mathbb{K}$, the multiplication $''\cdot_t''$ defined by
$$
a\cdot_t b:=a\cdot b+t\psi(a,b), \;\forall \  a,b\in V,
$$
also gives a Hom-pre-Malcev algebra structure, we say that $\psi$ generates a  one-parameter infinitesimal deformation of the Hom-pre-Malcev algebra $(V,\cdot,\beta)$.

The two types of infinitesimal deformations are related as follows.

\begin{prop}
 If $\mathcal{T}$ generates a one-parameter infinitesimal deformation of a Kupershmidt operator $T$ on a \textsf{Hom-MalcevRep} pair  $( A,[-,-],\alpha,\varrho,\beta)$, then the product $\psi_\mathcal{T}$ on $V$ defined by
   $$
   \psi_\mathcal{T}(a,b):=\varrho(\mathcal{T} (a))(b),\quad\forall \  a,b\in V,
   $$
generates a one-parameter infinitesimal deformation of the associated Hom-pre-Malcev algebra $(V,\cdot,\beta)$.
\end{prop}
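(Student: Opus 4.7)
The plan is to exploit Proposition \ref{hommalcev==>hompremalcev}, which produces a Hom-pre-Malcev algebra from any Kupershmidt operator via the formula \eqref{eq:hommalcTohompremalc}. Since $T_t = T + t\mathcal{T}$ is assumed to be a Kupershmidt operator on the pair $(A,[-,-],\alpha,\varrho,\beta)$ for every value of the parameter $t \in \mathbb{K}$, we can simply apply that proposition to $T_t$ directly.

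More concretely, I would first rewrite the deformed product on $V$ in terms of $T_t$: using $\varrho$-linearity we get
\[
a \cdot_t b \;=\; a \cdot b + t\, \psi_{\mathcal{T}}(a,b) \;=\; \varrho(T(a))(b) + t\,\varrho(\mathcal{T}(a))(b) \;=\; \varrho\bigl((T + t\mathcal{T})(a)\bigr)(b) \;=\; \varrho(T_t(a))(b).
\]
Thus $(V,\cdot_t)$ is precisely the Hom-pre-Malcev algebra associated, through \eqref{eq:hommalcTohompremalc}, to the Kupershmidt operator $T_t$. Proposition \ref{hommalcev==>hompremalcev} then immediately yields that $(V,\cdot_t,\beta)$ satisfies the Hom-pre-Malcev identity \eqref{HPM}, once we also observe that the twisting map is indeed $\beta$ in all three cases (which is automatic because $T_t \circ \beta = T\circ\beta + t\,\mathcal{T}\circ\beta = \alpha\circ T + t\,\alpha\circ\mathcal{T} = \alpha\circ T_t$ by \eqref{commuting condition}).

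The only remaining routine verification is that $\psi_{\mathcal{T}}$ really does define a deformation in the precise sense of the paper, i.e.\ that $(V,\cdot_t,\beta)$ is a Hom-pre-Malcev algebra for every $t$ rather than only modulo $t^2$. This is where the hypothesis that $T_t$ is a Kupershmidt operator \emph{for all} $t$ is used in full strength; equivalently, it is where the conditions \eqref{eq:deform1} and \eqref{eq:deform2} enter through Proposition \ref{hommalcev==>hompremalcev} applied at each $t$.

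I do not anticipate a genuine obstacle here: the proof essentially reduces the statement to one already established. The only mild subtlety is conceptual, namely recognising that the linearity of $\varrho$ transports the linear-in-$t$ deformation of $T$ into a linear-in-$t$ deformation of the bilinear product, so that the two deformation problems become identified through \eqref{eq:hommalcTohompremalc}.
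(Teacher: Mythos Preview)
Your proposal is correct and follows essentially the same approach as the paper: both arguments identify $a\cdot_t b$ with $\varrho(T_t(a))(b)$ via the linearity of $\varrho$, and then invoke Proposition~\ref{hommalcev==>hompremalcev} applied to the Kupershmidt operator $T_t=T+t\mathcal{T}$ to conclude that $(V,\cdot_t,\beta)$ is a Hom-pre-Malcev algebra for every $t$. Your write-up is in fact slightly more explicit than the paper's (you spell out the verification that the twisting map remains $\beta$ via \eqref{commuting condition}), but the strategy is identical.
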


\begin{proof}
Denote by $''\cdot_t''$ the corresponding Hom-pre-Malcev algebra structure associated to the Kupershmidt operator $T+t\mathcal{T}$. Then we have
$$
a\cdot_t b=\varrho((T+t\mathcal{T})(a))(b)=\varrho(T(a))(b)+t\varrho(\mathcal{T}
(a))(b)=a\cdot b+t\psi_\mathcal{T} (a,b), \forall \  a,b\in V,
$$
which implies that $\psi_\mathcal{T}$ generates a one-parameter infinitesimal deformation of $(V,\cdot,\beta)$.
\end{proof}

\begin{cor}
 If $\mathcal{T}$ generates a one-parameter infinitesimal deformation of a Kupershmidt operator $T$ on a  \textsf{Hom-MalcevRep} pair  $( A,[-,-],\alpha,\varrho,\beta)$. Then the product $ \psi_\mathcal{T}$ on $V$ defined by
   $$
    \omega_\mathcal{T}(a,b):=\varrho(\mathcal{T} (a))(b)-\varrho(\mathcal{T} (b))(a),\quad\forall \  a,b\in V,
   $$
generates a one-parameter infinitesimal
deformation of the sub-adjacent  Hom-Malcev algebra $(V,[-,-]_T,\beta)$
of the associated Hom-pre-Malcev algebra $(V,\cdot,\beta)$.
\end{cor}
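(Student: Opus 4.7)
The plan is to derive this corollary directly from the immediately preceding proposition by passing through the commutator construction of Proposition \ref{prop:HompreMalcevHomMalcevadmis}. The key observation is that the sub-adjacent Hom-Malcev bracket on $V$ is obtained from the Hom-pre-Malcev product $\cdot$ by antisymmetrization, so any one-parameter deformation of $\cdot$ automatically induces a one-parameter deformation of $[-,-]_T$ by antisymmetrizing the deformation term.

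First I would invoke the preceding proposition to conclude that the product $\cdot_t$ defined by $a \cdot_t b = a \cdot b + t \psi_\mathcal{T}(a,b)$ endows $(V,\cdot_t,\beta)$ with the structure of a Hom-pre-Malcev algebra for every $t$, where $\psi_\mathcal{T}(a,b) = \varrho(\mathcal{T}(a))(b)$. Explicitly, this is the Hom-pre-Malcev algebra associated with the Kupershmidt operator $T_t = T + t\mathcal{T}$ via \eqref{eq:hommalcTohompremalc}. Then, by Proposition \ref{prop:HompreMalcevHomMalcevadmis} applied to $(V,\cdot_t,\beta)$, the commutator
\begin{equation*}
[a,b]_{T_t} := a \cdot_t b - b \cdot_t a
\end{equation*}
defines a Hom-Malcev algebra structure on $V$ (twisted by $\beta$) for every $t \in \mathbb{K}$.

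Next I would expand this commutator to identify the linear term in $t$:
\begin{align*}
[a,b]_{T_t} &= (a \cdot b - b \cdot a) + t\bigl(\psi_\mathcal{T}(a,b) - \psi_\mathcal{T}(b,a)\bigr)\\
&= [a,b]_T + t\bigl(\varrho(\mathcal{T}(a))(b) - \varrho(\mathcal{T}(b))(a)\bigr)\\
&= [a,b]_T + t\,\omega_\mathcal{T}(a,b).
\end{align*}
Since $[-,-]_{T_t}$ is a Hom-Malcev bracket on $V$ twisted by $\beta$ for every $t$, this exactly says that $\omega_\mathcal{T}$ generates a one-parameter infinitesimal deformation of the sub-adjacent Hom-Malcev algebra $(V,[-,-]_T,\beta)$.

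I do not expect any serious obstacle: the content of the corollary is really bookkeeping once the previous proposition is in hand, and the only identity needed is the commutativity of antisymmetrization with the scalar perturbation $t\psi_\mathcal{T}$. The condition $\omega_\mathcal{T} \circ \beta^{\otimes 2} = \beta \circ \omega_\mathcal{T}$, needed to interpret the deformation as multiplicative with respect to $\beta$, follows from $\mathcal{T} \circ \beta = \alpha \circ \mathcal{T}$ together with $\varrho(\alpha(x))\beta = \beta\varrho(x)$ from \eqref{rephommalcev}, so no additional compatibility needs to be verified.
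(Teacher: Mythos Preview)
Your proposal is correct and follows exactly the route the paper intends: the corollary is stated without proof because it is an immediate consequence of the preceding proposition via antisymmetrization, precisely as you argue by expanding $[a,b]_{T_t}=a\cdot_t b-b\cdot_t a=[a,b]_T+t\,\omega_\mathcal{T}(a,b)$.
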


\begin{defn}
 Let $T$ be a Kupershmidt operator on a  \textsf{Hom-MalcevRep} pair  $( A,[-,-],\alpha,\varrho,\beta)$. Two one-parameter
infinitesimal deformations $T^1_t=T+t\mathcal{T}_1$ and
$T^2_t=T+t\mathcal{T}_2$ are said to be  equivalent if there exists
an $x\in A$ such that $\alpha(x)=x$ and the pair  $(Id_ A+t ad_x,Id_V+t\varrho(x))$ is a
homomorphism   from $T^2_t$ to $T^1_t$.
\end{defn}
Let $(Id_ A+t ad_x,Id_V+t\varrho(x))$ be a homomorphism from
$T^2_t$ to $T^1_t$. Then $Id_ A+tad_x$ is a  Hom-Malcev algebra
endomorphism of $ A$. Thus, we have
$$
(Id_ A+t ad_x)[y,z]=[(Id_ A+t ad_x)(y),(Id_ A+t ad_x)(z)], \;\forall \  y,z\in  A,
$$
which implies that $x$ satisfies
\begin{equation}
 [[x,y],[x,z]]=0,\quad \forall \  y,z\in A.
 \label{eq:Nij1}
 \end{equation}
Then by \eqref{defi:isocon1}, we get
$$
(T+t\mathcal{T}_1)(Id_V+t\varrho(x))(a)=(Id_ A+t ad_x)(T+t\mathcal{T}_2)(a),\quad\forall \  a\in V,
$$
which implies
\begin{eqnarray}
 (\mathcal{T}_2-\mathcal{T}_1)(a)&=&T\varrho(x)(a)+[T(a),x],\label{eq:deforiso1} \\
  \mathcal{T}_1\varrho(x)(a)&=&[x,\mathcal{T}_2(a)], \; \forall \  a\in V.
  \label{eq:deforiso2}
\end{eqnarray}
Since $\alpha(x)=x$, by   \eqref{defi:isocon2}, we have
$$\beta\circ(Id_V+t\varrho(x))=(Id_{V}+t\varrho(x))\circ\beta.$$
Finally, \eqref{defi:isocon3} gives
$$
(Id_V+t\varrho(x))\varrho(y)(a)=\varrho((Id_ A+t ad_x)(y))(Id_V+t\varrho(x))(a),\quad \forall \  y\in A, a\in V,
$$
which implies that $x$ satisfies
\begin{equation}
  \varrho([x,y])\varrho(x)=0,\quad\forall \  y\in A.\label{eq:Nij2}
\end{equation}
\begin{defn}
Suppose that $T: V\rightarrow A$ is a Kupershmidt operator on a Hom-MalcevRep pair  $( A,[-,-],\alpha,\varrho,\beta)$. A linear deformation $T_t:T+t\mathcal{T}$ is said to be trivial if it is equivalent to the deformation $T_0=T$.
\end{defn}

    \begin{defn}
Let $T$ be a Kupershmidt operator on a  \textsf{Hom-MalcevRep} pair  $( A,[-,-],\alpha,\varrho,\beta)$. An element $x\in A$ is called a {\bf Nijenhuis element} associated to $T$ if $x$ satisfies $\alpha(x)=x$, the identities ~\eqref{eq:Nij1}, \eqref{eq:Nij2} and the identity
      \begin{eqnarray}
        ~[x,[T(a),x]+T\varrho(x)(a)]=0,\quad \forall \  a\in V.
         \label{eq:Nij3}
        \end{eqnarray}
   Denote by $Nij(T)$ the set of Nijenhuis elements associated to a Kupershmidt operator $T$.
    \end{defn}

By \eqref{eq:Nij1}-\eqref{eq:Nij2}, it is obvious that a
trivial one-parameter infinitesimal deformation gives rise to a
Nijenhuis element. The following result is in close analogue to
the fact that the differential of a Nijenhuis operator on a  Hom-Malcev
algebra generates a trivial one-parameter infinitesimal
deformation of the  Hom-Malcev algebra, justifying the notion of
Nijenhuis elements.

  \begin{thm}\label{thm:trivial}
   Let $T$ be a Kupershmidt operator on a  \textsf{Hom-MalcevRep} pair  $( A,[-,-],\alpha,\varrho,\beta)$. Then for any  $x\in Nij(T)$, $T_t:=T+t \mathcal{T}$ with $\mathcal{T}(a)=T\varrho(x)(a)+[T(a),x]$, for all $a\in V$, is a trivial one-parameter infinitesimal  deformation of the  Kupershmidt operator $T$.
\end{thm}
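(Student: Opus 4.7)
The plan is to verify that the pair $(\varphi_A,\varphi_V):=(\mathrm{Id}_A+t\,\mathrm{ad}_x,\mathrm{Id}_V+t\varrho(x))$ is a morphism from $T_t=T+t\mathcal{T}$ to the trivial deformation $T_0=T$ in the sense of Definition~\ref{defi:isoO}. Once the four clauses of that definition are checked, the Kupershmidt identity of $T$ is transported to $T_t$, so this simultaneously shows that $\mathcal{T}$ generates a genuine one-parameter infinitesimal deformation and that it is equivalent to the trivial one.

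I would start with the easy pointwise compatibility. To get $\mathcal{T}\circ\beta=\alpha\circ\mathcal{T}$, I combine $T\beta=\alpha T$, the identity $\mathrm{ad}_x\circ\alpha=\alpha\circ\mathrm{ad}_x$ which follows from $\alpha(x)=x$, and the representation equality $\beta\varrho(x)=\varrho(\alpha(x))\beta=\varrho(x)\beta$; direct substitution into $\mathcal{T}(\beta(a))=T\varrho(x)\beta(a)+[T\beta(a),x]$ collapses to $\alpha\mathcal{T}(a)$. Condition~\eqref{defi:isocon2}, i.e.\ $\varphi_V\beta=\beta\varphi_V$, reduces to the same equality $\beta\varrho(x)=\varrho(x)\beta$.

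The central step is condition~\eqref{defi:isocon1}, namely $T\circ\varphi_V=\varphi_A\circ T_t$. Expanding both sides in powers of $t$: the constant term is tautological, the coefficient of $t$ reproduces exactly the prescribed formula $\mathcal{T}(a)=T\varrho(x)(a)+[T(a),x]$, and the coefficient of $t^2$ reads $[x,\mathcal{T}(a)]=0$, which is precisely the Nijenhuis condition~\eqref{eq:Nij3}. Similarly, the requirement that $\varphi_A$ be a Hom-Malcev algebra endomorphism, expanded to order $t^2$, yields $[[x,y],[x,z]]=0$, namely~\eqref{eq:Nij1}; and the equivariance condition~\eqref{defi:isocon3}, at order $t^2$, gives $\varrho([x,y])\varrho(x)=0$, namely~\eqref{eq:Nij2}.

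The step I expect to be the main obstacle is controlling the order-$t$ coefficients of the Hom-Malcev homomorphism condition on $\varphi_A$ and of the equivariance~\eqref{defi:isocon3}, which formally demand Jacobi-type identities $[x,[y,z]]=[[x,y],z]+[y,[x,z]]$ and $\varrho([x,y])=[\varrho(x),\varrho(y)]$ that are not automatic in a Hom-Malcev algebra or its representation. Reconciling them calls for a careful combination of the full Hom-Malcev identity~\eqref{Hom-Malcev} applied to quadruples involving $x$, the representation identity~\eqref{representation H-M}, and the Kupershmidt identity of $T$ itself, paralleling the calculation carried out in \cite{Mabrouk} for the ordinary Malcev case. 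Once all four clauses of Definition~\ref{defi:isoO} are verified, the pair $(\varphi_A,\varphi_V)$ is a morphism from $T_t$ to $T$, so $T_t$ is a Kupershmidt operator and the deformation is trivial, completing the proof.
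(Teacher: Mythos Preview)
Your strategy has a genuine gap at exactly the point you flag as ``the main obstacle,'' and the remedy you sketch cannot work. The order-$t$ coefficient of the condition ``$\varphi_A=\mathrm{Id}_A+t\,\mathrm{ad}_x$ is a Hom-Malcev endomorphism'' is the derivation identity $[x,[y,z]]=[[x,y],z]+[y,[x,z]]$ for \emph{all} $y,z\in A$, and the order-$t$ coefficient of \eqref{defi:isocon3} is $\varrho([x,y])=[\varrho(x),\varrho(y)]$ for all $y\in A$. Neither of these is part of the Nijenhuis axioms \eqref{eq:Nij1}--\eqref{eq:Nij3}, and neither holds in a general (Hom-)Malcev algebra or representation: they are precisely the Lie identities that the Malcev setting relaxes. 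Your proposed fix---invoking the Hom-Malcev identity \eqref{Hom-Malcev}, the representation identity \eqref{representation H-M}, and the Kupershmidt relation for $T$---cannot rescue the first of these, because $[x,[y,z]]=[[x,y],z]+[y,[x,z]]$ is a statement purely about the bracket on $A$ for arbitrary $y,z$; it contains no $T$ and no $\varrho$, so there is nothing for those auxiliary identities to bite on. Hence $(\varphi_A,\varphi_V)$ is not a morphism in the sense of Definition~\ref{defi:isoO}, and you cannot transport the Kupershmidt identity from $T$ to $T_t$ along it as you propose.

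The paper avoids this trap by taking the opposite route: it \emph{first} proves that $T_t$ is a Kupershmidt operator by verifying \eqref{commuting condition} and \eqref{eq:deform2} directly, and only \emph{afterwards} appeals to $(\mathrm{Id}_A+t\,\mathrm{ad}_x,\mathrm{Id}_V+t\varrho(x))$ for the triviality statement. The substantive work is an explicit expansion of $[\mathcal{T}(a),\mathcal{T}(b)]-\mathcal{T}\big(\varrho(\mathcal{T}(a))b-\varrho(\mathcal{T}(b))a\big)$, followed by repeated use of the Kupershmidt identity for $T$ together with \eqref{eq:Nij1}, \eqref{eq:Nij2}, \eqref{eq:Nij3} to cancel every term; at no stage does this computation require $\mathrm{ad}_x$ to be a derivation, which is why it goes through. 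That direct computation is the heart of the proof, and it is exactly what your proposal omits.
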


\begin{proof}
First $\mathcal{T}$ is closed since $\mathcal{T}(a)=T\varrho(x)(a)+[T(a),x]$.
To show that $\mathcal{T}$ generates a trivial one-parameter infinitesimal deformation of
the  Kupershmidt operator $T$, we only need to verify that
\eqref{commuting condition} and \eqref{eq:deform2} hold. By definition of $\mathcal{T}$ and using the condition $\alpha(x)=x$ we have,
 for all $a\in V$,
\begin{align*}
\alpha\circ \mathcal{T}(a)&=\alpha( T\varrho(x)(a)+[T(a),x])=T(\beta(\varrho(x)(a))+\alpha([T(a),x]))\\&=T(\varrho(\alpha(x))(\beta(a)))+[\alpha(T(a)),\alpha(x)]))=T(\varrho(x)(\beta(a)))+[T(\beta(a)),x]))\\&=\mathcal{T}(\beta(a)).
\end{align*}
Then  $\mathcal{T}$ satisfies \eqref{commuting condition}.
   Moreover, according to   \eqref{eq:Nij1}, we have,
for any $a,b\in V$,
\begin{align*}
      &[\mathcal{T} (a),\mathcal{T} (b)]-\mathcal{T}(\varrho(\mathcal{T}(a) )(b)-\varrho(\mathcal{T} (b))(a))\\
      &= [[T(a),x],[T(b),x]]+[[T(a),x],T\varrho(x)(b)]+[T\varrho(x)(a),[T(b),x]]+[T\varrho(x)(a),T\varrho(x)(b)]\\
      &-[T\varrho([T(a),x])(b),x]-[T\varrho(T\varrho(x)(a))(b),x]+[T\varrho([T(b),x])(a),x]+[T\varrho(T\varrho(x)(b))(a),x]\\
      &-T\varrho(x)\varrho([T(a),x])(b)-T\varrho(x)\varrho(T\varrho(x)(a))(b)+T\varrho(x)\varrho([T(b),x])(a)+T\varrho(x)\varrho(T\varrho(x)(b))(a)\\
      &=[[T(a),x],T\varrho(x)(b)]+[T\varrho(x)(a),[T(b),x]]+\underline{T\varrho(T\varrho(x)(a))\varrho(x)(b)}\underbrace{-T\varrho(T\varrho(x)(b))\varrho(x)(a)}\\
      &-[T\varrho([T(a),x])(b),x]-[T\varrho(T\varrho(x)(a))(b),x]+[T\varrho([T(b),x])(a),x]+[T\varrho(T\varrho(x)(b))(a),x]\\
      &\underline{-T\varrho(x)\varrho([T(a),x])(b)-T\varrho(x)\varrho(T\varrho(x)(a))(b)}\underbrace{+T\varrho(x)\varrho([T(b),x])(a)+T\varrho(x)\varrho(T\varrho(x)(b))(a)}.
    \end{align*}
 The under-braced terms add to zero  using \eqref{eq:Nij2} and \eqref{eq:Nij3} and  the underlined terms add to zero by the same computation.
For the other terms, by \eqref{eq:Nij1} and \eqref{eq:Nij3}, we have
\begin{eqnarray*}
  &&[[T(a),x],T\varrho(x)(b)]-[T\varrho([T(a),x])(b),x]+[T\varrho(T\varrho(x)(b))(a),x]\\
  &=&[T(a),[x,T\varrho(x)(b)]]+[[T(a),T\varrho(x)(b)],x]-[T\varrho([T(a),x])(b),x]+[T\varrho(T\varrho(x)(b))(a),x]\\
  &=&-[T(a),[x,[T(b),x]]]+[T\varrho(T(a))\varrho(x)(b),x]-[T\varrho([T(a),x])(b),x]\\
  &=&-[x,[T(a),[T(b),x]]]+[T\varrho(x)\varrho(T(a))(b),x]\\
  &=&-[x,[T(a),[T(b),x]]]+[x,[T\varrho(T(a))(b),x]].
\end{eqnarray*}
Similarly, we have
\begin{eqnarray*}
&&[T\varrho(x)(a),[T(b),x]]-[T\varrho(T\varrho(x)(a))(b),x]+[T\varrho([T(b),x])(a),x]\\
 &=&[x,[T(b),[T(a),x]]]-[x,[T\varrho(T(b))(a),x]].
\end{eqnarray*}
Thus
 \begin{eqnarray*}
      &&[\mathcal{T} (a),\mathcal{T} (b)]-\mathcal{T}(\varrho(\mathcal{T} (a))(b)-\varrho(\mathcal{T} (b))(a))=0,
\end{eqnarray*}
Then,  $\mathcal{T}$ generates  a   one-parameter infinitesimal  deformation of $T$.

Further, since $x$ is a Nijenhuis element, it is straightforward that $(Id_ A+t ad_x,Id_V+t\varrho(x))$ gives the
desired homomorphism between $T_t$ and $T$. Thus, the deformation is trivial.
\end{proof}

Now we introduce the notion of a Nijenhuis operator on a Hom-pre-Malcev algebra.

\begin{defn}
  A linear map $N:A\longrightarrow A$ on a Hom-pre-Malcev algebra $(A,\cdot,\alpha) $ is called a {\bf Nijenhuis operator} if
   \begin{equation}
    N\circ\alpha=\alpha\circ N,
  \end{equation}
  \begin{equation}
    N(x)\cdot N(y)=N\big(N(x)\cdot y+ x\cdot  N(y)-N(x\cdot y)\big),\quad\forall \  x,y\in A.
  \end{equation}
   \label{df:nijenhuisoponhpm}
\end{defn}
\begin{prop}
If $N$ is a Nijenhuis operator on a Hom-pre-Malcev algebra $(A, \cdot, \alpha)$, then $N$ is a
Nijenhuis operator on the sub-adjacent Hom-Malcev algebra $(A,[-,-],\alpha)$.
\end{prop}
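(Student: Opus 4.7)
The plan is to verify the two defining conditions of a Nijenhuis operator on the sub-adjacent Hom-Malcev algebra directly from the corresponding conditions on the Hom-pre-Malcev algebra, exploiting the fact that the Hom-Malcev bracket is just the commutator \eqref{commutator}. The commuting condition $N\circ\alpha=\alpha\circ N$ is exactly the first axiom in Definition \ref{df:nijenhuisoponhpm}, so there is nothing to do there.

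The only real content is the Nijenhuis identity. I would start from $[N(x),N(y)]=N(x)\cdot N(y)-N(y)\cdot N(x)$, then apply the Hom-pre-Malcev Nijenhuis relation to each term:
\begin{align*}
N(x)\cdot N(y) &= N\bigl(N(x)\cdot y+x\cdot N(y)-N(x\cdot y)\bigr),\\
N(y)\cdot N(x) &= N\bigl(N(y)\cdot x+y\cdot N(x)-N(y\cdot x)\bigr).
\end{align*}
Subtracting and regrouping the terms into commutators via $[u,v]=u\cdot v-v\cdot u$ should yield
\[
[N(x),N(y)] = N\bigl([N(x),y]-[N(y),x]-N([x,y])\bigr),
\]
which is precisely the Hom-Malcev Nijenhuis identity recalled before Proposition~4.1.

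I do not expect any genuine obstacle: the argument is a one-step rewriting that uses only bilinearity of $N$, the Nijenhuis identity for $\cdot$, and the definition of the commutator. The only thing to be careful about is bookkeeping of signs when pairing $N(x)\cdot y$ with $-y\cdot N(x)$ (giving $[N(x),y]$) and $x\cdot N(y)$ with $-N(y)\cdot x$ (giving $-[N(y),x]$), and similarly ensuring that $-N(x\cdot y)+N(y\cdot x)$ assembles into $-N([x,y])$. Since these are straightforward, the proposition follows in a couple of lines without needing any of the more delicate structure (such as representations, Kupershmidt operators, or the Hom-Malcev identity \eqref{Hom-Malcev}).
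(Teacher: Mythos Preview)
Your proposal is correct and follows essentially the same approach as the paper: after noting that $N\circ\alpha=\alpha\circ N$ is immediate, the paper expands $[N(x),N(y)]=N(x)\cdot N(y)-N(y)\cdot N(x)$, applies the Hom-pre-Malcev Nijenhuis identity to each term, and regroups into commutators to obtain $N\big([N(x),y]+[x,N(y)]-N[x,y]\big)$. The only cosmetic difference is that the paper writes $[x,N(y)]$ where you write $-[N(y),x]$, which of course coincide by antisymmetry.
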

\begin{proof}
Since $N$ is a Nijenhuis operator on $(A, \cdot, \alpha)$, we have $ N\circ\alpha=\alpha\circ N$. For all $x, y\in A$, by Proposition~\ref{prop:HompreMalcevHomMalcevadmis} and Definition ~\ref{df:nijenhuisoponhpm} , we have
\begin{align*}
[N(x),N(y)] &= N(x) \cdot N(y) - N(y) \cdot N(x)\\
&= N\big(N(x) \cdot y + x \cdot N(y) - N(x\cdot y) - N(y) \cdot x - y \cdot N(x) + N(y \cdot x)\big)\\
&= N\big([N(x), y] + [x,N(y)] - N[x, y]\big).
\end{align*}
Thus, $N$ is a Nijenhuis operator on $(A,[-,-],\alpha)$.
\end{proof}
For its connection with a Nijenhuis element associated to
a Kupershmidt operator, we   have

\begin{prop}
Let $x\in A$ be a Nijenhuis element associated to a
 Kupershmidt operator $T$ on a  \textsf{Hom-MalcevRep} pair $(A,[-,-],\alpha,\varrho,\beta)$. Then $\varrho(x)$ is a Nijenhuis operator
on the associated Hom-pre-Malcev algebra $(V,\cdot, \beta)$.
    \end{prop}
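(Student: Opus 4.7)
The plan is to extract the Nijenhuis identity for $N := \varrho(x)$ from the equivalence produced by Theorem~\ref{thm:trivial}. The twist-compatibility $N\circ\beta = \beta\circ N$ is immediate: since $\alpha(x)=x$, the representation identity \eqref{rephommalcev} gives $\varrho(x)\circ\beta = \varrho(\alpha(x))\circ\beta = \beta\circ\varrho(x)$. Hence it only remains to establish the quadratic Nijenhuis relation
\[
N(a)\cdot N(b) \;=\; N\bigl(N(a)\cdot b + a\cdot N(b) - N(a\cdot b)\bigr)
\]
from Definition~\ref{df:nijenhuisoponhpm}.

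Setting $\mathcal{T}(a) := T\varrho(x)(a) + [T(a),x]$, Theorem~\ref{thm:trivial} asserts that $T_t := T + t\mathcal{T}$ is a Kupershmidt operator on $(A,[-,-],\alpha,\varrho,\beta)$ for every $t$, and that $(Id_A + t\,ad_x,\, Id_V + tN)$ is a morphism of Kupershmidt operators from $T_t$ to $T_0 = T$. By the morphism proposition following Definition~\ref{defi:isoO}, the second component $\varphi_V := Id_V + tN$ is therefore a morphism of Hom-pre-Malcev algebras from $(V,\cdot_t,\beta)$ to $(V,\cdot,\beta)$, where
\[
a\cdot_t b \;=\; \varrho(T_t(a))(b) \;=\; a\cdot b + t\,\varrho(\mathcal{T}(a))(b).
\]
I would then expand the morphism identity $\varphi_V(a\cdot_t b) = \varphi_V(a)\cdot\varphi_V(b)$ as a polynomial in $t$. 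The coefficient of $t$ gives
\[
\varrho(\mathcal{T}(a))(b) \;=\; N(a)\cdot b + a\cdot N(b) - N(a\cdot b),
\]
and the coefficient of $t^2$ gives $N\bigl(\varrho(\mathcal{T}(a))(b)\bigr) = N(a)\cdot N(b)$. Substituting the first identity into the second produces exactly the desired Nijenhuis relation.

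The only delicacy is ensuring that $(Id_A + t\,ad_x,\, Id_V + tN)$ is a genuine morphism of Kupershmidt operators for every value of $t$, so that the Hom-pre-Malcev morphism identity is a true polynomial identity in $t$ from which one may extract coefficients. This is exactly what the Nijenhuis-element conditions $\alpha(x)=x$ and \eqref{eq:Nij1}--\eqref{eq:Nij3} guarantee, as already verified in the proof of Theorem~\ref{thm:trivial}; no further computation is needed and the argument requires no direct manipulation of the five-term Hom-pre-Malcev axiom \eqref{HPM}.
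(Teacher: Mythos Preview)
Your argument is correct and takes a genuinely different route from the paper's. The paper proves the Nijenhuis identity for $N=\varrho(x)$ by a direct computation: it expands $N(N(a)\cdot b+a\cdot N(b)-N(a\cdot b))-N(a)\cdot N(b)$ using $a\cdot b=\varrho(T(a))b$, regroups into commutators $[\varrho(x),\varrho(-)]$, and then invokes \eqref{eq:Nij2} and \eqref{eq:Nij3} to obtain $\varrho([x,T\varrho(x)(a)+[T(a),x]])(b)=0$. By contrast, you package everything into the equivalence $(Id_A+t\,ad_x,\,Id_V+t\varrho(x))$ from $T_t$ to $T$ supplied by Theorem~\ref{thm:trivial}, transfer it via the morphism proposition to a Hom-pre-Malcev algebra morphism $Id_V+tN:(V,\cdot_t)\to(V,\cdot)$, and read off the Nijenhuis identity from the $t^1$- and $t^2$-coefficients. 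Your extraction of coefficients is correct as written.

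What each approach buys: the paper's computation is self-contained and shows exactly which Nijenhuis-element axioms enter where, while your argument is cleaner and conceptual, revealing that the Nijenhuis property of $\varrho(x)$ is the \emph{shadow} on $(V,\cdot)$ of the trivialising homomorphism already built in Theorem~\ref{thm:trivial}. The only caveat is that you are leaning entirely on the ``it is straightforward'' clause at the end of the proof of Theorem~\ref{thm:trivial} (that $(Id_A+t\,ad_x,\,Id_V+t\varrho(x))$ really satisfies \eqref{defi:isocon1}--\eqref{defi:isocon3} for all $t$); your own proof contains no independent verification of this, so any subtlety hidden there is inherited. In particular, the $t^1$-identity $\varrho(\mathcal T(a))(b)=N(a)\cdot b+a\cdot N(b)-N(a\cdot b)$ you use is exactly the relation $\varrho([T(a),x])=[\varrho(T(a)),\varrho(x)]$, the same step the paper uses tacitly in its direct proof---so both arguments rest on the same underlying ingredient.
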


\begin{proof}
For the proof, we just need to check, by
\eqref{eq:Nij2}, for all $a,b\in V$,
    \begin{align*}
     & \varrho(x)(\varrho(x)(a)\cdot b+a\cdot \varrho(x)(b)-\varrho(x)(a\cdot b))-\varrho(x)(a)\cdot\varrho(x)(b)\\
     &\quad =\varrho(x)\Big(\varrho(T\varrho(x)(a))(b)+\varrho(T(a))\varrho(x)(b)-\varrho(x)\varrho(T(a))(b)\Big)-\varrho(T\varrho(x)(a))\varrho(x)(b)\\
     &\quad =[\varrho(x),\varrho(T\varrho(x)(a))]+[\varrho(x),\varrho([T(a),x])](b)\\
     &\quad =\varrho([x,T\varrho(x)(a)+[T(a),x]])(b)
     =0. \qedhere
    \end{align*}
     \end{proof}


\end{document}